\newcommand{\Id}{\ensuremath{{\mathrm{Id}}}}
\newcommand{\ww}{\ensuremath{{\bm{w}}}}
\newcommand{\uu}{\ensuremath{{\bm{u}}}}
\newcommand{\RB}{\ensuremath{\mathcal{R}}}
\newcommand{\OB}{\ensuremath{\mathcal{O}}}
\newcommand{\BB}{\ensuremath{\mathcal{B}}}
\newcommand{\XB}{\ensuremath{\mathcal{X}}}
\newcommand{\YB}{\ensuremath{\mathcal{Y}}}
\newcommand{\UB}{\ensuremath{\mathcal{U}}}
\newcommand{\VB}{\ensuremath{\mathcal{V}}}
\newcommand{\TB}{\ensuremath{\mathcal{T}}}
\newcommand{\HH}{\ensuremath{\bm{H}}}
\newcommand{\Ind}{\ensuremath{\mathbf{1}}}
\newcommand{\udf}{\ensuremath{\bm{\varphi}}}
\newcommand{\vv}{\ensuremath{\bm{v}}}
\newcommand{\xx}{\ensuremath{\bm{x}}}
\newcommand{\yy}{\ensuremath{\bm{y}}}
\newcommand{\zz}{\ensuremath{\bm{z}}}
\newcommand{\EB}{\ensuremath{\mathcal{E}}}
\newcommand{\ee}{\ensuremath{\bm{e}}}
\newcommand{\Sym}{\ensuremath{\mathbb{S}}}
\newcommand{\EE}{\ensuremath{\mathbb{E}}}
\newcommand{\FF}{\ensuremath{\mathbb{F}}}
\newcommand{\ZZ}{\ensuremath{\mathbb{Z}}}
\newcommand{\XX}{\ensuremath{\mathbb{X}}}
\newcommand{\YY}{\ensuremath{\mathbb{Y}}}
\newcommand{\TT}{\ensuremath{\mathbb{T}}}
\newcommand{\UU}{\ensuremath{\mathbb{U}}}
\newcommand{\unc}{\ensuremath{\bm{k}}}
\newcommand{\aunc}{\ensuremath{\bm{\tilde{k}}}}
\newcommand{\dom}{\ensuremath{\bm{\delta}}}
\newcommand{\adom}{\ensuremath{\bm{\tilde{\delta}}}}
\newcommand{\supp}{\mathop{\mathrm{supp}}\nolimits}
\newcommand{\sgn}{\mathop{\mathrm{sign}}\nolimits}
\newcommand{\mm}{\ensuremath{{\bm{m}}}}
\newcommand{\NN}{\ensuremath{\mathbb{N}}}
\newcommand{\CC}{\ensuremath{\mathbb{C}}}
\newcommand{\RR}{\ensuremath{\mathbb{R}}}
\let\abs=\envert
\let\norm=\enVert
\def\MR#1{}
\newtheorem{theorem}{Theorem}[section]
\newtheorem{proposition}[theorem]{Proposition}
\newtheorem{corollary}[theorem]{Corollary}
\newtheorem{lemma}[theorem]{Lemma}
\newtheorem{question}[theorem]{Question}
\theoremstyle{definition}
\theoremstyle{remark}
\numberwithin{equation}{section}
\begin{document}

\title[Fourier coefficients in power-weighted $L_2$-spaces]{Fourier coefficients of functions in power-weighted $L_2$-spaces and conditionality constants of bases in Banach spaces}

\thanks{The author acknowledges the support of the Spanish Ministry for Science, Innovation, and Universities under Grant PGC2018-095366-B-I00 for \emph{An\'alisis Vectorial, Multilineal y Aproximaci\'on.}}

\author[J. L. Ansorena]{J. L. Ansorena}\address{Department of Mathematics and Computer Sciences\\
Universidad de La Rioja\\
Logro\~no 26004\\
Spain} \email{joseluis.ansorena@unirioja.es}

\keywords{Fourier coefficients, Fourier series, conditionality constants, Hilbert spaces, $\ell_p$-spaces, almost greedy bases}

\subjclass[2010]{42A16,46B15,41A65}

\begin{abstract}
We prove that, given $2<p<\infty$, the Fourier coefficients of functions in $L_2(\TT, \abs{t}^{1-2/p}\, dt)$ belong to $\ell_p$, and that, given $1<p<2$, the Fourier series of  sequences in $\ell_p$ belong $L_2(\TT, \abs{t}^{2/p-1}\, dt)$. Then, we apply these results to the study of conditional Schauder bases and conditional almost greedy bases in Banach spaces. Specifically, we prove that, for every $1<p<\infty$ and every $0\le \alpha<1$, there is a Schauder basis of $\ell_p$ whose conditionality constants grow as $(m^\alpha)_{m=1}^\infty$,  and there is an almost greedy basis of $\ell_p$ whose conditionality constants grow as $((\log m)^\alpha)_{m=2}^\infty$.
\end{abstract}

\maketitle

\section{Introduction}\noindent
To contextualize the study carried out in this paper, we bring up a classical theorem that applies, in particular, to Hilbert spaces.

\begin{theorem}[\cite{GurGur1971}*{Theorems 1 and 2} and \cite{James1972}*{Theorems 2 and 3}]\label{thm:EJ}
Let $\XB=(\xx_n)_{n=1}^\infty$ be a Schauder basis of superreflexive Banach space $\XX$. Suppose that $\XB$ is semi-normalized, i.e.,
\[
\inf_n\norm{\xx_n}>0, \quad \sup_n \norm{\xx_n}<\infty.
\]
Then, there are $1<q\le r<\infty$ such that $\XB$ is $r$-Besselian and $q$-Hilbertian. That is, if $\XB^*=(\xx_n^*)_{n=1}^\infty$ is the dual Schauder basis of $\XB$, the coefficient transform
\[
f\mapsto (\xx_n^*(f))_{n=1}^\infty
\]
is a bounded operator from $\XX$ into $\ell_r$, and the series transform
\[
(a_n)_{n=1}^\infty\mapsto \sum_{n=1}^\infty a_n\, \xx_n
\]
is a bounded operator from $\ell_q$ into $\XX$.
\end{theorem}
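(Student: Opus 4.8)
The plan is to prove the two assertions—$r$-Besselian and $q$-Hilbertian—by reducing both to a single statement about uniformly convex spaces and then establishing that statement from the geometry of a renormed space read along the partial sums of the basis. I would begin with two clean reductions. First, since $\XX$ is superreflexive so is $\XX^*$, and by the Enflo--Pisier renorming theorem each of them carries an equivalent norm whose modulus of convexity is of power type $s$ for some $s\in[2,\infty)$; as the boundedness of the transforms in the statement is unaffected by passing to an equivalent norm, and both semi-normalization and the basis constant are preserved up to multiplicative constants, I may assume that $\XX$ and $\XX^*$ are uniformly convex of power type. Second, because $\XX$ is reflexive the basis $\XB$ is shrinking and boundedly complete, so $\XB^*=(\xx_n^*)_{n=1}^\infty$ is itself a semi-normalized Schauder basis of $\XX^*$ whose biorthogonal system is $\XB$. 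The Besselian and Hilbertian properties are then dual: if the coefficient transform of $\XB^*$ maps $\XX^*$ boundedly into $\ell_\rho$, then Hölder's inequality gives $\norm{\sum_n a_n\xx_n}=\sup_{\norm{g}\le1}\abs{\sum_n a_n\, g(\xx_n)}\le C\,\norm{(a_n)_n}_{\rho'}$, so $\XB$ is $\rho'$-Hilbertian. Hence it suffices to prove the following core claim, applied once to $\XX$ and once to $\XX^*$: \emph{every semi-normalized Schauder basis of a uniformly convex space of power type $s$ is $r$-Besselian for some finite $r$.} Monotonicity of the two properties in the exponent then lets me enlarge $r$ and shrink $q$ so that $1<q\le r<\infty$.

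For the core claim, write $f=\sum_n a_n\xx_n$ with $a_n=\xx_n^*(f)$, normalize $\norm{f}=1$, and let $K$ be the basis constant, so that every partial sum $S_m=\sum_{n\le m}a_n\xx_n$ satisfies $\norm{S_m}\le K$. The standard bound on the biorthogonal functionals already yields $\abs{a_n}\le 2K/\delta$ with $\delta=\inf_n\norm{\xx_n}>0$; that is, the coefficient transform maps $\XX$ into $\ell_\infty$, and the whole point is to upgrade this to a finite exponent via a weak-type estimate on the number of large coefficients. The quantitative power-type convexity inequality asserts that whenever $\norm{x},\norm{y}\le K$ one has $\norm{(x+y)/2}\le K-c\,\norm{x-y}^s$ for a constant $c>0$. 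Applying it to the consecutive partial sums $x=S_n$ and $y=S_{n-1}$, whose difference is the single increment $a_n\xx_n$ of norm at least $\delta\abs{a_n}$, shows that every index with $\abs{a_n}>\lambda$ forces the midpoint $(S_n+S_{n-1})/2$ to dip below $K$ by a definite amount $\eta\gtrsim\lambda^s$.

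The heart of the matter is to convert these local dips into a global bound on $\#\{n:\abs{a_n}>\lambda\}$. Here I would package the partial sums recorded at the locations of the large coefficients into a balanced dyadic tree of successive averages and invoke the finite-tree property characterizing superreflexive spaces: a uniformly convex space of power type $s$ cannot support $\epsilon$-separated trees beyond a height controlled by $c$, $\epsilon$, and $s$, while each level of averaging contributes a fixed decrease in norm, so the path of partial sums cannot sustain more than $\lesssim\lambda^{-r_0}$ separated increments of size $\gtrsim\lambda$ inside the ball of radius $K$. This gives $\#\{n:\abs{a_n}>\lambda\}\lesssim\lambda^{-r_0}$, which, integrated against $\lambda^{r-1}\,d\lambda$ and combined with the uniform bound $\abs{a_n}\le 2K/\delta$, yields $\sum_n\abs{a_n}^r\lesssim 1$ for every $r>r_0$, establishing the core claim.

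I expect this last step to be the main obstacle, and it is exactly where superreflexivity—rather than mere reflexivity or nontrivial type—is essential. Because $\XB$ need not be unconditional, one cannot symmetrize with random signs, so neither the type/cotype inequalities nor a naive telescoping of $\norm{S_m}^s$ is available; indeed a conditional basis is never a Riesz basis, which forces the Besselian exponent to be strictly worse than the convexity power type $s$ and rules out the value $r=s$ that unconditionality would hand us for free. The delicate work is therefore the combinatorial organization of the convexity dips along the linearly ordered, non-symmetric sequence of partial sums, together with the bookkeeping certifying that every constant depends only on $K$, on $\delta$ and $\sup_n\norm{\xx_n}$, and on the power type $s$.
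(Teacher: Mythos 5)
First, a point of comparison: the paper does not prove Theorem~\ref{thm:EJ} at all; it is quoted from \cite{GurGur1971} and \cite{James1972}, so your proposal can only be judged on its own merits, not against an argument in the text.

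Your two reductions are sound: passing to an equivalent norm with power-type modulus of convexity (Pisier) costs nothing, and the duality step is correct, since reflexivity makes $\XB$ shrinking, so $\XB^*$ is a semi-normalized Schauder basis of $\XX^*$ and $\rho$-Besselian-ness of $\XB^*$ dualizes to $\rho'$-Hilbertian-ness of $\XB$. The genuine gap is exactly at the step you flagged, and it is fatal as written. Your dyadic tree of successive averages is, in fact, correctly separated: if $T_1,\dots,T_{2^h}$ are the partial sums recorded at the positions of coefficients of modulus $\ge\lambda$, the difference of any two sibling nodes, at every level, carries some single basis vector $\xx_n$ with its full coefficient $a_n$, $\abs{a_n}\ge\lambda$, so all siblings are $\ge\lambda\delta/(2K)$ apart. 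But iterating the power-type convexity inequality up such a tree (equivalently, invoking the finite-tree property) bounds only the \emph{height}: $h\lesssim 1/\delta(c\lambda)\approx\lambda^{-s}$. This yields $\#\{n:\abs{a_n}>\lambda\}=2^h\le 2^{C\lambda^{-s}}$, which is exponentially weaker than the bound $\lesssim\lambda^{-r_0}$ you assert and is insufficient to place the coefficients in $\ell_r$ for \emph{any} finite $r$. Moreover, no refinement of this route can work: already in Hilbert space the unit ball contains $\epsilon$-separated dyadic trees of height $\approx\epsilon^{-2}$ (dyadic martingales with increments of norm $\epsilon$), so in a space of power type $s$ the best possible height bound is polynomial in $1/\epsilon$, never logarithmic, whereas a polynomial bound on the \emph{count} would require a logarithmic bound on the height.

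The chain (non-tree) variant fails for a related reason, which you half-identify: with basis constant $K>1$ the monotonicity substitute $\norm{S_{n-1}}\le K\norm{(S_{n-1}+S_n)/2}$ loses a factor $K$ at every step, and these losses compound, so the convexity dips cannot be accumulated additively along the sequence of partial sums. This is precisely the obstruction that Gurari\u{\i}--Gurari\u{\i} (uniformly convex case) and James (superreflexive case, via a direct combinatorial argument based on his finite-representability criterion) overcome, with arguments that are genuinely different from a tree-height estimate; it is not bookkeeping. A small secondary correction: your closing claim that conditionality forces the Besselian exponent to be strictly worse than the power type $s$ is false --- by Lemma~\ref{lem:easyhalf} of this very paper, the trigonometric system in $\HH_{-\alpha}$ is a conditional basis of a Hilbert space ($s=2$) that \emph{is} $2$-Besselian; what conditionality rules out is being simultaneously $2$-Besselian and $2$-Hilbertian. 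In sum, the proposal is a reasonable reduction of the theorem to its central quantitative lemma, but that lemma is asserted rather than proved, and the mechanism offered for it cannot deliver it.
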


Theorem~\ref{thm:EJ} leads naturally to pose the following general problem.

\begin{question}\label{question:BH}
Let $\XB$ be a semi-normalized Schauder basis of a superreflexive Banach space $\XX$. The non-trivial intervals
\begin{align*}
J_B[\XB,\XX]&=\{r\in[1,\infty] \colon \text{$\XB$ is  $r$-Besselian}\}\; \text{and}\\
J_H[\XB,\XX]&=\{q\in[1,\infty] \colon \text{$\XB$ is  $q$-Hilbertian}\},
\end{align*}
called the Besselian and Hilbertian intervals of $\XB$ in $\XX$, respectively, contain valuable information on the geometry of the basis $\XB$. So they are worth studying.
\end{question}

In this paper, we address Question~\ref{question:BH} for the trigonometric system in Hilbert spaces arising from power weights. Given $\lambda\in\RR$, we consider the weight
\[
\ww_\lambda\colon[-1/2,1/2]\to[0,\infty], \quad t\mapsto \abs{t}^{\lambda}.
\]
If $\lambda>-1$ and $n\in\ZZ$, then the trigonometric function
\[
\tau_n\colon[-1/2,1/2]\to\CC, \quad \tau_n(t)= e^{2\pi i n t },
\]
belongs to the complex Hilbert space
\[
\HH_\lambda=L_2(\ww_\lambda,\CC).
\]
Moreover, the norm of $\tau_n$ does not depend on $n$. The natural arrangement of the trigonometric system $(\tau_n)_{n=-\infty}^\infty$ will be denoted by $\TB$, that is, $\TB=(\phi_n)_{n=0}^\infty$, where $\phi_{2n}=\tau_{-n}$ and  $\phi_{2n+1}=\tau_{n+1}$ for all $n\in\NN\cup\{0\}$. As we are also interested in Hilbert spaces over $\RR$,  we consider the real-valued counterpart of $\TB$. We define $\TB^{\,\RR}=(\phi_n^\RR)_{n=0}^\infty$ by $\phi_0^\RR=1$ and
\[
\phi^\RR_{2k-1}(t)=\cos(2\pi k t), \quad \phi^\RR_{2k}(t)=\sin(2\pi k t), \quad t\in[-1/2,1/2], \; k\in\NN.
\]
Let us record the obvious relations between $\TB$ and $\TB^{\,\RR}$. We have
\begin{align}
\phi^\RR_{2k-1}&=\frac{\phi_{2k-1}+\phi_{2k}}{2},\quad \phi^\RR_{2k}=\frac{\phi_{2k-1}-\phi_{2k}}{2i}, \quad k\in\NN, \;\mbox{ and}
\label{eq:cossin}\\
\phi_{2k-1}&=\phi^\RR_{2k-1}+i\phi^\RR_{2k}, \quad \phi_{2k}=\phi^\RR_{2k-1}-i\phi^\RR_{2k}, \quad k\in\NN.
\label{eq:exp}
\end{align}

In the case when $-1<\lambda<1$, the conjugate-function operator
\[
f=\sum_{n=-\infty}^{\infty} a_n \tau_n \mapsto \widetilde{f}= \sum_{n=-\infty}^{\infty} \sgn(n) a_n \tau_n
\]
is bounded on $\HH_\lambda$  \cite{Babenko1948} and, hence, $\TB$ is a Schauder basis of  $\HH_\lambda$. Of course, this result can be derived from the fact that $\ww_\lambda$ is a Muckenhoupt $A_2$ weight (see \cite{HMW1973}*{Theorem 8}). Taking into consideration  \eqref{eq:cossin}, we infer from \cite{AAW2019}*{Lemma 2.4} that $\TB^{\,\RR}$ is a semi-normalized basis of  both $\HH_\lambda$ and  its real-valued counterpart
\[
\HH^\RR_\lambda=L_2(\ww_\lambda,\RR).
\]

Here, we contribute to the understanding of the trigonometric system by computing the Besselian and Hilbertian intervals of $\TB$ and $\TB^{\,\RR}$ regarded as systems in $\HH_\lambda$ and $\HH^\RR_\lambda$, respectively. Namely, we will prove the following result.

\begin{theorem}\label{thm:Main}
Let $0\le \alpha<1$. Define $1<q_\alpha\le 2 \le r_\alpha<\infty$ by
\[
q_\alpha=\frac{2}{1+\alpha}, \quad r_\alpha=\frac{2}{1-\alpha}.
\]
Then,
\begin{enumerate}[label=(\roman*), leftmargin=*, widest=iii]
\item $J_B[\TB,\HH_{-\alpha}]=J_B[\TB^{\,\RR},\HH_{-\alpha}]=J_B[\TB^{\,\RR},\HH^\RR_{-\alpha}]=[2,\infty]$,
\item $J_H[\TB,\HH_{-\alpha}]=J_H[\TB^{\,\RR},\HH_{-\alpha}]=J_H[\TB^{\,\RR},\HH^\RR_{-\alpha}]=[1,q_\alpha]$,
\item $J_B[\TB,\HH_\alpha]=J_B[\TB^{\,\RR},\HH_\alpha]=J_B[\TB^{\,\RR},\HH^\RR_\alpha]=[r_\alpha,\infty]$, and
\item $J_H[\TB, \HH_\alpha]=J_H[\TB^{\,\RR}, \HH_\alpha]=J_H[\TB^{\,\RR}, \HH^\RR_\alpha]=[1,2]$.
\end{enumerate}
\end{theorem}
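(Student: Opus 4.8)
The plan is to reduce the whole statement to a single analytic inequality for the complex system $\TB$ in the positively-weighted space $\HH_\alpha$, and then to recover everything else by duality, monotonicity, and complexification. Two soft reductions come first. Since $\ell_q\hookrightarrow\ell_{q'}$ for $q\le q'$, each of the eight intervals is determined by one endpoint, so it suffices to pin down the critical exponents. Next, because $\HH_\lambda\subset L_1$ for $\lambda<1$, the coefficient functionals of $\TB$ in $\HH_\lambda$ are the ordinary Fourier coefficients $f\mapsto(\widehat f(n))_n$ with $\widehat f(n)=\int f\,\overline{\tau_n}$; since $\int\tau_m\,\overline{\tau_n}=\delta_{mn}$, the unweighted pairing identifies $\HH_\lambda^*$ with $\HH_{-\lambda}$ and the dual basis of $\TB$ with $\TB$ itself. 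Consequently $\TB$ is $r$-Besselian in $\HH_\lambda$ if and only if $\TB$ is $r'$-Hilbertian in $\HH_{-\lambda}$, and symmetrically. As $r_\alpha'=q_\alpha$ and $2'=2$, this makes (ii) dual to (iii) and (i) dual to (iv), so it is enough to prove (iii) and (iv) for $\TB$ in $\HH_\alpha$.

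The heart of the matter is the upper Besselian bound of (iii): the coefficient map $\HH_\alpha\to\ell_{r_\alpha}$ is bounded. I would prove this by a convolution factorization. Writing $g=f\,\abs{t}^{\alpha/2}$, membership $f\in\HH_\alpha$ is equivalent to $g\in L_2$ with $\norm{g}_2=\norm{f}_{\HH_\alpha}$, and $\widehat f=\widehat g * k$, where $k=\bigl(\widehat{\abs{t}^{-\alpha/2}}(n)\bigr)_n$. A standard estimate for Fourier coefficients of power functions gives $\abs{k(n)}\lesssim\abs{n}^{\alpha/2-1}$, so $k\in\ell_{s,\infty}$ with $s=2/(2-\alpha)$. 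The weak-type Young (Young--O'Neil) inequality $\ell_2 * \ell_{s,\infty}\to\ell_{r_\alpha}$, whose exponents obey $1/r_\alpha=1/2+1/s-1=(1-\alpha)/2$ with all three indices in $(1,\infty)$, then yields $\norm{\widehat f}_{\ell_{r_\alpha}}\lesssim\norm{g}_2=\norm{f}_{\HH_\alpha}$. The easy half of (iv), that $\TB$ is $2$-Hilbertian in $\HH_\alpha$, follows at once from $\abs{t}^\alpha\le 2^{-\alpha}$ and Parseval: $\norm{\sum a_n\tau_n}_{\HH_\alpha}\le 2^{-\alpha/2}\norm{(a_n)}_{\ell_2}$.

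It remains to prove sharpness. For (iii) I would exhibit a single $f\in\HH_\alpha$ whose coefficients escape every $\ell_r$ with $r<r_\alpha$; the natural candidate is $f(t)=\abs{t}^{-(1+\alpha)/2}\bigl(\log(e/\abs{t})\bigr)^{-1}$, for which $\int\abs{f}^2\abs{t}^\alpha\,dt\asymp\int\abs{t}^{-1}\bigl(\log(e/\abs{t})\bigr)^{-2}\,dt<\infty$, while $\widehat f(n)\asymp\abs{n}^{-(1-\alpha)/2}(\log\abs{n})^{-1}$, so that $\sum_n\abs{\widehat f(n)}^r$ diverges exactly when $r(1-\alpha)/2\le 1$. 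For the sharpness of (iv) it is cleanest to use duality: failing to be $q$-Hilbertian in $\HH_\alpha$ for $q>2$ is equivalent to $\TB$ failing to be $s$-Besselian in $\HH_{-\alpha}$ for $s<2$, and here I would take a function with an integrable singularity \emph{away} from the origin, say $f(t)\asymp\abs{t-1/4}^{-1/2}\bigl(\log(e/\abs{t-1/4})\bigr)^{-1}$ near $1/4$ and smooth elsewhere. Such $f$ lies in $\HH_{-\alpha}$ because it is bounded near $0$ and $\int_0\abs{t}^{-\alpha}\,dt<\infty$, yet its coefficients decay like $\abs{n}^{-1/2}(\log\abs{n})^{-1}$, so that $\widehat f\in\ell_2\setminus\ell_s$ for every $s<2$. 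The genuine difficulty in both constructions is the endpoint bookkeeping: the critical examples sit exactly on the boundary, so the logarithmic factors must be tuned to land $f$ just inside the space while its coefficients land just outside $\ell_r$, and the two-sided coefficient asymptotics—sharper than the upper bounds used for the positive results—must be justified.

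Finally I would assemble the four intervals. Duality promotes (iii) and (iv) in $\HH_\alpha$ to (ii) and (i) in $\HH_{-\alpha}$. The passage from $\TB$ to $\TB^{\,\RR}$ within a fixed space is handled by \eqref{eq:cossin} and \eqref{eq:exp}: these identities realize the change of system as a block-diagonal map with constant, hence uniformly bounded, $2\times2$ blocks, which acts boundedly and invertibly on every $\ell_p$; thus $\TB$ and $\TB^{\,\RR}$ have identical Besselian and Hilbertian intervals in $\HH_\lambda$, giving the second equality in each item. The third equality, relating $\HH_\lambda$ to its real counterpart $\HH^\RR_\lambda$ through the real system $\TB^{\,\RR}$, is a routine complexification: a real basis has equivalent coefficient- and series-transform norms over the real and complex spaces, which together with \cite{AAW2019}*{Lemma 2.4} yields $J_B[\TB^{\,\RR},\HH_\lambda]=J_B[\TB^{\,\RR},\HH^\RR_\lambda]$ and likewise for $J_H$. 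I expect the main obstacle to be the core inequality of (iii) together with its sharpness—extracting the exact exponent $r_\alpha$ from the weak-type convolution estimate and matching it with a boundary example—whereas the duality, monotonicity, and complexification steps are comparatively formal.
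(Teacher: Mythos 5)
Your proposal is correct in its architecture and reaches the theorem by a genuinely different route from the paper's. The soft frame coincides: the paper also reduces everything to $\TB$, dualizes via $\HH_\lambda^*=\HH_{-\lambda}$ under the unweighted pairing, and passes to $\TB^{\,\RR}$ and $\HH^\RR_\lambda$ through the block identities \eqref{eq:cossin}--\eqref{eq:exp} and complexification (packaged as Lemma~\ref{lem:gathered}). The differences are in the two analytic cores. For the positive result, the paper does not factor out the weight: it bounds the fundamental function, $\udf_m[\TB,\HH_{-\beta}]\lesssim m^{1/q_\beta}$, directly from the coefficient estimate \eqref{eq:GWEstimate}, invokes \cite{AABW2021}*{Corollary 9.13} to convert this into boundedness of the series transform on $\ell_{q_\beta,1}$, and then real-interpolates with the unweighted $L_2$ case to land on $\ell_{q_\alpha,2}\to\HH_{-\alpha}$ (Theorem~\ref{thm:FE}); note it needs an auxiliary exponent $\beta\in(\alpha,1)$ and only reaches the target $\alpha$ after interpolation. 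Your route --- $\widehat f=\widehat g\ast k$ with $g=f\,\abs{t}^{\alpha/2}\in L_2$, $k=\bigl(\widehat{\ww_{-\alpha/2}}(n)\bigr)_n\in\ell_{s,\infty}$, $s=2/(2-\alpha)$, followed by O'Neil's weak Young inequality --- is more self-contained, works at the target exponent in one step, and, if you keep the Lorentz form of O'Neil's inequality ($\ell_{2}\ast\ell_{s,\infty}\to\ell_{r_\alpha,2}$), recovers exactly the refined statement of Theorem~\ref{thm:FE}. For the negative results the paper is substantially lighter than you: instead of single functions with logarithmically tuned singularities, it tests the putative bounds against the family of Dirichlet kernels, whose norms $\norm{D_m}_{\HH_\lambda}\approx m^{(1-\lambda)/2}$ are quoted from \cite{GW2014} (Lemmas~\ref{lem:Dirichlet} and~\ref{lem:DirLB}), and it gets the inclusions $J_B[\TB,\HH_{-\alpha}]\subseteq[2,\infty]$ and $J_H[\TB,\HH_\alpha]\subseteq[1,2]$ for free from a soft block-basic-sequence argument valid for any semi-normalized $M$-bounded system in a Hilbert space (Proposition~\ref{lem:latticeEst}). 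Your explicit examples do work, but they hinge on two-sided asymptotics $\abs{\widehat f(n)}\approx\abs{n}^{-(1-\alpha)/2}(\log\abs{n})^{-1}$ for functions with regularly varying singularities --- the step you yourself flag as the genuine difficulty; this is classical (Abelian theorems for convex or quasi-monotone singular functions, Zygmund Ch.~V) but it is precisely the work the Dirichlet-kernel test avoids.

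One slip to fix: your claim that $\sum_n\abs{\widehat f(n)}^r$ ``diverges exactly when $r(1-\alpha)/2\le 1$'' should read $r(1-\alpha)/2<1$. At the endpoint $r=r_\alpha$ the factor $(\log n)^{-r_\alpha}$ with $r_\alpha>1$ makes $\sum_n n^{-1}(\log n)^{-r_\alpha}$ converge --- as it must, since otherwise your example would contradict your own positive result that the coefficient map sends $\HH_\alpha$ into $\ell_{r_\alpha}$. The conclusion you actually use (escape from $\ell_r$ for every $r<r_\alpha$) is unaffected.
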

Notice that, since the trigonometric system is an orthogonal basis of $L_2([-1/2,1/2])$, the case $\alpha=0$ in Theorem~\ref{thm:Main} is just a consequence of combining Bessel's inequality with Riesz-Fischer Theorem.

We complement our research by applying Theorem~\ref{thm:Main} to the study of conditional bases in $\ell_p$-spaces. Since every Banach space with a Schauder basis has a conditional Schauder basis by a classical theorem of Pełczy\'{n}ski and Singer \cite{PelSin1964}, to obtain information on the structure of a given space by means of its conditional bases we must study certain additional features of the bases. In this regard, focusing on the conditionality parameters of the bases is an inviting line of research. Let us abridge the necessary terminology and background on this topic.

Given a Schauder basis $\XB=(\xx_n)_{n=1}^\infty$ of a Banach space $\XX$ with dual basis $(\xx_n^*)_{n=1}^\infty$, we put
\[
\unc_m=\unc_m[\XB,\XX]=\sup_{\abs{A}\le m} \norm{S_A}, \quad m\in\NN,
\]
where $S_A=S_A[\XB,\XX]\colon\XX\to\XX$ is the  the coordinate projection on the finite subset $A\subseteq\NN$, i.e.,
\[
S_A(f) = \sum_{n\in A} \xx_n^*(f) \, \xx_n, \quad f\in\XX.
\]
Since $\XB$ is unconditional if and only if $\sup_m \unc_m<\infty$, the growth of the sequence $(\unc_m)_{m=1}^\infty$
measures how far $\XB$ is from being unconditional.

An application of Theorem~\ref{thm:EJ} gives that if $\XX$ is superreflexive, then there is $0\le\alpha<1$ such that
\begin{equation}\label{eq:UCSR}
\unc_m[\XB,\XX]\lesssim m^{\alpha}, \quad m\in\NN.
\end{equation}
(see Theorem~\ref{lem:BHCC} and Equation~\eqref{eq:Comlplq} below). In fact, this property characterizes superreflexivity (see \cite{AAW2019}*{Corollary 3.6}). The authors of \cite{GW2014} proved that the estimate \eqref{eq:UCSR} is optimal for Hilbert spaces and, more generally, $\ell_p$ spaces for $1<p<\infty$. To be precise, Garrig\'os and Wojtaszczyk proved that for every $0\le \alpha<1$ there is a Schauder basis $\XB_\alpha$ of $\ell_p$ with
\[
\unc_m[\XB_\alpha,\ell_p]\gtrsim  m^{\alpha},  \quad m\in\NN.
\]
However, they did not compute the exact growth of the sequence $(\unc_m)_{m=1}^\infty$. We will take advantage of Theorem~\ref{thm:Main} to complement their study. Namely, we will prove the following.
\begin{theorem}\label{thm:CPlp}
For every $0\le \alpha <1$ and $1<p<\infty$ there is a Schuader basis $\XB_\alpha$ of $\ell_p$ with
\[
\unc_m[\XB_\alpha,\ell_p]\approx  m^{\alpha},  \quad m\in\NN.
\]
\end{theorem}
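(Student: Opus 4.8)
The plan is to realize the basis $\XB_\alpha$ by transporting the trigonometric system from one of the weighted Hilbert spaces $\HH_{-\alpha}$ or $\HH_\alpha$ (where we already know the exact conditionality behavior via Theorem~\ref{thm:Main}) into $\ell_p$, while controlling the conditionality constants throughout. The starting observation is that the conditionality constants of a basis are governed by the Besselian and Hilbertian exponents: I expect a lemma (the one flagged in the excerpt as Theorem~\ref{lem:BHCC}) asserting that if $\XB$ is $r$-Besselian and $q$-Hilbertian with $1/q-1/r = \alpha$, then $\unc_m[\XB,\XX]\lesssim m^{\alpha}$, and conversely that the Besselian/Hilbertian intervals constrain the growth from below. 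For the trigonometric system Theorem~\ref{thm:Main} pins down these intervals precisely, with the gap $1/q_\alpha - 1/r_\alpha = \alpha$ in the $\HH_{-\alpha}$ case, so the natural guess is that $\unc_m[\TB,\HH_{-\alpha}]\approx m^\alpha$ exactly. The first step, therefore, is to establish that two-sided estimate $\unc_m[\TB,\HH_{-\alpha}]\approx m^\alpha$ for the trigonometric system itself, the lower bound coming from sharpness of the intervals and the upper bound from the interpolation-type lemma.

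Second, I would transfer this example from $\HH_{-\alpha}$ to $\ell_p$. The cleanest route is to use that $\HH_{-\alpha}$ is an $L_2$-space and hence isometric to a Hilbert space $\ell_2$, and then to combine the trigonometric basis with a known basis realizing the target $\ell_p$ geometry. Concretely, one forms a basis of $\ell_p$ by taking a direct-sum or blocking construction: intersperse copies of $\TB$ inside a standard unconditional basis of $\ell_p$, or use the isomorphism between $\ell_p$ and an infinite $\ell_p$-sum of finite-dimensional pieces $(\ell_2^{d_k})$ with $\sum d_k=\infty$, placing a finite piece of the conditional trigonometric example into each block. The point is that conditionality constants of a basis formed by an $\ell_p$-direct sum of finite blocks are the supremum of the block conditionality constants (up to the $\ell_p$-sum constant), so by choosing the block dimensions $d_k$ to grow appropriately one arranges that $\unc_m$ of the assembled basis matches the $m^\alpha$ growth of the trigonometric blocks. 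Here one uses that finite sections of $\TB$ in $\HH_{-\alpha}$ already exhibit conditionality constants of order $m^\alpha$ for $m$ up to the block size.

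The main obstacle I anticipate is the gluing step: ensuring that the conditionality constant of the global basis is genuinely $\approx m^\alpha$ and not merely $\lesssim m^\alpha$, i.e., preserving the lower bound after embedding finite trigonometric blocks into $\ell_p$. For the upper bound one needs a clean quantitative statement that an $\ell_p$-sum of bases whose individual conditionality constants are $\lesssim m^\alpha$ still has constants $\lesssim m^\alpha$; this is where the $\ell_p$-structure (as opposed to a general Banach direct sum) matters, and one must verify that the coordinate projections act blockwise with uniformly bounded interaction. For the lower bound one must exhibit, for each $m$, a coefficient vector and a set $A$ with $|A|\le m$ living essentially in a single block of dimension $\gtrsim m$ that witnesses $\norm{S_A}\gtrsim m^\alpha$; this requires the finite trigonometric sections to realize the $m^\alpha$ lower bound \emph{uniformly} in the block, which should follow from the sharpness half of Theorem~\ref{thm:Main} applied to truncations. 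Assembling these, the two-sided estimate $\unc_m[\XB_\alpha,\ell_p]\approx m^\alpha$ follows; the case $\alpha=0$ is of course just any unconditional basis of $\ell_p$.
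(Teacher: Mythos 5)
Your overall architecture in the second half (build a conditional basis of a Hilbert space with prescribed conditionality growth, then glue finite sections into an $\ell_p$-sum of finite-dimensional blocks and invoke Pe{\l}czy\'nski's decomposition $\left(\bigoplus_n \ell_2^{2^n}\right)_p\approx\ell_p$, plus $\XX\oplus\ell_p\simeq\XX$) is exactly the paper's route (Lemma~\ref{lem:Infinite} and Corollary~\ref{cor:CPlp}). The genuine gap is in your first step: the claim $\unc_m[\TB,\HH_{-\alpha}]\approx m^{\alpha}$ is false, and it rests on a miscomputation of the Besselian/Hilbertian gap. By Theorem~\ref{thm:Main}, in $\HH_{-\alpha}$ the trigonometric system is $q_\alpha$-Hilbertian \emph{and} $2$-Besselian: the Besselian interval there is $[2,\infty]$, not $[r_\alpha,\infty]$; the exponent $r_\alpha$ is the optimal Besselian index in the \emph{other} space $\HH_{\alpha}$. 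Hence Lemma~\ref{lem:BHCC} gives
\[
\unc_m[\TB,\HH_{-\alpha}]\lesssim \dom_m[\ell_{q_\alpha},\ell_2]=m^{1/q_\alpha-1/2}=m^{\alpha/2},
\]
and symmetrically $\unc_m[\TB,\HH_{\alpha}]\lesssim m^{1/2-1/r_\alpha}=m^{\alpha/2}$. So the trigonometric system in any single power-weighted space has conditionality exponent at most $\alpha/2<1/2$, and your construction cannot reach the range $\alpha\in[1/2,1)$ no matter how the gluing is arranged. There is a second, independent problem: even for $\alpha<1/2$, ``sharpness of the intervals'' is not a lower-bound mechanism; knowing that $\TB$ fails to be $q_1$-Hilbertian for $q_1>q_\alpha$ does not by itself produce a set $A$ with $\abs{A}\le m$ and a vector witnessing $\norm{S_A}\gtrsim m^{\alpha}$.

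The missing idea is the paper's rotation trick (Theorem~\ref{thm:AA}). One takes \emph{two} weighted spaces, $\HH_{-\beta}$, where $\TB^{\,\RR}$ is $q_\beta$-Hilbertian, and $\HH_{\alpha}$, where $\TB^{\,\RR}$ is $r_\alpha$-Besselian, and rotates the direct sum: $\XB:=\TB^{\,\RR}\diamond\TB^{\,\RR}$ in the Hilbert space $\HH_{-\beta}\oplus\HH_\alpha$. By Lemma~\ref{lem:gathered} the rotated basis is still $q_\beta$-Hilbertian and $r_\alpha$-Besselian, so Lemma~\ref{lem:BHCC} gives the upper bound $m^{1/q_\beta-1/r_\alpha}=m^{(\alpha+\beta)/2}$, and this exponent sweeps the whole range $[0,1)$ as $\alpha,\beta\to1$. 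Crucially, the lower bound now has a concrete witness via Lemma~\ref{lem:ccdom}: the conditionality constants of the rotated sum dominate (half of) the ratio $\norm{\sum_{n\in A}a_n\,\phi_n^\RR}_{\HH_{-\beta}}\big/\norm{\sum_{n\in A}a_n\,\phi_n^\RR}_{\HH_\alpha}$, and taking for $A$ and $(a_n)$ the coefficients of the Dirichlet kernel $D_m$, Lemma~\ref{lem:Dirichlet} gives $\approx m^{(1+\beta)/2-(1-\alpha)/2}=m^{(\alpha+\beta)/2}$. In other words, the $m^{\alpha}$ growth is produced by the \emph{disparity between the two spaces}, not by the conditionality of $\TB$ inside either one of them. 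With that corrected example of a basis of $\ell_2$ in hand, your blocking and Pe{\l}czy\'nski-decomposition step goes through essentially as you describe.
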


Another way to get  information on the structure of a given space by means of its conditional bases it to restrict the discussion on their existence by imposing certain distinctive properties. These additional properties can be imported to Banach space theory from greedy approximation theory, where we find interesting types of bases, such as almost greedy bases, which are suitable to implement the greedy algorithm and yet they need not be unconditional. In this regard, we point out that superreflexivity can also be characterized in terms of the conditionality parameters of almost greedy bases. In fact, since the GOW-method invented in \cite{GW2014} gives rise to almost greedy bases, we can safely replace `quasi-greedy' with `almost greedy' in  \cite{AAW2019}*{Corollary 3.6}. Thereby, a Banach space $\XX$ is superreflexive if and only if every almost greedy basis $\YB$ of a Banach space $\YY$ finitely representable in $\XX$ satisfies
\begin{equation}\label{eq:SRAG}
\unc_m[\YB,\YY]\lesssim  (\log m)^{\alpha},  \quad m\ge 2,
\end{equation}
for some $\alpha<1$. As well as \eqref{eq:UCSR}, the estimate \eqref{eq:SRAG} is optimal in the sense that for every $0\le \alpha<1$ and every $1<p<\infty$ there is an almost greedy basis $\YB_\alpha$ of $\ell_p$ with
\begin{equation*}
\unc_m[\YB_\alpha,\ell_p]\gtrsim (\log m)^{\alpha},  \quad m\in\NN
\end{equation*}
(see \cite{GW2014}*{Theorem 1.2}). In this paper, we improve this result by computing the growth of the sequence $(\unc_m)_{m=1}^\infty$. To be precise, we will prove the following result.

\begin{theorem}\label{thm:CPAGlp}
Let $1<p<\infty$, and let $\abs{1/2-1/p} \le \alpha <1$. Then, there is an almost greedy basis $\YB_\alpha$ of $\ell_p$ with
\[
\unc_m[\YB_\alpha,\ell_p]\approx  (\log m)^{\alpha},  \quad m\ge 2.
\]
\end{theorem}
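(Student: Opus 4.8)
The plan is to feed the trigonometric system, whose conditionality constants are now pinned down exactly by Theorem~\ref{thm:Main}, into the GOW-method of \cite{GW2014}: this machine amalgamates a sequence of finite-dimensional conditional bases into a single almost greedy basis and, in doing so, compresses polynomial conditionality growth into logarithmic growth. The whole construction will mirror the one behind Theorem~\ref{thm:CPlp}, the difference lying only in the quantitative bookkeeping that converts $m^\gamma$ into $(\log m)^\alpha$. Throughout, the bridge between Besselian/Hilbertian constants and conditionality constants is the estimate recorded in Theorem~\ref{lem:BHCC} and \eqref{eq:Comlplq}.

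First I would fix the finite-dimensional building blocks. Given $p$ and $\alpha$ with $\abs{1/2-1/p}\le\alpha<1$, choose a weight exponent $\lambda=\lambda(\alpha,p)$ with $\sgn\lambda=\sgn(p-2)$ so that the coefficient and series transforms of Theorem~\ref{thm:Main} map boundedly between $\ell_p$ and $\HH_\lambda$. Truncating $\TB$ (or, over the reals, $\TB^{\,\RR}$) to its first $N$ terms yields a normalized basis $\BB_N$ of an $N$-dimensional space. The Besselian and Hilbertian estimates of Theorem~\ref{thm:Main}, available now as two-sided bounds, serve two purposes: they identify this space, up to a uniform isomorphism, with $\ell_p^N$, and they pin down the conditionality $\unc_m[\BB_N]\approx m^\gamma$ for $1\le m\le N$, with a rate $\gamma=\gamma(\alpha,p)$. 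The constraint $\alpha\ge\abs{1/2-1/p}$ is precisely what places $\lambda$ in the admissible range $(-1,1)$, that is, what makes the identification with $\ell_p^N$ possible with a uniform constant.

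Next I would amalgamate the blocks. Realizing $\ell_p\cong\bigl(\bigoplus_k\ell_p^{N_k}\bigr)_{\ell_p}$, I would splice the bases $\BB_{N_k}$ together in the GOW fashion, dovetailing them with an unconditional democratic device so that the resulting basis $\YB_\alpha$ is almost greedy. The block dimensions $N_k$ are chosen to grow geometrically; this is the step that turns polynomial into logarithmic growth, because a coordinate set of cardinality $m$ then interacts with the first $\approx\log m$ blocks, and the quasi-greedy amalgamation makes their conditional defects \emph{accumulate} rather than merely compete as they would in a plain $\ell_p$-sum. Tuning the block parameters so that the $\ell$-th block contributes at the rate $\ell^{\alpha-1}$ produces the partial sum $\sum_{\ell\le\log m}\ell^{\alpha-1}\approx(\log m)^\alpha$.

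The final and hardest step is the two-sided estimate of $\unc_m[\YB_\alpha,\ell_p]$. The upper bound $\lesssim(\log m)^\alpha$ follows from the quasi-greedy structure together with the per-block bounds of the first step. The matching \emph{lower} bound $\gtrsim(\log m)^\alpha$ is where the exactness of Theorem~\ref{thm:Main} becomes indispensable: I would test coordinate projections against vectors spread across the first $\approx\log m$ blocks and invoke not merely that $\TB$ realizes the relevant Besselian and Hilbertian exponents, but that these exponents are optimal, so that each block contributes its full share $\ell^{\alpha-1}$ from below. Making these per-block contributions accumulate to \emph{exactly} $(\log m)^\alpha$, rather than $O((\log m)^\alpha)$, uniformly over the whole admissible range $\abs{1/2-1/p}\le\alpha<1$, is the delicate point; it is also where I expect the restriction $\alpha\ge\abs{1/2-1/p}$ to be genuinely needed, since below it the polynomial block rate $\gamma$ can no longer be transferred cleanly to the $\ell_p$ norm.
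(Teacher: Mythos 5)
Your overall skeleton---exact conditionality for the trigonometric system, amalgamation of finite-dimensional pieces into $\ell_p$, then a greedy-type construction compressing $m^\gamma$ into $(\log m)^\gamma$---does match the paper's architecture (Theorem~\ref{thm:AA} $\to$ Corollary~\ref{cor:CPlpBis} $\to$ Lemma~\ref{lem:BB}, the last via the DKK-method rather than the GOW-method you invoke). But two of your key steps are wrong or unsupported. First, the span of the first $N$ elements of $\TB$ in $\HH_\lambda$ is an $N$-dimensional \emph{Hilbert} space, hence uniformly isomorphic to $\ell_2^N$ and never uniformly isomorphic to $\ell_p^N$ for $p\neq 2$ (the Banach--Mazur distance between $\ell_2^N$ and $\ell_p^N$ tends to infinity); being $q$-Hilbertian and $r$-Besselian with $q<r$ identifies the space with nothing. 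The paper keeps the blocks as $\ell_2^{2^n}$ and lands in $\ell_p$ via Pe{\l}czy\'{n}ski's theorem $\left(\bigoplus_n \ell_2^{2^n}\right)_p\simeq\ell_p$. Correspondingly, the restriction $\abs{1/2-1/p}\le\alpha$ has nothing to do with placing $\lambda$ in $(-1,1)$: it arises because the building blocks live in Hilbert space, forcing $q\le 2\le r$, while the $\ell_p$-structure forces $q\le p\le r$ (Proposition~\ref{lem:latticeEst}, and the upper $q$-/lower $r$-estimates that $\Sym=\ell_p$ must satisfy in Lemma~\ref{lem:BB}); hence $1/q-1/r\ge\abs{1/2-1/p}$.

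Second, and more seriously, you dispose of the upper bound $\unc_m\lesssim(\log m)^\alpha$ by saying it ``follows from the quasi-greedy structure together with the per-block bounds.'' That is precisely the part for which the paper had to build new machinery---and why \cite{GW2014} obtained only lower bounds. Almost greediness in $\ell_p$ automatically gives $\unc_m\lesssim(\log m)^{\alpha'}$ for \emph{some} $\alpha'<1$, but nothing ties that exponent to your target $\alpha$; to pin it down the paper proves that the DKK basis is $d_{1,q}(\ww)$-Hilbertian and $d_{1,r}(\ww)$-Besselian (Propositions~\ref{prop:DKKHilbertian} and \ref{prop:DKKBesselian}, which in turn need the dual description of DKK spaces in Proposition~\ref{eq:DualDKK}), and then applies Lemma~\ref{lem:BHCC} with the weighted Lorentz computation \eqref{eq:domWLorentz} and Lemma~\ref{lem:equivalentnorm}. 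Your emphasis on the lower bound is also backwards: in the paper the lower bound is the cheap half, coming from Dirichlet-kernel norms (Lemma~\ref{lem:Dirichlet}) through the rotation trick (Lemma~\ref{lem:ccdom}) and \cite{AADK2018}, and the optimality assertions of Theorem~\ref{thm:Main} play no role there. Finally, the ``accumulation'' mechanism you describe---each block contributing $\ell^{\alpha-1}$ and the defects summing---is not how these constructions work: a set of $m$ coordinates is block-averaged onto roughly $\log m$ coefficients of a \emph{single} conditional basis, so the conditionality on such sets is $\aunc_{\log m}$ of that basis, not a sum of independent per-block contributions. As written, the proposal has genuine gaps at both the block-identification step and the upper estimate.
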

Note that Theorem~\ref{thm:CPAGlp} is, in a sense, the almost greedy counterpart of Theorem~\ref{thm:CPlp}.

The article is structured in three more sections. In Section~\ref{sect:preliminary}, we record some general results on Hilbertian an Besselian intervals, as well as some general results on conditionality parameters. Section~\ref{sect:Main} revolves around the proof of Theorem~\ref{thm:Main}. Section~\ref{sect:CPSchauder} is devoted to prove Theorem~\ref{thm:CPlp}. In turn, Section~\ref{sect:CPAG} is geared toward the proof of Theorem~\ref{thm:CPAGlp}.

Throughout this paper, we employ standard notation and terminology commonly used in Fourier Analysis, Functional Analysis and Approximation Theory, as the reader will find, e.g., in the monographs \cites{Duo2001,AlbiacKalton2016,LinTza1979}. Other more specific terminology will be introduced in context when needed.

\section{Preliminary results}\label{sect:preliminary}\noindent
For broader applicability, we will consider Question~\ref{question:BH} within a setting more general than that of Schauder bases. A \emph{biorthogonal system} in a Banach space $\XX$ over the real or complex field $\FF$ is a sequence $\OB=(\xx_n,\xx_n^*)_{n=1}^\infty$ in $\XX\times\XX^*$ with $\xx_n^*(\xx_k)=\delta_{n,k}$ for all $(n,k)\in\NN^2$. The Besselian and Hilbertian intervals of the biorthogonal system $\OB$ are defined analogously to those of a Schauder basis.
The interval $J_B[\OB,\XX]$ is nonempty, that is, $\infty\in J_B[\OB,\XX]$, if and only if $(\xx_n^*)_{n=1}^\infty$ is norm-bounded. Similarly, $1$ belongs to $J_B[\OB,\XX]$, so that  $J_B[\OB,\XX]\not=\emptyset$, if and only if $(\xx_n)_{n=1}^\infty$ is norm-bounded. Notice that  $(\xx_n)_{n=1}^\infty$ and  $(\xx_n^*)_{n=1}^\infty$ are simultaneously norm-bounded if and only if  $(\xx_n)_{n=1}^\infty$ is semi-normalized and
\begin{equation}\label{eq:MB}
\sup_n \norm{\xx_n} \norm{\xx_n^*}<\infty.
\end{equation}
If  \eqref{eq:MB} holds, we say that the biorthogonal system $\OB$ is \emph{$M$-bounded}.

If $\XB=(\xx_n)_{n=1}^\infty$ is a Schauder basis with dual basis $(\xx_n^*)_{n=1}^\infty$ then, since the partial sum projections with respect to $\XB$ are  uniformly bounded, $(\xx_n,\xx_n^*)_{n=1}^\infty$ is an \emph{$M$-bounded} biorthogonal system.

Given a biorthogonal system $\OB=(\xx_n,\xx_n^*)_{n=1}^\infty$, the set
\[
J[\OB,\XX]:=J_H[\OB,\XX]\cap J_B[\OB,\XX]
\]
is either empty or a singleton. Moreover, if $J[\OB,\XX]=\{r\}$, then $(\xx_n)_{n=1}^\infty$ is equivalent to the unit vector system of $\ell_r$. The existence of an unconditional basis for $\XX$ enables us to obtain more significant information.

Given a set $A\subseteq[1,\infty]$, we put
\[
A'=\{q':=q/(q-1) \colon p\in A\},
\]
and we say that $A$ and $A'$ are conjugate sets.

\begin{proposition}\label{lem:latticeEst}
Let $\XX$ be a Banach space with an unconditional basis $\UB=(\uu_j)_{j=1}^\infty$. Suppose that the lattice structure on $\XX$ induced by $\UB$ satisfies a lower $r_0$-estimate and an upper $q_0$-estimate, $1\le q_0\le r_0\le\infty$. Then $J_H[\OB,\XX]\subseteq[1,r_0]$ and $J_B[\OB,\XX]\subseteq[q_0,\infty]$ for any semi-normalized $M$-bounded biorthogonal system $\OB=(\xx_n,\xx_n^*)_{n=1}^\infty$  in $\XX$. In particular, if $\XX=\ell_p$, $1\le p\le\infty$, then  $J_B[\OB,\XX]\subseteq[p,\infty]$ and $J_H[\OB,\XX]\subseteq[1,p]$ (we replace $\ell_\infty$ with $c_0$ if $p=\infty$).
\end{proposition}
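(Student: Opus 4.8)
The plan is to reduce both inclusions to the case of a disjointly supported sequence, where the lattice estimates apply by definition, and then transfer the resulting bounds back to $\OB$. Set $\delta=\inf_n\norm{\xx_n}>0$, $\Delta=\sup_n\norm{\xx_n}<\infty$, and $K=\sup_n\norm{\xx_n^*}<\infty$, the last two finite because $\OB$ is $M$-bounded. The starting observation, from biorthogonality alone, is that the $\xx_n$ are uniformly separated: since $\xx_n^*(\xx_n-\xx_m)=1$ for $n\neq m$, we get $\norm{\xx_n-\xx_m}\ge 1/K$, so no subsequence of $(\xx_n)$ is norm-Cauchy. I would then pass to a subsequence along which every coordinate $\uu_j^*(\xx_n)$ converges (a diagonal extraction, each coordinate sequence being bounded), and form the differences $\zz_k=\xx_{n_{2k}}-\xx_{n_{2k-1}}$. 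These are coordinatewise null, and by the separation bound they are semi-normalized, $1/K\le\norm{\zz_k}\le 2\Delta$.

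Next I would invoke the Bessaga--Pełczyński selection principle: a semi-normalized, coordinatewise-null sequence has a subsequence that is a basic sequence equivalent, with constants as close to $1$ as desired, to a block basic sequence $(\ww_k)$ of $\UB$. Being a block basis, $(\ww_k)$ is disjointly supported in the lattice induced by $\UB$ and is semi-normalized, so the hypotheses apply to it verbatim. The lower $r_0$-estimate yields $\norm{\sum_k a_k\ww_k}\gtrsim(\sum_k\abs{a_k}^{r_0})^{1/r_0}$ and the upper $q_0$-estimate yields $\norm{\sum_k a_k\ww_k}\lesssim(\sum_k\abs{a_k}^{q_0})^{1/q_0}$; via the equivalence the same two bounds hold for $\sum_k a_k\zz_k$ (after a harmless relabelling to the selected subsequence).

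With this in hand both inclusions follow by testing. For $J_H\subseteq[1,r_0]$, observe that $\sum_k a_k\zz_k$ is a series in $\OB$ whose coefficient sequence has $\ell_q$-norm $2^{1/q}(\sum_k\abs{a_k}^q)^{1/q}$; if $q\in J_H[\OB,\XX]$, combining the series bound with the lower estimate gives $(\sum_k\abs{a_k}^{r_0})^{1/r_0}\lesssim(\sum_k\abs{a_k}^{q})^{1/q}$ for all finitely supported $a$, and taking $a=(1,\dots,1)$ with $m$ entries forces $m^{1/r_0}\lesssim m^{1/q}$, i.e.\ $q\le r_0$. For $J_B\subseteq[q_0,\infty]$, put $f=\sum_{k=1}^m\zz_k$; biorthogonality gives $\xx_{n_j}^*(f)=\pm1$ for each of the $2m$ indices $n_j$ involved, so $(\sum_n\abs{\xx_n^*(f)}^r)^{1/r}\ge(2m)^{1/r}$, while the upper estimate gives $\norm{f}\lesssim m^{1/q_0}$. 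If $r\in J_B[\OB,\XX]$ this forces $(2m)^{1/r}\lesssim m^{1/q_0}$, i.e.\ $r\ge q_0$.

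The step I expect to be the crux is the reduction to a disjoint sequence, and the key to making it work cleanly is the differencing: it simultaneously produces coordinatewise-null vectors (so that Bessaga--Pełczyński applies), keeps them bounded below (via the $M$-boundedness separation bound, which is essential), and preserves enough of the biorthogonal structure---coefficients $\pm1$---to drive the Besselian estimate without ever needing the coordinatewise limit of $(\xx_n)$ to lie in $\XX$; this last point is what lets the argument avoid any reflexivity assumption. The boundary cases are immediate, since $J_H\subseteq[1,\infty]$ when $r_0=\infty$ and $J_B\subseteq[1,\infty]$ when $q_0=1$ are vacuous, so one may assume $r_0<\infty$ and $q_0>1$ throughout. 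Finally, the statement for $\XX=\ell_p$ is the special case $q_0=r_0=p$, as disjoint vectors in $\ell_p$ satisfy both estimates with constant $1$ (with $\ell_\infty$ replaced by $c_0$ to retain a basis).
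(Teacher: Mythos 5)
Your proof is correct, and it splits naturally into a half that matches the paper and a half that genuinely departs from it. For the Hilbertian inclusion $J_H[\OB,\XX]\subseteq[1,r_0]$ you follow essentially the paper's route: a diagonal extraction plus differencing produces a semi-normalized, coordinatewise-null sequence (your separation bound $\norm{\xx_n-\xx_m}\ge 1/K$ is exactly what the paper implicitly relies on), which by Bessaga--Pe{\l}czy\'{n}ski (the paper invokes the gliding hump directly) is equivalent to a semi-normalized block basic sequence, where the lower $r_0$-estimate collides with the boundedness of the series transform. Where you diverge is the Besselian inclusion $J_B[\OB,\XX]\subseteq[q_0,\infty]$. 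The paper deduces it by duality: since $q_0>1$, no block basic sequence of $\UB$ is equivalent to the unit vector basis of $\ell_1$, so $\UB$ is shrinking, $\UB^*$ induces on $\XX^*$ a lattice structure with a lower $q_0'$-estimate, and the already-proved Hilbertian part applied to the dual system $(\xx_n^*,h_\XX(\xx_n))_{n=1}^\infty$ gives $J_H[\OB^*,\XX^*]\subseteq[1,q_0']$, which dualizes to the claim. You instead prove it directly on the same block sequence, testing the coefficient transform on $f=\sum_{k=1}^m\zz_k$: biorthogonality forces $2m$ coefficients of modulus $1$, so $\norm{(\xx_n^*(f))_n}_r\ge(2m)^{1/r}$, while the upper $q_0$-estimate transferred through the equivalence gives $\norm{f}\lesssim m^{1/q_0}$, forcing $r\ge q_0$. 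Your route is more elementary and self-contained---it avoids the shrinking-basis theorem, the dual lattice estimate from Lindenstrauss--Tzafriri, and the bidual map---whereas the paper's route is more modular, recycling the Hilbertian half verbatim; both dispose of the degenerate cases $r_0=\infty$ and $q_0=1$ as vacuous, and both read off the $\ell_p$ case from $q_0=r_0=p$.
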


\begin{proof}
Let $\UB^*=(\uu_j^*)_{j=1}^\infty$ be the dual basis of $\UB$. Suppose that $\OB$ is $q$-Hilbertian, $1\le q \le\infty$. By the Cantor diagonal technique, passing to a subsequence we can suppose that $(\uu_j^*(\xx_n))_{n=1}^\infty$ converges for every $j\in\NN$. Then, replacing $\xx_n$ with $\xx_{2n}-\xx_{2n-1}$ and  $\xx_n^*$ with $(\xx_{2n}^*-\xx_{2n-1}^*)/2$ we can suppose that $\lim_n \uu_j^*(\xx_n)=0$ for every $j\in\NN$. By the gliding hump technique, passing to a subsequence we can suppose that $(\xx_n)_{n=1}^\infty$ is equivalent to a block basic sequence  with respect to $\UB$, say $\YB=(\yy_n)_{n=1}^\infty$. On the one hand, the mapping
\[
(a_n)_{n=1}^\infty\mapsto \sum_{n=1}^\infty a_n\, \yy_n
\]
defines a bounded operator from $\ell_q$ into $\XX$. On the other hand, since $\YB$ is semi-normalized, there is a constant $C$ such that
\[
\norm{f}_{r_0} \le C \norm{\sum_{n=1}^\infty a_n\, \yy_n}, \quad f=(a_n)_{n=1}^\infty\in c_{00}.
\]
We infer that $q\le r_0$.

The above argument proves the Hilbertian part. In order to prove the Besselian part, we assume that  $q_0>1$;  otherwise there is nothing to prove. Since every semi-normalized block basic sequence $(\zz_n)_{n=1}^\infty$ with respect to $\UB$  satisfies the estimate
\[
\norm{\sum_{n=1}^\infty a_n\, \zz_n}\le C \norm{f}_{q_0}, \quad f=(a_n)_{n=1}^\infty\in c_{00},
\]
for some constant $C$, $\UB$ does not have any block basic sequence equivalent to the unit vector system of $\ell_1$. By \cite{AlbiacKalton2016}*{Theorem 3.3.1}, $\UB^*$ is a basis of $\XX^*$. By \cite{LinTza1979}*{Proposition 1.f.5}, $\UB^*$ induces on $\XX^*$ a lattice structure which satisfies a lower $q_0'$-estimate. Let $\OB^*$ denote the biorthogonal system $(\xx_n^*,h_\XX(\xx_n))_{n=1}^\infty$, where $h_\XX\colon\XX\to \XX^{**}$ is the bidual map. By the already proved Hilbertian part, $J_H[\OB^*,\XX^*]\subseteq[1,q_0']$. By duality, $J_B[\OB,\XX]\subseteq (J_H[\OB^*,\XX^*])'$. Consequently,
$
J_B[\OB,\XX]\subseteq [1,q_0']'=[q_0,\infty].
$

We conclude the proof by noticing that $\ell_p$ a $p$-convex and $p$-concave Banach lattice.
\end{proof}

To establish our results as accurately as possible, we will need notions more general than $p$-Besselian and $p$-Hilbertian bases. A \emph{sequence space} will be a Banach space or, more generally, a quasi-Banach space $\UU\subseteq\FF^\NN$ for which the unit vector system
\[
\EB=(\ee_n)_{n=1}^\infty
\] is a normalized $1$-unconditional basis. We say that a biorthogonal system $\OB$ in a Banach space $\XX$ is $\UU$-Besselian (resp., $\UU$-Hilbertian) if the coefficient transform (resp., the series transform) is a bounded operator from $\XX$ into $\UU$ (resp., from $\UU$ into $\XX$).

Given sequences $\XB=(\xx_n)_{n=1}^\infty$ and $\YB=(\yy_n)_{n=1}^\infty$ in Banach spaces $\XX$ and $\YY$, respectively, their direct sum is the sequence $\XB\oplus\YB=(\uu_n)_{n=1}^\infty$ in $\XX\oplus\YY$ given by
\[
\uu_{2n-1}=(\xx_n,0), \quad \uu_{2n}=(0,\yy_n), \quad n\in\NN.
\]
We put $\XB^2=\XB\oplus\XB$, and we call $\XB^2$ the square of $\XB$. If $\XB$ is a semi-normalized Schauder basis of $\XX$, and $\YB$ is a semi-normalized Schauder basis of $\YY$, then $\XB\oplus\YB$ is a seminormalized Schauder basis of $\XX\oplus\YY$ whose dual basis is, modulus the natural identification of $(\XX\oplus\YY)^*$ with $\XX^*\oplus\YY^*$, $\XB^*\oplus\YB^*$.  Given biorthogonal systems $\OB=(\xx_n,\xx_n^*)_{n=1}^\infty$ and $\RB=(\yy_n,\yy_n^*)_{n=1}^\infty$ in Banach spaces $\XX$ and $\YY$, respectively, their direct sum is the biorthogonal system $\OB\oplus\RB$ in $\XX\oplus\YY$ whose first and second components are  $(\xx_n)_{n=1}^\infty\oplus (\yy_n)_{n=1}^\infty$ and $(\xx^*_n)_{n=1}^\infty\oplus (\yy^*_n)_{n=1}^\infty$, respectively.

The \emph{rotation} of a sequence $\XB=(\xx_n)_{n=1}^\infty$ is a Banach space $\XX$ will be sequence $\XB_\diamond=(\yy_n)_{n=1}^\infty$  given by
\[
\yy_{2n-1}=\frac{\xx_{2n-1}-\xx_{2n}}{\sqrt{2}},\quad
\yy_{2n}=\frac{\xx_{2n-1}+\xx_{2n}}{\sqrt{2}}, \quad n\in\NN.
\]
If $\XB$ is Schauder basis, then $\XB_\diamond$ is a Schauder basis with dual basis $(\XB^*)_\diamond$. Given a biorthogonal system $\OB=(\xx_n,\xx_n^*)_{n=1}^\infty$  in $\XX$, its rotation is the biorthogonal system $\OB_\diamond$ in $\XX$ whose first and second components are the rotations of $(\xx_n)_{n=1}^\infty$ and $(\xx^*_n)_{n=1}^\infty$, respectively.

We say that a sequence space $\UU$ is lattice isomorphic to its square if the unit vector system of $\UU$ is equivalent to its square.

Lemma~\ref{lem:gathered} below, whose straightforward proof we omit, gathers some properties of these notions that we will need.

\begin{lemma}\label{lem:gathered}
Let $\UU$ be a sequence space, and let $\OB=(\xx_n,\xx_n^*)_{n=1}^\infty$ be a biorthogonal system in a Banach space $\XX$. Suppose that $\OB$ is $\UU$-Besselian (resp., $\UU$-Hilbertian).
\begin{enumerate}[label=(\roman*), leftmargin=*, widest=iii]
\item if $(\lambda_n)_{n=1}^\infty$ is a semi-normalized sequence in $\FF$, then the perturbed system $(\lambda_n\, \xx_n,\lambda_n^{-1} \xx_n^*)_{n=1}^\infty$ is $\UU$-Besselian (resp., $\UU$-Hilbertian).

\item Suppose that $\UU$ is lattice isomorphic to its square. Then, the rotated system $\OB_\diamond$ is  $\UU$-Besselian (resp., $\UU$-Hilbertian). Moreover, if $\RB$ is a $\UU$-Besselian (resp., $\UU$-Hilbertian) biorthogonal system of a Banach space $\YY$, then $\OB\oplus\RB$ is a $\UU$-Besselian (resp., $\UU$-Hilbertian) biorthogonal system of $\XX\oplus\YY$.

\item Suppose that $\UU$ and $\XX$ are real-valued spaces. Then, $\OB$ is a $\UU^{\,\CC}$-Besselian (resp., Hilbertian) biorthogonal system in the complexification $\XX^{\,\CC}$ of $\XX$.
\end{enumerate}
\end{lemma}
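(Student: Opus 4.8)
The plan is to unwind, in each of the three parts, the definitions of the coefficient transform $f\mapsto(\xx_n^*(f))_{n=1}^\infty$ and the series transform $(a_n)_{n=1}^\infty\mapsto\sum_n a_n\xx_n$, and to recognize the transform attached to the modified system as the transform of $\OB$ composed with a single ``universal'' operator acting on the sequence space $\UU$ itself. The whole lemma thereby reduces to boundedness of a few explicit operators on $\UU$, for which the only structural inputs are the $1$-unconditionality of the unit vector system $\EB$ and, in (ii), the lattice isomorphism $\UU\cong\UU\oplus\UU$.

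\emph{Part (i).} The coefficient transform of the perturbed system $(\lambda_n\xx_n,\lambda_n^{-1}\xx_n^*)_{n=1}^\infty$ equals the coefficient transform of $\OB$ followed by the diagonal multiplier $(a_n)_n\mapsto(\lambda_n^{-1}a_n)_n$, while its series transform equals the multiplier $(a_n)_n\mapsto(\lambda_n a_n)_n$ followed by the series transform of $\OB$. Since $(\lambda_n)_n$ is semi-normalized, both $\sup_n\abs{\lambda_n}$ and $\sup_n\abs{\lambda_n^{-1}}$ are finite, and $1$-unconditionality of $\EB$ shows that a diagonal multiplier by a bounded scalar sequence is bounded on $\UU$, with norm at most the supremum of its entries. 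Composing proves both statements.

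\emph{Part (ii).} The square hypothesis says precisely that the interleaving map $\UU\oplus\UU\to\UU$ sending $((b_n)_n,(c_n)_n)$ to the sequence $(d_k)_k$ with $d_{2n-1}=b_n$, $d_{2n}=c_n$ is an isomorphism with bounded inverse (the de-interleaving map). For the direct sum, the coefficient transform of $\OB\oplus\RB$ is, under this identification, the interleaving of the coefficient transforms of $\OB$ and $\RB$, hence bounded into $\UU$; the Hilbertian case follows by de-interleaving first. For the rotation, writing $a_k=\xx_k^*(f)$, the coefficient transform of $\OB_\diamond$ sends $f$ to the sequence obtained from $(a_k)_k$ by the block operation $b_{2n-1}=(a_{2n-1}-a_{2n})/\sqrt{2}$, $b_{2n}=(a_{2n-1}+a_{2n})/\sqrt{2}$. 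Splitting $(a_k)_k$ into its odd and even parts $\tilde u=(a_{2n-1})_n$ and $\tilde v=(a_{2n})_n$ by de-interleaving, this rotated sequence is exactly the interleaving of $(\tilde u-\tilde v)/\sqrt{2}$ and $(\tilde u+\tilde v)/\sqrt{2}$, so the (quasi-)triangle inequality in $\UU$ together with the interleaving isomorphism bounds $\norm{(b_k)_k}_\UU$ by a multiple of $\norm{(a_k)_k}_\UU$. Thus the pairwise-rotation operator is bounded on $\UU$, and composing it with the transforms of $\OB$ settles the Besselian case; the Hilbertian case is identical once one checks that the series transform of $\OB_\diamond$ is the series transform of $\OB$ precomposed with the (equally bounded) inverse block rotation $d_{2n-1}=(c_{2n-1}+c_{2n})/\sqrt{2}$, $d_{2n}=(c_{2n}-c_{2n-1})/\sqrt{2}$.

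\emph{Part (iii).} Here I would invoke that a bounded operator between real Banach spaces extends to the complexifications with comparable norm. The coefficient transform (resp. series transform) of $\OB$ viewed in $\XX^{\,\CC}$ is nothing but the complexification of the corresponding real operator $\XX\to\UU$ (resp. $\UU\to\XX$), now landing in $\UU^{\,\CC}$ (resp. defined on $\UU^{\,\CC}$), so boundedness transfers immediately. The computations are all elementary, which is why the statement is flagged as straightforward; the only points deserving care are that $1$-unconditionality suffices for the multiplier and rotation bounds with \emph{no} symmetry assumption on $\UU$, that the square hypothesis is genuinely what lets one peel apart odd and even coordinates in (ii), and the quasi-Banach bookkeeping, where one should establish the estimates first on finitely supported sequences and pass to the limit, replacing the triangle inequality by the quasi-triangle inequality throughout. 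A one-line check that $\OB_\diamond$ and $\OB\oplus\RB$ are indeed biorthogonal, using $\xx_n^*(\xx_k)=\delta_{n,k}$, is also worth recording at the outset.
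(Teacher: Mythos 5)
Your proof is correct. The paper actually omits the proof of this lemma, declaring it straightforward, and your argument---factoring each coefficient/series transform of the modified system through the corresponding transform of $\OB$ composed with an explicit bounded operator on $\UU$ (diagonal multiplier, interleaving/de-interleaving, block rotation, complexification), with boundedness coming from $1$-unconditionality understood in the lattice sense and, in (ii), from the square hypothesis---is precisely the straightforward argument the authors had in mind, including the care you flag for the quasi-Banach case.
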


The authors of  \cite{AAW2019} introduced an alternative quantitative measure of the unconditionality of a biorthogonal system $\OB=(\xx_n,\xx_n^*)_{n=1}^\infty$ which is more accurate in some situations. Set
\[
\NN[m]=\{n\in\ZZ \colon 1\le n\le m\}
\]
and define
\[
\aunc_m=\aunc_m[\OB,\XX] :=\sup\{  \norm{S_A(f)} \colon \norm{f}\le 1,\; \supp(f) \subseteq\NN[m],\; A\subseteq\NN\}.
\]
We have $\aunc_m\le\unc_m$ for all $m\in\NN$, and $\sup_m\aunc_m=\sup_m\unc_m$.

If $\OB$ and $\RB$ are biorthogonal systems in Banach spaces $\XX$ and $\YY$, then
\begin{align}
\aunc_m[\OB\oplus \RB,\XX\oplus\YY]&= \max\{\aunc_{\lceil m/2\rceil}[\OB,\XX], \aunc_{\lfloor m/2\rfloor}[\RB,\YY]\}, \label{eq:AUNCCDS}\\
\unc_m[\OB\oplus \RB,\XX\oplus\YY]&=\max\{\unc_m[\OB,\XX], \unc_m[\RB,\YY]\}\label{eq:UNCCDS}.
\end{align}

Loosely speaking, we could say that $\OB\oplus \RB$ inherits naturally the properties of $\OB$ and $\RB$. In contrast, `rotating' $\OB\oplus \RB$ gives rise to more interesting situations. Set $\OB\diamond\RB:=(\OB\oplus \RB)_\diamond$ and define
\begin{align*}
\adom_m[\OB,\XX,\RB,\YY]&=\sup_{\substack{A\subseteq\NN[m]\\ (a_n)_{n\in A}\in\FF^A\setminus\{0\}}}
\frac{\norm{\sum_{n\in A} a_n \, \xx_n}}{\norm{\sum_{n\in A} a_n \, \yy_n}},\quad m\in\NN,\\
\dom_m[\OB,\XX,\RB,\YY]&=\sup_{\substack{ \abs{A}\le m\\ (a_n)_{n\in A}\in\FF^A\setminus\{0\}}}
\frac{\norm{\sum_{n\in A} a_n \, \xx_n}}{\norm{\sum_{n\in A} a_n\, \yy_n}}, \quad m\in\NN.
\end{align*}
\begin{lemma}\label{lem:ccdom}
Let $\OB$ and $\RB$ be biorthogonal systems in Banach spaces $\XX$ and $\YY$ respectively. Then
\begin{align*}
\unc_{2m}[\OB\diamond \RB,\XX\oplus\YY]&\ge \frac{1}{2}\max\{\dom_{m}[\OB,\XX,\RB,\YY], \dom_{m}[\RB,\YY,\OB,\XX]\},\; \text{and}\\
\aunc_{2m}[\OB\diamond \RB,\XX\oplus\YY]&\ge \frac{1}{2}\max\{\adom_{m}[\OB,\XX,\RB,\YY], \adom_{m}[\RB,\YY,\OB,\XX]\}.
\end{align*}
\end{lemma}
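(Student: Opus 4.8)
The plan is to prove each inequality by exhibiting a suitable test family and tracking how the rotation $\OB\diamond\RB$ interacts with coordinate projections. Recall that $\OB\diamond\RB=(\OB\oplus\RB)_\diamond$, so by definition its vectors are obtained from the direct-sum system $(\uu_n,\uu_n^*)_{n=1}^\infty$, where $\uu_{2n-1}=(\xx_n,0)$ and $\uu_{2n}=(0,\yy_n)$, via the rotation formula
\[
\vv_{2n-1}=\frac{\uu_{2n-1}-\uu_{2n}}{\sqrt2}=\frac{(\xx_n,-\yy_n)}{\sqrt2},\quad
\vv_{2n}=\frac{\uu_{2n-1}+\uu_{2n}}{\sqrt2}=\frac{(\xx_n,\yy_n)}{\sqrt2}.
\]
The companion dual vectors $\vv_{2n-1}^*,\vv_{2n}^*$ are the analogous rotations of $\uu_{2n-1}^*=(\xx_n^*,0)$ and $\uu_{2n}^*=(0,\yy_n^*)$. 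The key algebraic observation is that $\vv_{2n-1}+\vv_{2n}=\sqrt2\,(\xx_n,0)$ and $\vv_{2n-1}-\vv_{2n}=-\sqrt2\,(0,\yy_n)$, so a coordinate projection onto the \emph{odd} indices of the rotated system, applied to a vector supported on $\{\vv_{2n-1},\vv_{2n}\}$, mixes the $\XX$- and $\YY$-components in a way that lets us read off the ratio appearing in $\dom_m$ and $\adom_m$.

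First I would fix $A\subseteq\NN[m]$ and scalars $(a_n)_{n\in A}$ realizing (up to an arbitrarily small loss) the supremum defining $\dom_{m}[\OB,\XX,\RB,\YY]$, so that $\norm{\sum_{n\in A}a_n\xx_n}$ is close to $\dom_m$ while $\norm{\sum_{n\in A}a_n\yy_n}=1$. I would then form the element $g=\sum_{n\in A}\frac{a_n}{\sqrt2}(\vv_{2n-1}+\vv_{2n})=\sum_{n\in A}a_n(\xx_n,0)$ of $\XX\oplus\YY$, which is supported (in the rotated basis) on the index set $B=\{2n-1,2n:n\in A\}$. Since $\abs{A}\le m$ we have $\abs{B}\le 2m$, and the coordinate projection $S_B$ reproduces $g$. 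The crucial step is to apply a projection $S_C$ with $C=\{2n-1:n\in A\}$, the odd part of $B$: using $\vv_{2n-1}^*(g)$ one computes that $S_C(g)$ equals, up to the normalizing factor, the vector whose two components are $\sum a_n\xx_n$ and $-\sum a_n\yy_n$; its norm in $\XX\oplus\YY$ dominates $\max\{\norm{\sum a_n\xx_n},\norm{\sum a_n\yy_n}\}\ge\dom_m$. Dividing by $\norm g\le$ (a controlled multiple of) the larger of the two component norms, together with the factor $1/2$ absorbed from the $\sqrt2$'s and the projection bookkeeping, yields $\unc_{2m}[\OB\diamond\RB]\ge\frac12\dom_m[\OB,\XX,\RB,\YY]$. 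Swapping the roles of $\OB$ and $\RB$ (equivalently projecting onto the even indices) gives the same bound with $\dom_m[\RB,\YY,\OB,\XX]$, and taking the maximum finishes the $\unc$ line.

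For the second inequality the argument is identical except that the test vectors must be supported on $\NN[m]$ rather than merely have cardinality $\le m$; this is exactly the distinction between $\adom_m$ (supremum over $A\subseteq\NN[m]$) and $\dom_m$ (supremum over $\abs{A}\le m$) on the one hand, and between $\aunc_{2m}$ and $\unc_{2m}$ on the other. Since $A\subseteq\NN[m]$ forces $B\subseteq\NN[2m]$, the vector $g$ built above has $\supp(g)\subseteq\NN[2m]$, so it is an admissible competitor in the definition of $\aunc_{2m}$, and the same projection $S_C$ delivers the estimate $\aunc_{2m}[\OB\diamond\RB]\ge\frac12\adom_m[\OB,\XX,\RB,\YY]$, with the symmetric bound obtained as before. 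The main obstacle I anticipate is purely bookkeeping rather than conceptual: one must verify that the normalization constant $\norm g$ in the denominator is correctly matched to the component norm appearing in $\dom_m$ (so that no spurious factor survives beyond the stated $\tfrac12$), and that the action of $\vv_{2n-1}^*$ on $g$ isolates the intended odd part cleanly. I would handle this by computing $\vv_{2n-1}^*(\vv_{2k-1}),\vv_{2n-1}^*(\vv_{2k})$ explicitly from the rotation formulas, confirming biorthogonality and thereby that $S_C$ acts as claimed.
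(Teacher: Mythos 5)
Your rotation identities and the odd/even projection idea are exactly the right ingredients (they are the ones used in \cite{AABBL2021}*{Proposition 4.5}, which the paper invokes), but your test vector sits on the wrong side of the ratio, and the final division step fails. With your normalization $\norm{\sum_{n\in A}a_n\yy_n}=1$ and $\norm{\sum_{n\in A}a_n\xx_n}\approx\dom_m[\OB,\XX,\RB,\YY]$, your vector
\[
g=\sum_{n\in A}\frac{a_n}{\sqrt2}\left(\vv_{2n-1}+\vv_{2n}\right)=\Bigl(\sum_{n\in A}a_n\xx_n,\,0\Bigr)
\]
has $\norm{g}=\norm{\sum_{n\in A}a_n\xx_n}\approx\dom_m$, i.e., the \emph{large} component norm. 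Your projection computation is correct: for $C=\{2n-1\colon n\in A\}$ one gets $S_C(g)=\tfrac12\bigl(\sum_{n\in A}a_n\xx_n,\,-\sum_{n\in A}a_n\yy_n\bigr)$, so $\norm{S_C(g)}\approx\tfrac12\dom_m$. But then $\norm{S_C(g)}/\norm{g}\approx\tfrac12$, which yields only the trivial bound $\unc_{2m}\ge\tfrac12$, not $\unc_{2m}\ge\tfrac12\dom_m$. This is not the bookkeeping issue you flagged; no choice of projection applied to this $g$ can repair it, because the denominator $\norm{g}$ already carries the full size of $\dom_m$. (In fact, your pair $(g,C)$ is the correct one for the \emph{other} term, $\dom_m[\RB,\YY,\OB,\XX]$, provided the scalars are chosen to nearly realize that supremum instead.)

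The fix is to build the test vector from the difference rather than the sum, so that it collapses onto the $\YY$-coordinate, the side with \emph{small} norm: set
\[
h=\sum_{n\in A}\frac{a_n}{\sqrt2}\left(\vv_{2n}-\vv_{2n-1}\right)=\Bigl(0,\,\sum_{n\in A}a_n\yy_n\Bigr),
\qquad \norm{h}=\norm{\sum_{n\in A}a_n\yy_n}=1,
\]
and project onto the even indices $C=\{2n\colon n\in A\}$. Since the coefficient of $h$ at $\vv_{2n}$ is $a_n/\sqrt2$, we get $S_C(h)=\tfrac12\bigl(\sum_{n\in A}a_n\xx_n,\,\sum_{n\in A}a_n\yy_n\bigr)$, whence
\[
\frac{\norm{S_C(h)}}{\norm{h}}\ge\frac12\,\frac{\norm{\sum_{n\in A}a_n\xx_n}}{\norm{\sum_{n\in A}a_n\yy_n}}.
\]
Taking suprema gives $\unc_{2m}[\OB\diamond\RB,\XX\oplus\YY]\ge\tfrac12\dom_m[\OB,\XX,\RB,\YY]$ (note $\abs{C}=\abs{A}\le m\le 2m$); interchanging the roles of $\OB$ and $\RB$ handles the other term in the maximum. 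Moreover, when $A\subseteq\NN[m]$ we have $\supp(h)\subseteq\NN[2m]$ and $\norm{h}\le1$, so the same vector is an admissible competitor for $\aunc_{2m}$, which yields the $\adom_m$ inequality exactly as you describe. With this one change of test vector the rest of your outline goes through.
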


\begin{proof}
Proceed as in the proof of \cite{AABBL2021}*{Proposition 4.5}.
\end{proof}

The parameters $(\dom_m)_{m=1}^\infty$ also serve to estimate the conditionality parameters of Schauder bases, or biorthogonal systems, squeezed between two sequence spaces. If $\UU_1$ and $\UU_2$ are sequence spaces, we set
\[
\dom_m[\UU_1,\UU_2]=\dom_m[\EB,\UU_1,\EB,\UU_2], \quad m\in\NN.
\]
\begin{lemma}\label{lem:BHCC}
Let $\XB$ be a biorthogonal system of the Banach space $\XX$. Suppose that $\XB$   $\UU_1$-Hilbertian and $\UU_2$-Besselian. Then,
\[
\unc_m[\XB,\XX]\lesssim \dom_m[\UU_1,\UU_2].
\]
\end{lemma}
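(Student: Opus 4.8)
The plan is to chase the definitions and combine the $\UU_1$-Hilbertian and $\UU_2$-Besselian hypotheses through the coordinate projection. Fix $A\subseteq\NN$ with $\abs{A}\le m$ and an arbitrary $f\in\XX$. I want to bound $\norm{S_A(f)}$ by $\dom_m[\UU_1,\UU_2]\,\norm{f}$ up to a multiplicative constant depending only on the Hilbertian and Besselian norms. Write $a_n=\xx_n^*(f)$, so that $S_A(f)=\sum_{n\in A}a_n\,\xx_n$. First I would apply the series transform ($\UU_1$-Hilbertian) to the finitely supported sequence $(a_n)_{n\in A}$: since the series transform is bounded from $\UU_1$ into $\XX$, there is a constant $C_1$ with
\[
\norm{S_A(f)}=\norm{\sum_{n\in A}a_n\,\xx_n}\le C_1\,\norm{\sum_{n\in A}a_n\,\ee_n}_{\UU_1}.
\]

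Next I would transfer the $\UU_1$-norm of the coefficient vector to its $\UU_2$-norm by invoking the definition of $\dom_m[\UU_1,\UU_2]=\dom_m[\EB,\UU_1,\EB,\UU_2]$. Since $A\subseteq\NN$ has $\abs{A}\le m$, that parameter is exactly the supremum of the ratio $\norm{\sum_{n\in A}a_n\,\ee_n}_{\UU_1}/\norm{\sum_{n\in A}a_n\,\ee_n}_{\UU_2}$ over all such $A$ and all nonzero coefficient families, so
\[
\norm{\sum_{n\in A}a_n\,\ee_n}_{\UU_1}\le \dom_m[\UU_1,\UU_2]\,\norm{\sum_{n\in A}a_n\,\ee_n}_{\UU_2}.
\]
Finally I would close the loop using the $\UU_2$-Besselian hypothesis applied to $S_A(f)$ itself: the coefficient transform is bounded from $\XX$ into $\UU_2$, and since the basis is biorthogonal the coefficients of $S_A(f)$ are precisely $(a_n)_{n\in A}$ (zero off $A$), whence $\norm{\sum_{n\in A}a_n\,\ee_n}_{\UU_2}\le C_2\,\norm{S_A(f)}\le C_2\,\norm{f}$, using that coordinate projections are contractive enough or, more robustly, that the coefficient transform applied directly to $f$ already gives $\norm{(a_n)_{n\in A}}_{\UU_2}\le \norm{(a_n)_n}_{\UU_2}\le C_2\norm{f}$ by $1$-unconditionality of $\EB$ in $\UU_2$.

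Concatenating the three inequalities yields $\norm{S_A(f)}\le C_1 C_2\,\dom_m[\UU_1,\UU_2]\,\norm{f}$, and taking the supremum over $\abs{A}\le m$ and $\norm{f}\le 1$ gives $\unc_m[\XB,\XX]\le C_1 C_2\,\dom_m[\UU_1,\UU_2]$, which is the claimed estimate $\unc_m[\XB,\XX]\lesssim \dom_m[\UU_1,\UU_2]$. The one point requiring care is the last step: I must pass from the truncated coefficient vector to $f$ without losing a factor depending on $m$. Here the right move is to use the $1$-unconditionality of the unit vector basis $\EB$ in $\UU_2$, so that restricting the coefficient sequence of $f$ to the index set $A$ does not increase its $\UU_2$-norm; this guarantees the constant is genuinely independent of $m$ and of $A$, which is exactly what the $\lesssim$ in the statement demands. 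No deeper obstruction arises, since each of the three hypotheses supplies one of the three links in the chain.
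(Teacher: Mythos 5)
Your proof is correct and follows essentially the same chain as the paper's: Hilbertian bound on $S_A(f)$, then the definition of $\dom_m[\UU_1,\UU_2]$, then $1$-unconditionality of $\EB$ in $\UU_2$ to pass from the truncated coefficient sequence to the full one, then the Besselian bound applied to $f$. Your first suggestion for the last step (bounding via $\norm{S_A(f)}\le\norm{f}$, i.e.\ assuming the coordinate projections are contractive) would be circular, since that is precisely what the lemma is estimating, but you correctly identify and adopt the robust alternative, which is exactly the paper's argument.
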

\begin{proof}
Let $C_1$ be the norm of the series transform, and $C_2$ be the norm of the coefficient transform. Pick $f\in\XX$ and $A\subseteq\NN$ with $\abs{A}\le m$. We have
\begin{align*}
\norm{S_A(f)}
&\le C_1 \norm{(\xx_n^*(f) \Ind_A(n) )_{n=1}^\infty}_{\UU_1}\\
&\le C_1 \dom_m[\UU_1,\UU_2] \norm{(\xx_n^*(f)  \Ind_A(n))_{n=1}^\infty}_{\UU_2}\\
&\le C_1 \dom_m[\UU_1,\UU_2] \norm{(\xx_n^*(f) )_{n=1}^\infty}_{\UU_2}\\
&\le C_1 C_2 \dom_m[\UU_1,\UU_2] \norm{f}.\qedhere
\end{align*}
\end{proof}

The following consequence of Lemma~\ref{lem:BHCC}, whose straightforward proof we omit, single out an argument that we will use several times.
\begin{corollary}\label{cor:OptConditions}
Let $\XB$ be a basis of a Banach space $\XX$, and let $\UU_1$ and $\UU_2$ be sequence spaces. Suppose that
Suppose that $\XB$   $\UU_1$-Hilbertian and $\UU_2$-Besselian, and that $\unc_m[\XB,\XX] \gtrsim \dom_m[\UU_1,\UU_2]$ for $m\in\NN$. Then,
\begin{enumerate}[label=(\roman*), leftmargin=*, widest=ii]
\item $\unc_m[\XB,\XX] \approx \dom_m[\UU_1,\UU_2]$ for $m\in\NN$; and
\item if $\UU_1'$ and $\UU_2'$ are sequence spaces with
\[
\inf_m  \frac{\dom_m[\UU_1',\UU_2']}{\dom_m[\UU_1,\UU_2]}=0,
\]
then either $\XB$ is not $\UU_1'$-Hilbertian or $\XB$ is not $\UU_2'$-Besselian.
\end{enumerate}
\end{corollary}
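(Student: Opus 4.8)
The plan is to obtain both assertions as direct consequences of Lemma~\ref{lem:BHCC}, with no new machinery required. The corollary is essentially a bookkeeping statement that packages together the upper bound furnished by that lemma with the lower bound assumed in the hypotheses.

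For assertion (i), I would first invoke Lemma~\ref{lem:BHCC}: since $\XB$ is $\UU_1$-Hilbertian and $\UU_2$-Besselian, that lemma yields $\unc_m[\XB,\XX]\lesssim \dom_m[\UU_1,\UU_2]$. Combining this with the standing hypothesis $\unc_m[\XB,\XX]\gtrsim \dom_m[\UU_1,\UU_2]$ gives the two-sided estimate $\unc_m[\XB,\XX]\approx \dom_m[\UU_1,\UU_2]$ for all $m\in\NN$, which is precisely (i).

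For assertion (ii), I would argue by contradiction. Suppose that $\XB$ were simultaneously $\UU_1'$-Hilbertian and $\UU_2'$-Besselian. Applying Lemma~\ref{lem:BHCC} a second time, now with the pair $\UU_1'$, $\UU_2'$ in place of $\UU_1$, $\UU_2$, produces $\unc_m[\XB,\XX]\lesssim \dom_m[\UU_1',\UU_2']$. Chaining this against the lower bound from part (i), namely $\dom_m[\UU_1,\UU_2]\lesssim \unc_m[\XB,\XX]$, yields a constant $C<\infty$ with $\dom_m[\UU_1,\UU_2]\le C\,\dom_m[\UU_1',\UU_2']$ for every $m$. Consequently the ratio $\dom_m[\UU_1',\UU_2']/\dom_m[\UU_1,\UU_2]$ is bounded below by $1/C>0$ uniformly in $m$, so its infimum over $m$ is strictly positive. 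This contradicts the hypothesis that the infimum vanishes, forcing at least one of the two assumed properties to fail.

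The only point demanding a little care is uniformity: the implicit constants in the applications of Lemma~\ref{lem:BHCC} depend only on the operator norms of the series and coefficient transforms (the quantities $C_1$, $C_2$ appearing in that lemma's proof), hence are independent of $m$, which is exactly what licenses the passage to a uniform lower bound on the ratio. Beyond tracking this uniformity there is no genuine obstacle, which is why the proof may reasonably be omitted in the text.
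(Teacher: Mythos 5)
Your proposal is correct and matches the paper's intent exactly: the paper explicitly presents this corollary as a consequence of Lemma~\ref{lem:BHCC} whose straightforward proof it omits, and the intended argument is precisely the one you give (Lemma~\ref{lem:BHCC} supplies the upper bound for (i), and a second application of it to $\UU_1'$, $\UU_2'$ yields the contradiction for (ii)). Your remark about the uniformity of the constants is sound and is the only point worth tracking.
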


For further reference, we record the value of the parameters $\dom_m$ in some important cases. Given a sequence $f=(a_n)_{n=1}^\infty\in\FF^\NN$ and an increasing map $\pi\colon\NN\to\NN$, let $f_\pi=(b_n)_{n=1}^\infty\in\FF^\NN$ be the sequence defined by $b_n=a_k$ if $n=\pi(k)$ for some $k\in\NN$, and $b_n=0$ otherwise. A sequence space $\Sym$ is said to be \emph{subsymmetric} if $f_\pi\in\Sym$ for every $f\in \Sym$ and every increasing map $\pi\colon\NN\to\NN$, and we have $\norm{f_\pi}_\Sym=\norm{f}_\Sym$. In general, $\dom_m[\Sym_1,\Sym_2]=\adom_m[\Sym_1,\Sym_2]$ whenever $\Sym_1$ and $\Sym_2$ are subsymmetric sequence spaces. As $\ell_p$-spaces are concerned, we have
\begin{equation}\label{eq:Comlplq}
\dom_m[\ell_q,\ell_r]=\adom_m[\ell_q,\ell_r]=m^{1/q-1/r},\quad m\in\NN,\; 0< q \le r\le \infty.
\end{equation}
Note that $\ell_p$ is not locally convex space in the case when $0<p<1$. However, the parameters $\dom$ and $\adom$ still have sense in the nonlocally convex setting. Other subsymmetric sequence spaces of interest for us are Lorentz sequence spaces. Notice that
\begin{equation}\label{eq:LorentzNorm}
\norm{\sum_{n=1}^m \frac{1}{n^{1/p}} \ee_n}_{\ell_{p,q}}=H_m^{1/q}, \quad m\in\NN,\; 0<p<\infty,\; 0<q \le \infty,
\end{equation}
where $H_m$ is the $m$th harmonic number. Combining this identity with H\"older's inequality we obtain
\begin{equation}\label{eq:domLorentz}
\dom_m[\ell_{p,q},\ell_{p,r}]=H_m^{1/q-1/r}, \quad m\in\NN,\; 0<p<\infty,\; 0<q \le r \le \infty.
\end{equation}

We also record the weighted version of \eqref{eq:domLorentz}. Given $0<q\le\infty$ and a weight $\ww=(w_n)_{n=1}^\infty$ whose primitive weight $(s_n)_{n=1}^\infty$ is doubling, the Lorentz sequence space $d_{1,q}(\ww)$ is the quasi-Banach space consisting of all sequences $f\in c_0$ whose non-increasing rearrangement $(a_n)_{n=1}^\infty$ satisfies
\[
\norm{f}_{d_{1,q}(\ww)}=\left(\sum_{n=1}^\infty (s_n a_n)^q \frac{w_n}{s_n}\right)^{1/q}<\infty,
\]
with the usual modification in $q=\infty$. if $\ww=(n^{1/p-1})_{n=1}^\infty$, then $d_{1,q}(\ww)=\ell_{p,q}$ up to an equivalent norm. We have
\begin{equation}\label{eq:domWLorentz}
\dom_m[d_{1,q}(\ww),d_{1,r}(\ww)]=(H_m[\ww])^{1/q-1/r}, \; m\in\NN,\;  0<q \le r \le \infty,
\end{equation}
where $H_m[\ww]=\sum_{n=1}^m w_n/s_n$. If
\[
s_{rm}\ge 2 s_m, \quad m\in\NN,
\]
for some integer $r$, in which case we say that $(s_m)_{m=1}^\infty$ has the \emph{lower regularity property} (LRP for short),
the growth of the sequence $(H_m[\ww])_{m=1}^\infty$ can be computed.

\begin{lemma}\label{lem:equivalentnorm}
Let $\ww$ be a weight whose primitive sequence $(s_m)_{m=1}^\infty$ has the LRP, and let $0<q\le\infty$. Then,
\[
\norm{f}_{d_{1,q}(\ww)}\approx  \left(\sum_{n=1}^\infty (s_n a_n)^q \frac{1}{n}\right)^{1/q}
\]
for every $f\in c_0$ with non-increasing rearrangement $(a_n)_{n=1}^\infty$. In particular, $H_m[\ww]\approx H_m$ for $m\in\NN$.
\end{lemma}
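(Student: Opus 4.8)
The plan is to eliminate the sequence $(a_n)$ by a layer-cake argument, reducing everything to the comparison of two monotone ``primitive'' sums, and then to derive the ``in particular'' clause as a single instance of the equivalence itself. Write $s_0=0$, so that $w_n=s_n-s_{n-1}$, and recall that $(s_n)_{n=1}^\infty$ is by hypothesis both doubling, say $s_{2n}\le C s_n$, and enjoys the LRP, say $s_{rn}\ge 2 s_n$. The one routine fact I will use repeatedly is the consecutive-ratio bound $s_n\le s_{2(n-1)}\le C s_{n-1}$ for $n\ge 2$, together with its iterate $s_{rk}\le D s_k$, where $D=C^{\lceil\log_2 r\rceil}$. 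Since $(a_n)$ is non-increasing, so is $(a_n^q)$; writing $a_n^q=\int_0^\infty \Ind\{a_n^q>t\}\,dt$ and applying Tonelli gives
\[
\sum_{n=1}^\infty (s_n a_n)^q \frac{w_n}{s_n}=\int_0^\infty \Phi_1(N(t))\,dt, \qquad \sum_{n=1}^\infty (s_n a_n)^q \frac{1}{n}=\int_0^\infty \Phi_2(N(t))\,dt,
\]
where $N(t)=\#\{n\colon a_n^q>t\}$ is a finite initial segment (because $a_n\to 0$), and $\Phi_1(m)=\sum_{n=1}^m s_n^{q-1} w_n$, $\Phi_2(m)=\sum_{n=1}^m s_n^q/n$. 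Thus it suffices to prove $\Phi_1(m)\approx\Phi_2(m)$ uniformly in $m$; in fact I will show both are comparable to $s_m^q$. The case $q=\infty$ is trivial, since then both sides of the asserted identity are literally $\sup_n s_n a_n$, so I assume $0<q<\infty$.

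The estimate $\Phi_1(m)\approx s_m^q$ is a Riemann-sum comparison with $\int_0^{s_m} x^{q-1}\,dx=s_m^q/q$. Because $x\mapsto x^{q-1}$ is monotone, comparing $s_n^{q-1}$ with $x^{q-1}$ on $[s_{n-1},s_n]$ gives one of the two inequalities between $q\,s_n^{q-1}(s_n-s_{n-1})$ and $s_n^q-s_{n-1}^q$ for free, while the reverse one follows from the consecutive-ratio bound $s_n\approx s_{n-1}$; telescoping then yields $\Phi_1(m)\approx s_m^q$. (The directions of the two inequalities swap according to whether $q\ge1$ or $q<1$, but the argument is identical, and the single term $n=1$ is dealt with apart.) I emphasize that this step uses only the doubling of $(s_n)$, not the LRP.

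For $\Phi_2(m)\approx s_m^q$ I would decompose $[1,m]$ into the $r$-adic blocks $B_j=[r^j,r^{j+1})$. On $B_j$ the doubling bound gives $s_n\approx s_{r^j}$ and $\sum_{n\in B_j}1/n\le r-1$, whence $\Phi_2(m)\lesssim \sum_{j} s_{r^j}^q$; the LRP forces $s_{r^{j+1}}^q\ge 2^q s_{r^j}^q$, so this is a geometric sum dominated by its last term $s_{r^J}^q\le s_m^q$. For the reverse inequality I keep only the top block: $\Phi_2(m)\ge s_{\lceil m/r\rceil}^q\sum_{n=\lceil m/r\rceil}^m 1/n\gtrsim s_{\lceil m/r\rceil}^q\gtrsim s_m^q$, where the last step uses doubling through $s_m\le s_{r\lceil m/r\rceil}\le D\, s_{\lceil m/r\rceil}$. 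Finally, the ``in particular'' assertion follows by feeding the already non-increasing (hence self-rearranging) sequence $f=(s_n^{-1}\Ind_{\{n\le m\}})_{n=1}^\infty$ into the equivalence for, say, $q=1$: then $s_n a_n=1$ for $n\le m$, so the left-hand side becomes $H_m[\ww]$ and the right-hand side becomes $H_m$, giving $H_m[\ww]\approx H_m$.

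The main obstacle I anticipate is the $\Phi_2$ estimate, specifically the bookkeeping needed to reconcile the LRP scale $r$ with the doubling scale $2$: the geometric decay of the block sums $\sum_j s_{r^j}^q$ rests on the LRP, while comparability of $s_n$ within a block and the bound $s_m\lesssim s_{m/r}$ rest on doubling, and one must be slightly careful with the truncated end-block and with the finitely many small values of $m$ (where $s_m\approx s_1$), for which the equivalence holds simply by compactness of the ratio. By contrast, the layer-cake reduction and the $\Phi_1$ Riemann-sum comparison are routine.
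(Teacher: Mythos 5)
Your proof is correct, and it reaches the conclusion by a genuinely more self-contained route than the paper. The paper's proof is a short reduction: it introduces the auxiliary weight $\ww'=(s_n/n)_{n=1}^\infty$, notes that the LRP gives $s_n'\approx s_n$ for its primitive sequence $(s_n')_{n=1}^\infty$, and then invokes the principle cited from \cite{AABW2021}*{\S 9.2} that weights with equivalent primitive sequences generate the same space $d_{1,q}$ with equivalent quasi-norms; since $w_n'/s_n'\approx 1/n$, the equivalence drops out, and the ``in particular'' clause is handled exactly as you handle it. Your layer-cake reduction replaces that cited principle by a direct argument in the one instance needed: both quasi-norms (to the power $q$) are integrals of the initial-segment sums $\Phi_1(N(t))$ and $\Phi_2(N(t))$, so everything reduces to the uniform comparison $\Phi_1(m)\approx\Phi_2(m)$, which you obtain by showing both are $\approx s_m^q$. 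Your two estimates correspond precisely to the two ingredients the paper outsources: $\Phi_1(m)\approx s_m^q$ is a reproof (by telescoping plus the consecutive-ratio bound, using doubling only) of Lemma~\ref{lem:equivalentnormbis}, which the paper quotes from \cite{AABW2021}; and $\Phi_2(m)\approx s_m^q$ is the $q$-th-power form of the paper's assertion $s_n'\approx s_n$, established by your $r$-adic block decomposition, where the upper bound uses the LRP for the geometric decay across blocks together with doubling within blocks, and the lower bound uses doubling alone. All of your steps check out; in fact your final worry is unfounded, since the block argument as you set it up is uniform in $m$ --- the truncated end block only shortens the geometric sum in the upper bound, and the top-block lower bound $\sum_{n=\lceil m/r\rceil}^m 1/n\ge 1/2$ holds for every $m\ge 1$ --- so no compactness fallback for small $m$ is needed. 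The trade-off: the paper's proof buys brevity by citing the weight-equivalence machinery, while yours is longer but citation-free and would simultaneously serve as a proof of Lemma~\ref{lem:equivalentnormbis}, making explicit which hypothesis (doubling versus LRP) each half of the equivalence consumes.
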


\begin{proof}
Let $\ww'=(w_n')_{n=1}^\infty$ be the weight defined by $w_n'=s_n/n$, and let $(s_n')_{n=1}^\infty$ be its primitive sequence. Since $(s_n)_{n=1}^\infty$ has the LRP, $s_n\approx s_n'$ for $n\in\NN$. Consequently, $\norm{f}_{d_{1,q}(\ww)}\approx \norm{f}_{d_{1,q}(\ww')}$ for $f\in c_0$ (see, e.g., \cite{AABW2021}*{\S9.2}). Since $w_n'/s_n'\approx 1/n$ for $n\in\NN$, the desired equivalence of quasi-norms holds. To obtain the equivalence for $H_m[\ww]$, we apply the equivalence between quasi-norms with $q=1$ and
\[
a_n=\begin{cases} 1/s_n & \text{ if } n\le m, \\ 0 & \text{ if } n>m,\end{cases}
\]
where $m$ runs over $\NN$.
\end{proof}

We conclude  this preliminary section with another equivalence for the quasi-norms of weighted Lorentz sequence spaces.
\begin{lemma}[see \cite{AABW2021}*{Equation (8.3)}]\label{lem:equivalentnormbis}
Let $\ww$ be a weight whose primitive sequence $(s_n)_{n=1}^\infty$ is doubling, and let $0<q<\infty$. Then
\[
\norm{f}_{d_{1,q}(\ww)}\approx  \left(\sum_{n=1}^\infty  a_n^q (s_n^q-s_{n-1}^q)\right)^{1/q}
\]
for every $f\in c_0$ with non-increasing rearrangement $(a_n)_{n=1}^\infty$.
\end{lemma}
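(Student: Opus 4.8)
The plan is to reduce the claimed equivalence to a term-by-term comparison, so that neither the monotonicity of the non-increasing rearrangement $(a_n)_{n=1}^\infty$ nor the doubling of $(s_n)_{n=1}^\infty$ is used in an essential way; the latter merely guarantees that $d_{1,q}(\ww)$ is a well-defined space. First I would rewrite the defining quasi-norm. Since $(s_n)_{n=1}^\infty$ is the primitive sequence of $\ww$, we have $w_n=s_n-s_{n-1}$ (with $s_0=0$), and therefore
\[
(s_n a_n)^q\frac{w_n}{s_n}=a_n^q\,s_n^{q-1}(s_n-s_{n-1}),\quad n\in\NN.
\]
Thus both expressions to be compared are sums $\sum_{n=1}^\infty a_n^q\,c_n$ with nonnegative coefficients, with $c_n=s_n^{q-1}(s_n-s_{n-1})$ for the quasi-norm and $c_n=s_n^q-s_{n-1}^q$ for the claimed formula. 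It therefore suffices to prove that these two coefficient sequences are equivalent with constants depending only on $q$.

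The heart of the matter is the elementary inequality stating that, for every $q>0$ and every pair $0\le u\le v$ with $v>0$,
\[
\min\{1,q\}\,v^{q-1}(v-u)\le v^q-u^q\le \max\{1,q\}\,v^{q-1}(v-u).
\]
I would establish this by dividing through by $v^q$ and setting $t=u/v\in[0,1]$, which turns it into $\min\{1,q\}(1-t)\le 1-t^q\le \max\{1,q\}(1-t)$. For $q\ge 1$ the lower bound follows from $t^q\le t$ on $[0,1]$, while the upper bound follows from $1-t^q=\int_t^1 q\,s^{q-1}\,ds\le q(1-t)$; for $0<q\le 1$ the two estimates exchange roles, using $t^q\ge t$ and $s^{q-1}\ge 1$ on $[0,1]$. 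Taking $u=s_{n-1}$ and $v=s_n$ yields $s_n^q-s_{n-1}^q\approx s_n^{q-1}(s_n-s_{n-1})$ uniformly in $n$, with constants depending only on $q$; note that the indices $n$ with $w_n=0$ contribute zero to both sides and may be ignored.

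Finally I would multiply the two-sided estimate by $a_n^q\ge 0$ and sum over $n$, which gives
\[
\min\{1,q\}\,\norm{f}_{d_{1,q}(\ww)}^q\le \sum_{n=1}^\infty a_n^q(s_n^q-s_{n-1}^q)\le \max\{1,q\}\,\norm{f}_{d_{1,q}(\ww)}^q,
\]
and taking $q$-th roots yields the asserted equivalence. The argument is genuinely elementary, and I do not expect a serious obstacle: the only points requiring care are splitting the verification of the scalar inequality into the ranges $q\ge 1$ and $0<q\le 1$ and checking that the resulting constants depend on $q$ alone, so that they survive the summation. It is perhaps worth remarking that the doubling hypothesis, although part of the standing definition of $d_{1,q}(\ww)$, is not needed for this particular equivalence.
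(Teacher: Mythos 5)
Your proof is correct. There is nothing in the paper to compare it against line by line: the lemma is imported by citation from \cite{AABW2021}*{Equation (8.3)}, with no internal argument given, so your proposal supplies a self-contained proof where the paper buys brevity by outsourcing. Your route---rewriting the quasi-norm as $\bigl(\sum_{n} a_n^q s_n^{q-1} w_n\bigr)^{1/q}$ and reducing the whole statement to the scalar inequality
\[
\min\{1,q\}\, v^{q-1}(v-u)\;\le\; v^q-u^q\;\le\; \max\{1,q\}\, v^{q-1}(v-u),
\qquad 0\le u\le v,\; v>0,
\]
is the natural one, and your verification (substituting $t=u/v$ and splitting into the ranges $q\ge 1$ and $0<q\le 1$) is sound. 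Multiplying by $a_n^q\ge 0$, summing, and taking $q$-th roots yields the equivalence with constants $(\min\{1,q\})^{1/q}$ and $(\max\{1,q\})^{1/q}$, which depend only on $q$, as required; the degenerate indices with $w_n=0$, where $s_n=s_{n-1}$, contribute zero to both sides, and if $s_n=0$ both terms vanish outright, so the restriction $v>0$ in the scalar inequality is harmless. Your closing remarks are also accurate and worth keeping: neither the monotonicity of $(a_n)_{n=1}^\infty$ nor the doubling of $(s_n)_{n=1}^\infty$ enters the computation, the latter being part of the standing hypotheses only because it is what makes $d_{1,q}(\ww)$ a legitimate quasi-Banach space; thus your argument in fact establishes the displayed equivalence for an arbitrary weight, which is slightly more general than what the citation is invoked for.
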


\section{Fourier coefficients of functions in $L_2(\TT, \abs{t}^{\lambda}\, dt)$, $\abs{\lambda}<1$.}\label{sect:Main}\noindent
With the aid of Proposition~\ref{lem:latticeEst}, we make our first move toward  the proof of Theorem~\ref{thm:Main}.

\begin{lemma}\label{lem:easyhalf}
Let $0\le \alpha<1$. Then,
\[
J_B[\TB,\HH_{-\alpha}]=[2,\infty]\quad  \text{and}\quad J_H[\TB,\HH_\alpha]=[1,2].
\]
\end{lemma}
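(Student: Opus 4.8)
The plan is to derive the two equalities by pairing the abstract containments furnished by Proposition~\ref{lem:latticeEst} with two one-line endpoint estimates coming from elementary pointwise bounds on the weights. Recall from the introduction that, for $\abs{\lambda}<1$, the system $\TB$ is a semi-normalized Schauder basis of $\HH_\lambda$, hence a semi-normalized $M$-bounded biorthogonal system; moreover, since $\TB$ is a Schauder basis and the ordinary Fourier coefficient functionals are biorthogonal to it and reconstruct each $f\in\HH_\lambda$ through the (norm-convergent) Fourier series, uniqueness of the dual basis identifies $\phi_n^*$ with the Fourier coefficient functional. Because $\HH_\lambda$ is a separable infinite-dimensional Hilbert space, any orthonormal basis realizes it as lattice-isometric to $\ell_2$, which is $2$-convex and $2$-concave. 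Applying Proposition~\ref{lem:latticeEst} with $p=2$ to $\lambda=-\alpha$ and to $\lambda=\alpha$ yields at once the inclusions $J_B[\TB,\HH_{-\alpha}]\subseteq[2,\infty]$ and $J_H[\TB,\HH_\alpha]\subseteq[1,2]$.

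For the reverse inclusions I would exploit that, for $0\le\alpha<1$ and $\abs{t}\le 1/2$, one has $\abs{t}^{-\alpha}\ge 1\ge\abs{t}^{\alpha}$; consequently there are contractive inclusions $\HH_{-\alpha}\hookrightarrow L_2([-1/2,1/2])\hookrightarrow\HH_\alpha$, with $\norm{g}_{\HH_\alpha}\le\norm{g}_{L_2}\le\norm{g}_{\HH_{-\alpha}}$ for every $g$. Since $\TB$ is orthonormal in the unweighted space $L_2([-1/2,1/2])$, Parseval's identity gives $\sum_n\abs{\phi_n^*(f)}^2=\norm{f}_{L_2}^2\le\norm{f}_{\HH_{-\alpha}}^2$, so $\TB$ is $2$-Besselian in $\HH_{-\alpha}$; and $\norm{\sum_n a_n\phi_n}_{\HH_\alpha}^2\le\norm{\sum_n a_n\phi_n}_{L_2}^2=\sum_n\abs{a_n}^2$, so $\TB$ is $2$-Hilbertian in $\HH_\alpha$.

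Finally, since $\ell_r\hookrightarrow\ell_{r'}$ contractively whenever $r\le r'$, the Besselian interval is upward closed and the Hilbertian interval is downward closed; thus $2\in J_B[\TB,\HH_{-\alpha}]$ upgrades to $[2,\infty]\subseteq J_B[\TB,\HH_{-\alpha}]$, and $2\in J_H[\TB,\HH_\alpha]$ upgrades to $[1,2]\subseteq J_H[\TB,\HH_\alpha]$. Combined with the first paragraph, these give the two claimed equalities. There is no serious obstacle here, this being the easy half; the only point demanding care is tracking the favorable direction of the weight comparison, namely that $\abs{t}^{-\alpha}$ inflates the norm near $t=0$ (which makes the Besselian bound free) while $\abs{t}^{\alpha}$ deflates it (which makes the Hilbertian bound free), together with confirming that $\phi_n^*$ really is the Fourier coefficient so that Parseval applies verbatim.
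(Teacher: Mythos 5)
Your proposal is correct and follows essentially the same route as the paper: the upper containments $J_B[\TB,\HH_{-\alpha}]\subseteq[2,\infty]$ and $J_H[\TB,\HH_\alpha]\subseteq[1,2]$ come from Proposition~\ref{lem:latticeEst} applied with $q_0=r_0=2$, and the reverse containments come from the contractive embeddings $\HH_{-\alpha}\subseteq\HH_0\subseteq\HH_\alpha$ together with the fact that $\TB$ is $2$-Besselian and $2$-Hilbertian in $\HH_0$ (Parseval), plus the monotonicity of the two intervals. You merely spell out the Parseval step and the upward/downward closedness that the paper leaves implicit.
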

\begin{proof}
By Proposition~\ref{lem:latticeEst}, it suffices to prove that $\TB$ is a $2$-Besselian basis of $\HH_{-\alpha}$, and a $2$-Hilbertian basis of $\HH_\alpha$. Taking into account that $\TB$ is a $2$-Besselian basis of $\HH_0$, the former assertion is a consequence of the embedding $\HH_{-\alpha}\subseteq \HH_0$. In turn, since  $\TB$  is a $2$-Hilbertian basis of $\HH_0$, the latter assertion follows from the embedding $\HH_0\subseteq \HH_\alpha$.
\end{proof}

The authors of  \cite{GW2014} computed the norm in $\HH_{\lambda}$, $-1<\lambda<1$, of the Dirichlet kernel $(D_m)_{m=0}^\infty$ defined by
\[
D_m=\sum_{n=-m}^m  \tau_n=\sum_{n=0}^{2m} \phi_m=\sum_{n=0}^{2m} \phi_m^\RR, \quad m\in\NN.
\]
For the reader's ease, we record this result of Garrig\'os and Wojtaszczyk that we will use a couple of times.

\begin{lemma}[see \cite{GW2014}*{Lemma 3.7}]\label{lem:Dirichlet}
Let $-1<\lambda<1$. Then
\[
\norm{D_m}_{\HH_\lambda}\approx m^{(1-\lambda)/2},  \quad m\in\NN.
\]
\end{lemma}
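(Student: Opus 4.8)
The plan is to start from the closed form of the Dirichlet kernel,
\[
D_m(t)=\sum_{n=-m}^m e^{2\pi i n t}=\frac{\sin((2m+1)\pi t)}{\sin(\pi t)},\quad t\in[-1/2,1/2]\setminus\{0\},
\]
and to estimate the integral $\norm{D_m}_{\HH_\lambda}^2=\int_{-1/2}^{1/2}\abs{D_m(t)}^2\abs{t}^\lambda\,dt$ directly. First I would replace the denominator by a power of $t$: since $\sin(\pi t)$ is comparable to $\abs{t}$ on $[-1/2,1/2]$ (by concavity, $2\abs{t}\le \sin(\pi\abs{t})\le \pi\abs{t}$), we have $\sin^2(\pi t)\approx t^2$ there, with constants independent of $m$. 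Using the evenness of the integrand, this gives
\[
\norm{D_m}_{\HH_\lambda}^2\approx \int_0^{1/2}\frac{\sin^2((2m+1)\pi t)}{t^{2-\lambda}}\,dt.
\]

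The next step is the scaling substitution $s=(2m+1)t$, writing $N=2m+1$ for brevity. This turns the integral into
\[
\int_0^{1/2}\frac{\sin^2((2m+1)\pi t)}{t^{2-\lambda}}\,dt
= N^{1-\lambda}\int_0^{N/2}\frac{\sin^2(\pi s)}{s^{2-\lambda}}\,ds,
\]
which isolates the desired factor $N^{1-\lambda}\approx m^{1-\lambda}$. It then remains to show that the truncated integral $I_N:=\int_0^{N/2}\sin^2(\pi s)\,s^{-(2-\lambda)}\,ds$ is bounded above and below by positive constants, uniformly for $m\in\NN$; combining this with the previous display yields $\norm{D_m}_{\HH_\lambda}^2\approx m^{1-\lambda}$, and hence the claimed equivalence after taking square roots.

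The heart of the matter, and where the hypothesis $\abs{\lambda}<1$ is used, is the two-sided uniform control of $I_N$. I would argue that the improper integral $\int_0^\infty \sin^2(\pi s)\,s^{-(2-\lambda)}\,ds$ converges to a finite positive constant $C_\lambda$: near $s=0$ the integrand is comparable to $\pi^2 s^{\lambda}$, which is integrable precisely because $\lambda>-1$, while near infinity it is dominated by $s^{-(2-\lambda)}$, which is integrable precisely because $2-\lambda>1$, i.e. $\lambda<1$. Since $I_N$ is nonnegative and nondecreasing in $N$, and $N/2\ge 3/2$ for every $m\ge 1$, one has $0<\int_0^{3/2}\sin^2(\pi s)\,s^{-(2-\lambda)}\,ds\le I_N\le C_\lambda$, giving the required uniform bounds. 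The main obstacle is thus not any single computation but the bookkeeping of this uniform two-sided estimate: one must check that the hypotheses $-1<\lambda<1$ are exactly what guarantee integrability at both endpoints, and that the lower bound (not merely the upper bound) holds uniformly in $m$, which follows from monotonicity of $I_N$ in the upper limit.
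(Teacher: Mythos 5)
Your proof is correct, and it is complete at every point where care is needed: the closed form $D_m(t)=\sin((2m+1)\pi t)/\sin(\pi t)$, the uniform comparison $2\abs{t}\le\sin(\pi\abs{t})\le\pi\abs{t}$ on $[-1/2,1/2]$, the scaling $s=(2m+1)t$ producing the factor $N^{1-\lambda}$, and the two-sided uniform control of the truncated integral $I_N$ (convergence at $0$ from $\lambda>-1$, at $\infty$ from $\lambda<1$, and the lower bound via monotonicity of $I_N$ together with $N/2\ge 3/2$). Note that the paper does not prove this lemma at all: it quotes it verbatim from \cite{GW2014}*{Lemma 3.7}, so there is no internal argument to compare against. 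Your write-up is the natural self-contained proof of the cited fact --- a direct estimation of $\int_{-1/2}^{1/2}\abs{D_m(t)}^2\abs{t}^\lambda\,dt$ by replacing $\sin(\pi t)$ with $\abs{t}$ and rescaling --- and it makes transparent exactly how the hypothesis $-1<\lambda<1$ enters, which the paper's citation leaves implicit. The only cosmetic remark is that the final passage from $N^{1-\lambda}$ to $m^{1-\lambda}$ uses $1-\lambda>0$ and $N=2m+1\approx m$; you use this silently, and it is harmless, but it is the one remaining place where $\lambda<1$ is invoked.
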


As a matter of fact, Lemma~\ref{lem:Dirichlet} provides valuable information on the Besselian and Hilbertian intervals of the trigonometric system in $\HH_\lambda$.

\begin{lemma}\label{lem:DirLB}
Let $0\le \alpha<1$. Then,
\[
J_B[\TB,\HH_\alpha]\subseteq[r_\alpha,\infty]\quad \text{and} \quad J_H[\TB,\HH_{-\alpha}]\subseteq[1,q_\alpha].
\]
\end{lemma}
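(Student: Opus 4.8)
The plan is to use the Dirichlet kernels $(D_m)_{m=1}^\infty$ as test elements, exploiting the fact that their coefficients with respect to $\TB$ are particularly simple: since $D_m=\sum_{n=0}^{2m}\phi_n$, the coordinate sequence $(\phi_n^*(D_m))_{n=0}^\infty$ is the indicator of $\{0,1,\dots,2m\}$, so its $\ell_r$-norm equals $(2m+1)^{1/r}\approx m^{1/r}$ for every $0<r\le\infty$. The only other input needed is the size of $\norm{D_m}_{\HH_\lambda}$, which is supplied by Lemma~\ref{lem:Dirichlet}. Both inclusions then reduce to comparing two explicit powers of $m$ and letting $m\to\infty$.

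For the Besselian inclusion I would argue as follows. Suppose $\TB$ is $r$-Besselian in $\HH_\alpha$, so that the coefficient transform $f\mapsto(\phi_n^*(f))_{n=0}^\infty$ maps $\HH_\alpha$ boundedly into $\ell_r$. Testing on $f=D_m$ gives
\[
m^{1/r}\approx\norm{(\phi_n^*(D_m))_{n=0}^\infty}_{\ell_r}\lesssim\norm{D_m}_{\HH_\alpha}\approx m^{(1-\alpha)/2},
\]
where the last equivalence is Lemma~\ref{lem:Dirichlet} with $\lambda=\alpha$. Letting $m\to\infty$ forces $1/r\le(1-\alpha)/2$, i.e.\ $r\ge 2/(1-\alpha)=r_\alpha$. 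Since the Besselian interval is of the form $[\,\cdot\,,\infty]$ (an $r$-Besselian system is $r'$-Besselian for every $r'\ge r$, as $\ell_r\subseteq\ell_{r'}$ contractively), this yields $J_B[\TB,\HH_\alpha]\subseteq[r_\alpha,\infty]$.

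For the Hilbertian inclusion the roles of the two norms are swapped. Suppose $\TB$ is $q$-Hilbertian in $\HH_{-\alpha}$, so the series transform $(a_n)_{n=0}^\infty\mapsto\sum_n a_n\phi_n$ maps $\ell_q$ boundedly into $\HH_{-\alpha}$. Applying it to the sequence $a=(1,\dots,1,0,\dots)$ with ones in positions $0,\dots,2m$, whose image is exactly $D_m$, and invoking Lemma~\ref{lem:Dirichlet} with $\lambda=-\alpha$, we obtain
\[
m^{(1+\alpha)/2}\approx\norm{D_m}_{\HH_{-\alpha}}=\norm{\textstyle\sum_n a_n\phi_n}_{\HH_{-\alpha}}\lesssim\norm{a}_{\ell_q}\approx m^{1/q}.
\]
Letting $m\to\infty$ gives $(1+\alpha)/2\le 1/q$, i.e.\ $q\le 2/(1+\alpha)=q_\alpha$; since the Hilbertian interval has the form $[1,\,\cdot\,]$, this yields $J_H[\TB,\HH_{-\alpha}]\subseteq[1,q_\alpha]$.

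I do not anticipate a genuine obstacle here: the argument is a direct confrontation of the $r$-Besselian and $q$-Hilbertian definitions with a single extremal family of test elements. The one point deserving care is bookkeeping the exponent in Lemma~\ref{lem:Dirichlet}, namely that passing from $\HH_\alpha$ to $\HH_{-\alpha}$ turns $(1-\lambda)/2$ into $(1+\alpha)/2$, which is precisely what produces the two conjugate endpoints $r_\alpha$ and $q_\alpha$; everything else is the trivial computation of $\norm{D_m}$ in $\ell_r$ and the monotonicity of the two intervals.
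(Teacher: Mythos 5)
Your proposal is correct and follows essentially the same route as the paper: test the $r$-Besselian (resp.\ $q$-Hilbertian) bound on the Dirichlet kernels $D_m$, whose coordinate sequence is an indicator with $\ell_s$-norm $\approx m^{1/s}$, and compare exponents using Lemma~\ref{lem:Dirichlet} to get $1/r\le(1-\alpha)/2$ and $(1+\alpha)/2\le 1/q$. The only difference is cosmetic: your closing appeal to the monotone (interval) structure of $J_B$ and $J_H$ is not needed, since showing that every $r\in J_B[\TB,\HH_\alpha]$ satisfies $r\ge r_\alpha$ (and dually for $J_H$) already gives the stated inclusions.
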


\begin{proof}
Pick $s\in[1,\infty]$ and  suppose that $\TB$ is a $s$-Besselian basis of $\HH_\alpha$ (resp., a $s$-Hilbertian basis of $\HH_{-\alpha}$). Then,
\[
\sup_m \frac{\norm{ \sum_{n=-m}^m \ee_n}_s}{\norm{D_m}_{\HH_\alpha}}<\infty
\; \text{(resp.,}\;
\sup_m \frac{\norm{D_m}_{\HH_{-\alpha}}}{\norm{\sum_{n=-m}^m \ee_n}_s}<\infty \text{).}
\]
Since $\norm{\sum_{n=-m}^m \ee_n}_s\approx m^{1/s}$ for $m\in\NN$, we infer from Lemma~\ref{lem:Dirichlet} that
$1/s\le  (1-\alpha)/2$ (resp., $(1+\alpha)/2\le 1/s$).  Hence, $r_\alpha\le s$ (resp., $s\le q_\alpha$).
\end{proof}

To help the reader to grasp the issue of the optimality of Lemma~\ref{lem:DirLB}, we note that combining the embedding
\[
\HH_{\alpha} \subseteq L_q([-1/2,1/2]),\quad 0<\alpha<1, \;  1\le q<q_\alpha,
\]
which follows from H\"older's inequality, with Hausdorff-Young inequality, and taking into account that $r_\alpha$ and $q_\alpha$ are conjugate exponents, yields $(r_\alpha,\infty]\subseteq  J_B[\TB,\HH_\alpha]$. So, only whether the trigonometric system is a $r_\alpha$-Besselian basis of $\HH_{\alpha}$ is in doubt. To answer this question, we need to introduce some terminology. Let $\EE=\{ a\in\FF \colon \abs{A}=1\}$. The \emph{fundamental function} of  a basis $\XB=(\xx_n)_{n=1}^\infty$ of a Banach space $\XX$ is defined as
\[
\udf_m[\XB,\XX]= \sup\left\{  \norm{\Ind_{\varepsilon,A}[\XB]} \colon \varepsilon\in\EE^A,\; \abs{A}  \le m\right\},
\]
where, for $A\subseteq\NN$ finite and scalars $\varepsilon=(\varepsilon_n)_{n\in A}\in\EE^A$,
\[
\Ind_{\varepsilon,A}[\XB,\XX]=\sum_{n\in A} \varepsilon_n\, \xx_n.
\]

Theorem~\ref{thm:FE} below is the last step on our route toward proving Theorem~\ref{thm:Main}.

\begin{theorem}\label{thm:FE}
Let $0<\alpha<1$.
\begin{enumerate}[label=(\roman*),leftmargin=*, widest=ii]
\item The Fourier coefficient transform
\[
f\mapsto \widehat{f}=\left(\int_{-1/2}^{1/2} f(t) e^{-2\pi i  n t }\, dt\right)_{n=-\infty}^\infty
\]
is a bounded operator from $\HH_\alpha$ into $\ell_{r_\alpha,2}$.

\item\label{part:Hilbertian}The Fourier series transform
\[
(a_n)_{n=-\infty}^\infty\mapsto \sum_{n=-\infty}^\infty a_n \, \tau_n
\]
is a bounded operator from $\ell_{q_\alpha,2}$ into $\HH_{-\alpha}$.
\end{enumerate}
\end{theorem}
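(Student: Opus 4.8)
The two statements are dual/conjugate to each other, so the plan is to prove part (i) and then derive part~\ref{part:Hilbertian} by a duality argument. Let me first sketch my approach to part (i): the claim is that $f\mapsto\widehat f$ maps $\HH_\alpha=L_2(\ww_\alpha,\CC)$ boundedly into the Lorentz space $\ell_{r_\alpha,2}$, where $r_\alpha=2/(1-\alpha)$ and the weight is $\ww_\alpha(t)=\abs t^{\alpha}$. The natural tool here is the theory of weighted norm inequalities for the Fourier transform. Indeed, the target exponent $r_\alpha$ and the weight exponent $\alpha$ are linked by exactly the Pitt-type relation $\alpha = 1-2/r_\alpha$, which is the signature of a Pitt / Hardy--Littlewood inequality for Fourier coefficients: for $2<r<\infty$ one has
\[
\left(\sum_{n} \abs{\widehat f(n)}^{r}\right)^{1/r}\lesssim \left(\int \abs{f(t)}^{2}\,\abs{t}^{r-2}\,dt\right)^{1/2}
\]
after the appropriate normalization. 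Since $\alpha r_\alpha \ne r_\alpha-2$ in general, one does not get the clean $\ell_{r_\alpha}$ bound directly; this gap is precisely why the sharp target is the \emph{Lorentz} space $\ell_{r_\alpha,2}$ rather than $\ell_{r_\alpha}$, and it is the reason the statement is phrased with the second index $2$. The cleanest route is therefore to invoke a sharp weighted Hardy--Littlewood theorem for Fourier series (the discrete/periodic analogue of Pitt's inequality), which in its Lorentz-refined form gives exactly the bound $\HH_\alpha\to\ell_{r_\alpha,2}$.

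\emph{Execution of part (i).}
First I would reduce from the circle to a model inequality on $\RR$ or directly cite the periodic Hardy--Littlewood inequality with power weights. Concretely, the plan is: (a) decompose $f$ dyadically in the weight, writing $[-1/2,1/2]$ as the union of the annuli $I_j=\{2^{-j-1}<\abs t\le 2^{-j}\}$ so that $\ww_\alpha$ is comparable to the constant $2^{-j\alpha}$ on $I_j$; (b) on each annulus control $\widehat{f\Ind_{I_j}}$ in $\ell_{r_\alpha}$ by the Hausdorff--Young inequality (valid since $r_\alpha>2$ is the conjugate of $q_\alpha<2$) together with the elementary $L_{q_\alpha}(I_j)\subseteq$-bound coming from Hölder and the measure of $I_j$; (c) reassemble the pieces. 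The key technical point is that step (c) must be carried out at the level of Lorentz norms: the superposition of the dyadic contributions, each a near-rectangular $\ell_{r_\alpha}$ block, sums to a controlled $\ell_{r_\alpha,2}$ norm exactly because the second Lorentz index $2$ matches the exponent of the $L_2$-weight on the function side. This is a real-interpolation bookkeeping step, and I expect it to be the main obstacle: one has to track the interaction between the dyadic scale in $t$ and the frequency scale in $n$ so that the off-diagonal cross terms are summable, and the endpoint subtlety (why $\ell_{r_\alpha,2}$ and not $\ell_{r_\alpha}$, nor the finer $\ell_{r_\alpha,r_\alpha}=\ell_{r_\alpha}$) lives precisely here.

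\emph{Deriving part (ii) by duality.}
For part~\ref{part:Hilbertian}, I would not repeat the argument but instead dualize. The Fourier series transform $(a_n)\mapsto\sum_n a_n\tau_n$ from $\ell_{q_\alpha,2}$ into $\HH_{-\alpha}$ is, up to the standard pairing $\langle f,g\rangle=\int f\bar g\,dt$ (note: \emph{unweighted} Lebesgue pairing, which identifies the dual of $\HH_{-\alpha}=L_2(\ww_{-\alpha})$ with $\HH_{\alpha}=L_2(\ww_{\alpha})$ via $g\mapsto \ww_{-\alpha}g$), the adjoint of the Fourier coefficient transform $\HH_\alpha\to\ell_{r_\alpha,2}$ from part (i). One checks that $q_\alpha=2/(1+\alpha)$ and $r_\alpha=2/(1-\alpha)$ are conjugate, $1/q_\alpha+1/r_\alpha=1$, and that $\HH_{-\alpha}$ and $\HH_\alpha$ are mutually dual under the unweighted pairing, since $\ww_\alpha\ww_{-\alpha}=1$. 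The dual of the Lorentz space $\ell_{r_\alpha,2}$ is $\ell_{q_\alpha,2}$ (for $1<r_\alpha<\infty$ the Lorentz space is reflexive with $(\ell_{r,s})^*=\ell_{r',s'}$, and here $2'$ conjugate to the first index is irrelevant because $s=2$ self-pairs only through the first index adjustment—care is needed, so I would verify $(\ell_{r_\alpha,2})^*=\ell_{q_\alpha,2}$ explicitly). Thus the adjoint of a bounded operator $\HH_\alpha\to\ell_{r_\alpha,2}$ is bounded $\ell_{q_\alpha,2}\to\HH_{-\alpha}$, which is exactly the assertion, once one verifies that this adjoint acts on the unit vectors $\ee_n$ by sending them to $\tau_n$ (a direct computation from the definition of $\widehat f$ and the pairing). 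The only delicate verification in this half is the identification of the Lorentz dual with the correct second index and the correct action of the adjoint on basis vectors; both are routine but must be stated carefully to avoid an index slip.
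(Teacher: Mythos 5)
Your duality half is sound: the paper uses exactly the same duality (with the unweighted pairing identifying $\HH_\lambda^*$ with $\HH_{-\lambda}$ and $(\ell_{r_\alpha,2})^*$ with $\ell_{q_\alpha,2}$), only in the opposite direction, since it proves part (ii) first and deduces part (i). The genuine gap is in the substantive half, your part (i). Your step (c) --- reassembling the dyadic pieces $\widehat{f\Ind_{I_j}}$ into an $\ell_{r_\alpha,2}$ bound --- is not ``real-interpolation bookkeeping''; it is the whole theorem, and as sketched it fails. Hausdorff--Young on each annulus does give $\sum_j \|\widehat{f\Ind_{I_j}}\|_{\ell_{r_\alpha}}^2 \lesssim \|f\|_{\HH_\alpha}^2$, but the pieces $\widehat{f\Ind_{I_j}}$ have no frequency localization whatsoever (each is spread over all of $\ZZ$), so there is no almost-orthogonality that lets you pass from square-summability of the pieces to a bound on $\|\sum_j \widehat{f\Ind_{I_j}}\|_{\ell_{r_\alpha,2}}$; the naive triangle inequality loses a divergent factor. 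Your fallback --- citing ``a sharp weighted Hardy--Littlewood theorem for Fourier series in its Lorentz-refined form'' --- is circular: that Lorentz-refined Pitt-type inequality \emph{is} the statement to be proved (classical Pitt only gives the weaker target $\ell_{r_\alpha}$, as you yourself observe).

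For contrast, the paper closes this gap by a different mechanism, and it is worth seeing why it works where your sketch stalls. It fixes an auxiliary $\beta$ with $\alpha<\beta<1$, uses the decay $|\widehat{\ww_{-\beta}}(n)|\lesssim (1+|n|)^{\beta-1}$ to bound the fundamental function $\udf_m[\TB,\HH_{-\beta}]\lesssim m^{1/q_\beta}$, invokes a general squeeze-type result (\cite{AABW2021}*{Corollary 9.13}) converting this into the endpoint statement that the series transform is bounded from $\ell_{q_\beta,1}$ into $\HH_{-\beta}$, and then real-interpolates this against the trivial bound $\ell_2\to\HH_0$ with $\theta=\alpha/\beta$. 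The interpolation parameter being strictly interior (possible only because $\beta>\alpha$) is what produces the sharp second index $2$ in $\ell_{q_\alpha,2}$; this replaces the summation-of-pieces step you could not carry out. If you want to salvage your own route, you would need an actual theorem of Stein--Weiss type (interpolation with change of measure) or a rearrangement argument à la Sagher in place of step (c); without one of these, the proposal does not constitute a proof.
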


\begin{proof}
Let $-1<\lambda<1$. We have $\HH_\lambda^*=\HH_{-\lambda}$ via the dual pairing
\[
(f,g)\mapsto \int_{-1/2}^{1/2} f(t)g(t)\, dt, \quad f\in \HH_\lambda, \; g\in \HH_{-\lambda}.
\]
Moreover, $\TB$, regarded as a basis of $\HH_{-\lambda}$, is the dual basis of the own system $\TB$ regarded as a basis of $\HH_\lambda$. Consequently, by duality, it suffices to prove \ref{part:Hilbertian}. To that end, we pick $0<\alpha<\beta<1$. Given $A\subseteq\ZZ$ we set
\[
J_n[A]=\{1+\abs{n-k}\colon k\in A\},\quad n\in A.
\]
Since, for some constant $C_\beta$,
\begin{equation}\label{eq:GWEstimate}
\abs{\widehat{\ww_{-\beta}}(n)}\le  \frac{C_\beta}{(1+\abs{n})^{1-\beta}}, \quad n\in\ZZ,
\end{equation}
(see \cite{GW2014}*{Lemma A.2}), for every $m\in\NN$, every $A\subseteq\ZZ$  with $\abs{A}\le m $, and every $\varepsilon=(\varepsilon_n)_{n\in A}\in\TT^A$ we have
\begin{align*}
\norm{\Ind_{\varepsilon,A}[\TB,\HH_{-\beta}]}^2
&=\sum_{(n,k)\in A^2}\varepsilon_n \overline{\varepsilon_k} \widehat{\ww_{-\beta}}(k-n)\\
&\le C_\beta \sum_{n\in A} \sum_{k\in A} \frac{1}{(1+\abs{n-k})^{1-\beta}}\\
&\le 2 C_\beta \sum_{n\in A} \sum_{j\in J_n[A]} \frac{1}{j^{1-\beta}}\\
&\le 2  C_\beta m \sum_{j=1}^m \frac{1}{j^{1-\beta}}\\
&\le \frac{2C_\beta}{\beta} m^{1+\beta}.
\end{align*}
We have obtained
\[
\udf_m[\TB, \HH_{-\beta}]\le \sqrt{\frac{2 C_\beta} {\beta}} m^{1/q_\beta}, \quad m\in\NN.
\]
Therefore, by \cite{AABW2021}*{Corollary 9.13}, there is $0<r\le 1$ such that the Fourier series transform is a bounded operator from $\ell_{p_\beta,r}$ into $\HH_{-\beta}$. In fact, since $\HH_{-\beta}$ is a Banach space, $r=1$.

By orthogonality, the Fourier series transform is a bounded operator from  $\ell_2$ into $\HH_0$.  Pick $0<\theta<1$. By interpolation (by means of the real method)  the Fourier series transform is a bounded operator from $\ell_{q,2}$ into $\HH_{-\gamma}$, where
\[
\frac{1}{q}= \frac{\theta}{q_\beta} + \frac{ 1-\theta}{2}, \quad \gamma=\theta\beta.
\]
(see \cite{BerLof1976}*{Theorems 5.3.1 and 5.4.1}). If we choose $\theta=\alpha/\beta$, then $\gamma=\alpha$ and
\[
\frac{1}{q}= \frac{\alpha(1+\beta)}{2\beta} + \frac{ \beta-\alpha}{2\beta}=\frac{1+\alpha}{2},
\]
that is, $q=q_\alpha$.
\end{proof}

\begin{proof}[Proof of Theorem~\ref{thm:Main}]
By  Lemma~\ref{lem:gathered} and the identities \eqref{eq:cossin} and \eqref{eq:exp}, it suffices to prove the assertions involving the trigonometric system $\TB$. Since $\ell_p\subseteq \ell_{p,2}$ for $p\le 2$, and $\ell_{p,2}\subseteq \ell_p$ for $p\ge 2$, Theorem~\ref{thm:FE} gives that
$r_\alpha\in J_B[\TB,\HH_\alpha]$ and $q_\alpha\in J_H[\TB,\HH_{-\alpha}]$.  In light of Lemma~\ref{lem:DirLB} and Lemma~\ref{lem:easyhalf},  the proof is over.
\end{proof}

Theorem~\ref{thm:Main} says, in particular, that $q_\alpha$ is the optimal index $q$ such that the trigonometric system in $\HH_{-\alpha}$ if $q$-Hilbertian, and $r_\alpha$ is the optimal index $r$ such that the trigonometric system in $\HH_\alpha$ if $r$-Besselian. What remains of this section is devoted to proving that the estimates obtained in  Theorem~\ref{thm:FE} are also optimal in the `secondary' index. To that end, we  need to compute, up to equivalence, the Fourier coefficients of  the power weight $\ww_\lambda$, $-1<\lambda<0$.
\begin{lemma}[cf.\ Equation \eqref{eq:GWEstimate}]\label{lem:WeightCoef}
Let $0<\alpha<1$. Then,
\[
\widehat{\ww_{-\alpha}}(n)\approx \frac{1}{(1+\abs{n})^{1-\alpha}}, \quad n\in\ZZ.
\]
\end{lemma}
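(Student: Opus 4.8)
The upper bound $\widehat{\ww_{-\alpha}}(n)\lesssim(1+\abs{n})^{-(1-\alpha)}$ is already available: it is precisely \eqref{eq:GWEstimate} with $\beta=\alpha$. So the entire task is to establish the matching lower bound $\widehat{\ww_{-\alpha}}(n)\gtrsim(1+\abs{n})^{-(1-\alpha)}$, and the plan is to compute the coefficient precisely enough to see that no cancellation destroys the expected size. First I would record two reductions. Since $\ww_{-\alpha}$ is even and real-valued, the coefficient $\widehat{\ww_{-\alpha}}(n)=\int_{-1/2}^{1/2}\abs{t}^{-\alpha}e^{-2\pi int}\,dt$ is real and even in $n$ and equals $2\int_0^{1/2}t^{-\alpha}\cos(2\pi nt)\,dt$, so it suffices to treat $n\ge1$. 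The case $n=0$ is immediate, as $\widehat{\ww_{-\alpha}}(0)=2(1/2)^{1-\alpha}/(1-\alpha)$ is a positive constant matching $(1+0)^{-(1-\alpha)}=1$.

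For $n\ge1$ I would substitute $u=2\pi nt$ to pull out the expected power of $n$, obtaining
\[
\widehat{\ww_{-\alpha}}(n)=2(2\pi)^{\alpha-1}\,n^{-(1-\alpha)}\,F(\pi n),\qquad F(x):=\int_0^x u^{-\alpha}\cos u\,du.
\]
The lower bound then reduces to the claim that $F(\pi n)\approx1$ uniformly in $n\ge1$, i.e.\ that this truncated oscillatory integral stays between two positive constants independent of $n$.

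To prove that I would slice $F(\pi n)$ into consecutive half-periods of the cosine. Writing $F(\pi n)=\sum_{k=0}^{n-1}I_k$ with $I_k=\int_{k\pi}^{(k+1)\pi}u^{-\alpha}\cos u\,du$, the substitution $u=k\pi+v$ gives $I_k=(-1)^k a_k$, where $a_k=\int_0^\pi(k\pi+v)^{-\alpha}\cos v\,dv$. Folding $[0,\pi]$ about its midpoint (replacing $v$ by $\pi-v$ on $[\pi/2,\pi]$) rewrites
\[
a_k=\int_0^{\pi/2}\cos v\,\bigl[(k\pi+v)^{-\alpha}-((k+1)\pi-v)^{-\alpha}\bigr]\,dv,
\]
which is nonnegative because $t\mapsto t^{-\alpha}$ is decreasing and $\cos v\ge0$ on $[0,\pi/2]$. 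Convexity of $t\mapsto t^{-\alpha}$ forces $x\mapsto x^{-\alpha}-(x+c)^{-\alpha}$ to be decreasing for each fixed gap $c>0$, hence $a_k$ is strictly decreasing in $k$, and clearly $a_k\to0$. Thus $F(\pi n)=\sum_{k=0}^{n-1}(-1)^k a_k$ is a partial sum of an alternating series with strictly decreasing nonnegative terms, so $0<a_0-a_1\le F(\pi n)\le a_0$ for every $n\ge1$. This yields $F(\pi n)\approx1$ uniformly and completes the argument.

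The hard part is exactly this last step: the estimate \eqref{eq:GWEstimate} bounds $\abs{F(\pi n)}$ only from above, and the lower bound requires ruling out cancellation, that is, a uniform positive lower bound on the oscillatory integral. The half-period decomposition together with the monotonicity-from-convexity of the power function is what supplies it. As a sanity check one can note $F(\pi n)\to\int_0^\infty u^{-\alpha}\cos u\,du=\Gamma(1-\alpha)\sin(\pi\alpha/2)>0$, which confirms the limit is positive; but the alternating-series estimate is preferable because it handles all $n\ge1$ simultaneously rather than treating the finitely many small $n$ by hand.
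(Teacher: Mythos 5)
Your proof is correct and follows essentially the same route as the paper: pull out the factor $n^{\alpha-1}$ by rescaling, slice the resulting truncated oscillatory integral $\int_0^{\pi n}u^{-\alpha}\cos u\,du$ into length-$\pi$ blocks, and use the Leibniz alternating-series bound to get uniform two-sided positive constants. If anything, yours is the more careful write-up: your folding of each block about its midpoint correctly accounts for the sign change of the cosine inside each block, whereas the paper's displayed identity assigns the single sign $(-1)^{k-1}$ to the integral of $\abs{\cos(\pi x)}x^{-\alpha}$ over $[k-1,k]$ even though $\cos(\pi x)$ changes sign there, so the paper's decomposition only becomes literally correct after a repair of exactly the kind you carry out.
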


\begin{proof}
Pick $n\in\NN$. We have $\widehat{\ww_{-\alpha}}(-n)=\widehat{\ww_{-\alpha}}(n)$ and
\[
\widehat{\ww_{-\alpha}}(n)=2\int_0^{1/2} \frac{ \cos(2\pi n t) }{t^\alpha}\, dt=\frac{2^\alpha}{n^{1-\alpha}} A_n,
\]
where
\[
A_n=\int_0^{n} \frac{ \cos(\pi x) }{x^\alpha}\, dx= \sum_{k=1}^n (-1)^{k-1} \int_{k-1}^{k} \frac{\abs{\cos(\pi x)}}{ x^{\alpha}} \, dx.
\]
By Leibniz test for alternating series, $A_n>0$  for all $n\in\NN$, and there exists $\lim_n A_n\in(0,\infty)$.
\end{proof}

\begin{proposition}
Let $0<\alpha<1$,  $0<p<\infty$, and $0<q\le \infty$.
\begin{enumerate}[label=(\roman*),leftmargin=*, widest=ii]
\item The Fourier coefficient transform is a bounded operator from $\HH_\alpha$ into $\ell_{p,s}$ if and only if
$p>r_\alpha$, or $p=r_\alpha$ and $s\ge 2$.
\item The Fourier series transform is a bounded operator from $\ell_{p,s}$ into $\HH_{-\alpha}$ if and only if $ p< q_\alpha$, or $p= q_\alpha$ and $s\le 2$.
\end{enumerate}
\end{proposition}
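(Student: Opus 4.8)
The plan is to prove statement (ii) in full and deduce (i) from it by duality. Since $\HH_\alpha^*=\HH_{-\alpha}$ under the pairing $(f,g)\mapsto\int f g$ and $\TB$ is its own dual basis, the Fourier coefficient transform into $\ell_{p,s}$ is the adjoint of the Fourier series transform out of $\ell_{p',s'}$, exactly the duality exploited in the proof of Theorem~\ref{thm:FE}. The endpoint relevant to (i) is $p=r_\alpha>2$, so $p'=q_\alpha\in(1,2)$; second-index monotonicity reduces the sub-endpoint range to $s\ge 1$, where $(\ell_{p,s})^*=\ell_{p',s'}$ and the four conditions correspond because $r_\alpha'=q_\alpha$ and $s\ge 2\Leftrightarrow s'\le 2$. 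The sufficiency halves of both statements are immediate from Theorem~\ref{thm:FE}(ii) together with the standard Lorentz embeddings $\ell_{q_\alpha,s}\hookrightarrow\ell_{q_\alpha,2}$ for $s\le 2$ and $\ell_{p,s}\hookrightarrow\ell_{q_\alpha,2}$ for $p<q_\alpha$ (any $s$), after composition with the series transform.

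For the necessity in (ii) there are two regimes. The failure for $p>q_\alpha$ is a primary-index phenomenon that I would settle with Lemma~\ref{lem:DirLB}: were the series transform bounded on $\ell_{p,s}$, then precomposing with the inclusion $\ell_{p_1}\hookrightarrow\ell_{p,s}$, valid for every $p_1<p$, would make $\TB$ a $p_1$-Hilbertian basis of $\HH_{-\alpha}$; but $J_H[\TB,\HH_{-\alpha}]\subseteq[1,q_\alpha]$ forces $p_1\le q_\alpha$ for all $p_1<p$, whence $p\le q_\alpha$, a contradiction. The same embedding argument disposes of the primary-index failures of (i) directly, so the quasi-Banach range $p\le 1$, where Lorentz duality is delicate, never requires duality.

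The crux is the secondary-index failure at the endpoint $p=q_\alpha$, $s>2$, which I would attack with an explicit test sequence using the two-sided estimate of Lemma~\ref{lem:WeightCoef}. Set $a_n=n^{-1/q_\alpha}(\log n)^{-\gamma}$ for $2\le n\le N$ and $a_n=0$ otherwise, with $\gamma$ to be chosen. Since $\langle\tau_n,\tau_k\rangle_{\HH_{-\alpha}}=\widehat{\ww_{-\alpha}}(k-n)$, positivity of the coefficients, Lemma~\ref{lem:WeightCoef}, and the comparison $a_k\approx a_n$ on the block $\abs{n-k}\le n/2$ give
\[
\Big\|\sum_n a_n\tau_n\Big\|_{\HH_{-\alpha}}^2\gtrsim\sum_{n,k}a_na_k(1+\abs{n-k})^{-(1-\alpha)}\gtrsim\sum_{2\le n\le N/2}a_n^2\sum_{\abs{n-k}\le n/2}(1+\abs{n-k})^{-(1-\alpha)}.
\]
The inner sum is $\approx n^{\alpha}$, and the identity $-2/q_\alpha+\alpha=-1$ collapses the weight to $a_n^2 n^{\alpha}\approx n^{-1}(\log n)^{-2\gamma}$, so $\|\sum_n a_n\tau_n\|_{\HH_{-\alpha}}^2\gtrsim\sum_{2\le n\le N/2}n^{-1}(\log n)^{-2\gamma}\to\infty$ precisely when $\gamma\le 1/2$. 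On the other hand $\|(a_n)\|_{\ell_{q_\alpha,s}}^s\approx\sum_n(\log n)^{-\gamma s}/n$ stays bounded as $N\to\infty$ exactly when $\gamma s>1$. For any $s>2$ the window $1/s<\gamma\le 1/2$ is nonempty, so a choice of $\gamma$ there produces finitely supported sequences, uniformly bounded in $\ell_{q_\alpha,s}$, whose images blow up in $\HH_{-\alpha}$; hence the series transform is unbounded. The main obstacle is this single computation: extracting the sharp $n^{\alpha}$ growth of the near-diagonal convolution sum, which is where the exact exponent in Lemma~\ref{lem:WeightCoef} (not merely the upper bound \eqref{eq:GWEstimate}) is indispensable, and checking that the two competing thresholds $\gamma\le 1/2$ (for blow-up) and $\gamma>1/s$ (for membership) overlap exactly at $s=2$. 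Everything else is bookkeeping with Lorentz embeddings and duality.
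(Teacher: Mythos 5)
Your proposal is correct, and its skeleton is exactly the paper's: sufficiency from Theorem~\ref{thm:FE} plus the standard Lorentz embeddings, an explicit trigonometric test family estimated through Lemma~\ref{lem:WeightCoef} for the endpoint necessity on the series-transform side, and duality (after reducing to $1\le s<2$, where $(\ell_{r_\alpha,s})^*=\ell_{q_\alpha,s'}$ is legitimate) to transfer that necessity to the coefficient-transform side --- the paper performs the same transfer, equally tersely, modulo a typo about which assertion is deduced from which. The one genuine difference is the endpoint computation. The paper tests with the undamped polynomials $f_m=\sum_{n=1}^m n^{-1/q_\alpha}\tau_n$ and proves the two-sided estimate $\norm{f_m}_{\HH_{-\alpha}}\approx H_m^{1/2}$ by comparing $A_n=\sum_{k=1}^n k^{-(1+\alpha)/2}(1+n-k)^{\alpha-1}$ with a beta-function integral; boundedness from $\ell_{q_\alpha,s}$ then forces $H_m^{1/2}\lesssim H_m^{1/s}$, i.e.\ $s\le 2$, so a single test family settles every $s$ at once. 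You instead damp by $(\log n)^{-\gamma}$ with $\gamma\in(1/s,1/2]$ chosen per $s$, which lets you get away with only a one-sided, near-diagonal bound $\sum_{\abs{n-k}\le n/2}(1+\abs{n-k})^{\alpha-1}\approx n^{\alpha}$ --- lighter than the paper's beta-integral asymptotics --- at the price of tracking the two thresholds $\gamma s>1$ and $2\gamma\le 1$ (and reading $\gamma s>1$ as $\gamma>0$ when $s=\infty$). Both computations hinge on the point you correctly flag: Lemma~\ref{lem:WeightCoef} supplies positivity and a matching lower bound for $\widehat{\ww_{-\alpha}}$, which the upper estimate \eqref{eq:GWEstimate} alone cannot. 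Your separate treatment of the primary index via Lemma~\ref{lem:DirLB} and the embeddings $\ell_{p_1}\subseteq\ell_{p,s}$, $p_1<p$, is sound but strictly speaking redundant: as in the paper, the case $p>q_\alpha$ (resp.\ $p<r_\alpha$) already follows from the endpoint claim by embedding $\ell_{q_\alpha,s_1}\subseteq\ell_{p,s}$ with $s_1>2$.
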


\begin{proof}
Since $\ell_{p_1,s_1}\subseteq \ell_{p_2,s_2}$ if $p_1<p_2$, or $p_1=p_2$ and $s_1\le s_2$, in light of Theorem~\ref{thm:FE}  it suffices to prove that
\begin{itemize}[leftmargin=*]
\item if Fourier series transform is a bounded operator from $\ell_{p_\alpha,s}$ into $\HH_{-\alpha}$, then $s\le 2$; and that
\item if the coefficient transform is bounded from $\HH_\alpha$ into $\ell_{q_\alpha,s}$, then $s\ge 2$.
\end{itemize}

Since the former assertion can be deduced from the latter by duality, it suffices to prove the latter one.

Let $(f_m)_{m=1}^\infty$ be the sequence of trigonometric polynomials defined by
\[
f_m(t)=\sum_{n=1}^m \frac{1}{n^{1/q_\alpha}} e^{2\pi i n t}, \quad \frac{-1}{2} \le t \le \frac{1}{2}, \; m\in\NN.
\]
By Lemma~\ref{lem:WeightCoef}, for $m\in\NN$ we have
\begin{align*}
\norm{f_m}^2_{\HH_{-\alpha}}
&=\sum_{1\le n,k\le m} \frac{\widehat{\ww_{-\alpha}}(k-n)}{(nk)^{(1+\alpha)/2}}
\approx \sum_{n=1}^m\sum_{k=1}^n \frac{(1+n-k)^{\alpha-1}}{ (nk)^{(1+\alpha)/2}}\\
&=\sum_{n=1}^m \frac{A_n}{n^{(1+\alpha)/2}},\\
\end{align*}
where
\[
A_n=\sum_{k=1}^n  \frac{1}{k^{(1+\alpha)/2}} \frac{1}{(1+n-k)^{1-\alpha}}.
\]
Set
\begin{align*}
B_n&=\int_0^n \frac{dx}{x^{(1+\alpha)/2} (n-x)^{1-\alpha}}, \; n\in\NN, \; \text{and}\\
R_n&=-\frac{1}{n^{1-\alpha}}-\frac{1}{n^{(1+\alpha)/2}} +\frac{2/(1-\alpha)}{(n-1)^{1-\alpha}}+\frac{1/\alpha}{(n-1)^{(1+\alpha)/2}}, \; n\in\NN,\; n\ge 2.
\end{align*}
We have
\begin{align*}
B_n&=\frac{\beta((1-\alpha)/2,\alpha)}{n^{(1-\alpha)/2}},\; n\in\NN,\\
A_n&\le B_n\le A_n+R_n, \; n\in\NN, \; n\ge 2,\; \text{and}\\
0&=\lim_n  n^{(1-\alpha)/2} R_n.
\end{align*}
We infer that $A_n\approx n^{-(1-\alpha)/2}$ for $n\in\NN$. Consequently,
\begin{equation}\label{eq:SecondaryEstimate}
\norm{f_m}_{\HH_{-\alpha}}\approx H_m^{1/2},\quad m\in\NN.
\end{equation}

Suppose that the Fourier series transform is a bounded operator from $\ell_{q_\alpha,s}$ into $\HH_{-\alpha}$. Combining
\eqref{eq:SecondaryEstimate} with \eqref{eq:LorentzNorm} gives $s\le 2$.
\end{proof}

\section{Conditionality parameters of Schauder bases}\label{sect:CPSchauder}\noindent
The trigonometric system $\TB$, regarded as a sequence in $\HH_\lambda$, $0<\abs{\lambda}<1$,  is the first example of a conditional Schauder basis of a Hilbert space arisen in the literature (see \cite{Gelbaum1951}). In hindsight, that  $\TB$ is not unconditional can be deduced from combining the papers \cite{KotheToeplitz1934}, where it is proved that every semi-normalized unconditional basis of $\ell_2$ is equivalent to its unit vector system, and \cite{Altman1949}, where it is proved that  $\TB$, regarded as a sequence in $\HH_\lambda$, is not equivalent to the unit vector system of $\ell_2$. Notice that the last-mentioned result can be deduced from Theorem~\ref{thm:Main}, and also from Lemma~\ref{lem:Dirichlet}. So, It shouldn't be surprising that Theorem~\ref{thm:Main} enables us to move forward with the theory of conditional bases.
\begin{theorem}\label{thm:AA}
For each $1<q\le 2 \le r<\infty$ there is a $q$-Hilbertian $r$-Besselian Schauder basis  $\XB$ of $\ell_2$ with
\[
\unc_m[\XB,\ell_2]\approx \aunc_m[\XB,\ell_2] \approx m^{1/q-1/r}, \quad m\in\NN.
\]
Moreover, $\XB$ is not $\ell_{q_1}$-Hilbertian for any $q_1>q$ nor $\ell_{r_1}$-Besselian for any $r_1<r$.
\end{theorem}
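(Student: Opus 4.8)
The plan is to realize $\XB$ as the rotation of a direct sum of two copies of the trigonometric system placed in two different power-weighted Hilbert spaces, chosen so that one copy pins down the Hilbertian index and the other the Besselian index, while the rotation manufactures the conditionality. Concretely, I would set $\alpha=2/q-1$ and $\beta=1-2/r$, so that $\alpha,\beta\in[0,1)$ and, in the notation of Theorem~\ref{thm:Main}, $q_\alpha=q$ and $r_\beta=r$. Let $\OB$ be $\TB$ regarded as a biorthogonal system in $\HH_{-\alpha}$, let $\RB$ be $\TB$ regarded as a biorthogonal system in $\HH_\beta$, and put $\XB=\OB\diamond\RB$. Since $\HH_{-\alpha}\oplus\HH_\beta$ is a separable Hilbert space, it is isomorphic to $\ell_2$, and $\XB$ is a Schauder basis of it by the remarks on rotations in Section~\ref{sect:preliminary}.

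First I would settle the two intervals. By Theorem~\ref{thm:Main}(i)--(ii), $\OB$ is $q$-Hilbertian (as $q=q_\alpha$) and $r$-Besselian (as $r\in[2,\infty]$), while by Theorem~\ref{thm:Main}(iii)--(iv), $\RB$ is $q$-Hilbertian (as $q\in[1,2]$) and $r$-Besselian (as $r=r_\beta$). Since $\ell_q$ and $\ell_r$ are lattice isomorphic to their squares, Lemma~\ref{lem:gathered}(ii) yields that $\OB\oplus\RB$, and hence its rotation $\XB$, is both $\ell_q$-Hilbertian and $\ell_r$-Besselian.

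For the conditionality, the upper bound is immediate: Lemma~\ref{lem:BHCC} together with \eqref{eq:Comlplq} gives $\unc_m[\XB,\ell_2]\lesssim\dom_m[\ell_q,\ell_r]=m^{1/q-1/r}$. The matching lower bound is where the rotation pays off. Evaluating $\dom_m[\OB,\HH_{-\alpha},\RB,\HH_\beta]$ on the Dirichlet kernel $D_m$ and invoking Lemma~\ref{lem:Dirichlet} gives
\[
\dom_m[\OB,\HH_{-\alpha},\RB,\HH_\beta]\ge\frac{\norm{D_m}_{\HH_{-\alpha}}}{\norm{D_m}_{\HH_\beta}}\approx\frac{m^{(1+\alpha)/2}}{m^{(1-\beta)/2}}=m^{(\alpha+\beta)/2}=m^{1/q-1/r}.
\]
Because $D_m$ is supported on the first $2m+1$ vectors of $\TB$, i.e.\ on $\NN[2m+1]$, the same computation bounds $\adom_m$ from below, so Lemma~\ref{lem:ccdom} yields $\aunc_m[\XB,\ell_2]\gtrsim m^{1/q-1/r}$; since $\aunc_m\le\unc_m$, all three quantities are comparable to $m^{1/q-1/r}$. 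Optimality of the indices then follows from Corollary~\ref{cor:OptConditions}(ii) with $\UU_1=\ell_q$ and $\UU_2=\ell_r$: if $q_1>q$, then $\dom_m[\ell_{q_1},\ell_r]/\dom_m[\ell_q,\ell_r]=m^{1/q_1-1/q}\to 0$, so $\XB$ is either not $\ell_{q_1}$-Hilbertian or not $\ell_r$-Besselian, and being $\ell_r$-Besselian it cannot be $\ell_{q_1}$-Hilbertian; symmetrically, for $r_1<r$ the ratio $m^{1/r-1/r_1}\to 0$ forces $\XB$ not to be $\ell_{r_1}$-Besselian.

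The main obstacle is the choice of construction rather than any individual estimate: a bare direct sum $\OB\oplus\RB$ only produces conditionality $\max\{m^{\alpha/2},m^{\beta/2}\}$ by \eqref{eq:UNCCDS}, so the rotation is essential to combine the two weights into the larger exponent $m^{(\alpha+\beta)/2}$. The arithmetic that makes the lower bound from Lemma~\ref{lem:ccdom} meet the upper bound from Lemma~\ref{lem:BHCC} is precisely the identity $1/q_\alpha-1/r_\beta=(\alpha+\beta)/2$, and it is this identity that dictates the choice of $\alpha$ and $\beta$.
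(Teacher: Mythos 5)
Your proposal is correct and follows essentially the same route as the paper's proof: rotate the direct sum of two copies of the trigonometric system sitting in $\HH_{-\alpha}\oplus\HH_{\beta}$, obtain the lower bound for $\aunc_m$ from Lemma~\ref{lem:ccdom} combined with the Dirichlet-kernel estimates of Lemma~\ref{lem:Dirichlet}, and get the matching upper bound and the optimality of the indices from Corollary~\ref{cor:OptConditions} via \eqref{eq:Comlplq}. The only cosmetic difference is that the paper rotates the real system $\TB^{\,\RR}\diamond\TB^{\,\RR}$ (which makes the construction work over real scalars as well), whereas you use the complex system $\TB$, with the roles of $\alpha$ and $\beta$ interchanged.
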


\begin{proof}
Pick $0<\alpha<1$ with $r_\alpha=r$, and $0<\beta<1$ with $q_\beta=q$. By Theorem~\ref{thm:Main}, Lemma~\ref{lem:gathered}, and Lemma~\ref{lem:ccdom}, the rotated system $\XB:=\TB^{\,\RR} \diamond \TB^{\,\RR}$ is a $p$-Hilbertian $q$-Besselian Schauder basis of the Hilbert space $\HH:=\HH_{-\beta}\oplus \HH_\alpha$ with
\[
2 \aunc_{2m} [\XB,\HH] \ge d_m:=\adom_m[ \TB^{\,\RR},\HH_{-\beta},\TB^{\,\RR},\HH_\alpha], \quad m\in\NN.
\]
In turn, by Lemma~\ref{lem:Dirichlet},
\[
d_{2m+1}\ge \frac{\norm{D_m}_{\HH_{-\beta}}} {\norm{D_m}_{\HH_\alpha}}\approx m^{(1+\beta)/2-(1-\alpha)/2}= m^{1/q-1/r}, \quad m\in\NN.
\]
We infer that $\aunc_{m}[\XB]\gtrsim m^{1/q-1/r}$ for $m\in\NN$. In light of \eqref{eq:Comlplq}, an application of Corollary~\ref{cor:OptConditions} puts an end to the proof.
\end{proof}

We will derive Theorem~\ref{thm:CPlp} from the first of the two consequences of Theorem~\ref{thm:AA} that we record  below. The second one will be used in Section~\ref{sect:CPAG}.

\begin{corollary}\label{cor:CPlp}
Let $\XX$ be a Banach space with a Schauder basis $\BB$, and let $0\le \alpha_0<1$. Suppose that $\unc_m[\BB,\XX]\lesssim m^{\alpha_0}$ for $m\in\NN$, and that $\XX$ contains a complemented subspace isomorphic to $\ell_p$ for some $1<p<\infty$. Then, for each $\alpha\in[\alpha_0,1)$, there is a  Schauder basis  $\XB$ of $\XX$ with $\unc_m[\XB,\XX]\approx m^{\alpha}$ for $m\in\NN$.
\end{corollary}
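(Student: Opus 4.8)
The plan is to peel off the complemented $\ell_p$ and reduce everything to the construction of a single conditional basis of $\ell_p$. Writing $\XX=V\oplus W$ with $V\cong\ell_p$ and using $\ell_p\cong\ell_p\oplus\ell_p$, the Pe\l czy\'nski decomposition trick gives
\[
\XX=V\oplus W\cong(V\oplus V)\oplus W\cong V\oplus(V\oplus W)\cong\ell_p\oplus\XX.
\]
Thus it suffices to construct a Schauder basis $\mathcal{C}$ of $\ell_p$ with $\unc_m[\mathcal{C},\ell_p]\approx m^\alpha$. Indeed, transporting $\XB:=\mathcal{C}\oplus\BB$ through the isomorphism $\XX\cong\ell_p\oplus\XX$ produces a Schauder basis of $\XX$ whose conditionality parameters, by~\eqref{eq:UNCCDS} and the invariance of $\unc_m$ under isomorphisms up to a multiplicative constant, satisfy
\[
\unc_m[\XB,\XX]\approx\max\{\unc_m[\mathcal{C},\ell_p],\unc_m[\BB,\XX]\}\approx\max\{m^\alpha,m^{\alpha_0}\}=m^\alpha,
\]
the last equality because $\alpha_0\le\alpha$. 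This part is routine, and it reduces the corollary to its instance $\XX=\ell_p$.

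To build $\mathcal{C}$ I would mimic Theorem~\ref{thm:AA}, but with exponents tuned to $p$. By Proposition~\ref{lem:latticeEst} every semi-normalized $M$-bounded system in $\ell_p$ has Hilbertian index at most $p$ and Besselian index at least $p$, so I fix $1\le q\le p\le r\le\infty$ with $1/q-1/r=\alpha$; such a pair exists for every $p\in(1,\infty)$ and every $\alpha\in[0,1)$, since $1/q$ may be raised anywhere in $[1/p,1]$ and $1/r$ lowered anywhere in $[0,1/p]$, a total budget of $1>\alpha$. (This is precisely why, unlike in the almost greedy setting of Theorem~\ref{thm:CPAGlp}, no restriction on $p$ is needed here.) The target is a basis $\mathcal{C}$ of $\ell_p$ that is $\ell_q$-Hilbertian, $\ell_r$-Besselian, and satisfies $\unc_m[\mathcal{C},\ell_p]\gtrsim m^\alpha$. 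Granting this, Corollary~\ref{cor:OptConditions} together with $\dom_m[\ell_q,\ell_r]=m^{1/q-1/r}=m^\alpha$ from~\eqref{eq:Comlplq} upgrades the lower bound to $\unc_m[\mathcal{C},\ell_p]\approx m^\alpha$, which finishes the proof.

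The hard part is the explicit construction of such a $\mathcal{C}$, an $\ell_p$ counterpart of the rotated double trigonometric system $\TB^{\,\RR}\diamond\TB^{\,\RR}$. For the upper (Hilbertian and Besselian) estimates I would transfer the sharp weighted $L_2$ bounds of Theorem~\ref{thm:FE} to the $\ell_q$ and $\ell_r$ scales by the same real-interpolation scheme, while for the lower bound I would invoke Lemma~\ref{lem:ccdom} for a rotation, with a Dirichlet-kernel-type coefficient vector playing the role that the ratio $\norm{D_m}_{\HH_{-\beta}}/\norm{D_m}_{\HH_\alpha}\approx m^\alpha$ plays there. The genuine obstacle I expect to fight with is that a weighted $L_p$ function space is isomorphic to $L_p$, not to the sequence space $\ell_p$, so the Fourier-analytic model cannot be used verbatim. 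The crux is therefore to realize the required two-sided estimate on finite-dimensional $\ell_p$-blocks, with uniformly bounded basis constants, and then to assemble them over a decomposition $\ell_p\cong(\bigoplus_k\ell_p^{n_k})_{\ell_p}$ into a single basis $\mathcal{C}=\bigoplus_k\mathcal{C}^{(k)}$, for which the infinite-sum analogue of~\eqref{eq:UNCCDS} gives $\unc_m[\mathcal{C},\ell_p]\approx\sup_k\unc_m[\mathcal{C}^{(k)},\ell_p^{n_k}]\approx m^\alpha$, uniformly in $m$ and valid for every $1<p<\infty$.
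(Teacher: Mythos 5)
Your reduction to the case $\XX=\ell_p$ is correct and coincides with the paper's: both use $\XX\oplus\ell_p\simeq\XX$ (Pe{\l}czy\'{n}ski decomposition) together with \eqref{eq:UNCCDS} to absorb $\BB$, so everything hinges on producing a single Schauder basis $\mathcal{C}$ of $\ell_p$ with $\unc_m[\mathcal{C},\ell_p]\approx m^{\alpha}$. But that production is precisely what your proposal does not contain. You describe a strategy (real interpolation for the upper bounds, Lemma~\ref{lem:ccdom} with Dirichlet-kernel coefficient vectors for the lower bound), then you yourself flag the obstruction --- the weighted spaces $\HH_\lambda$ are Hilbert spaces, isomorphic to $\ell_2$ and not to $\ell_p$ --- and leave ``the crux'', namely realizing uniform two-sided estimates on finite-dimensional $\ell_p$-blocks, unproved. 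No conditional basis of $\ell_p^n$ (or of $\ell_p$) with $\unc_m\approx m^{\alpha}$ is ever constructed, so the argument has a genuine gap exactly at its core.

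The idea you are missing is that no conditional structure on $\ell_p$-blocks is needed at all. The paper takes the basis $\XB_{q,r}$ of $\ell_2$ furnished by Theorem~\ref{thm:AA} (with $1<q\le 2\le r<\infty$ and $1/q-1/r=\alpha$), whose two-sided estimate $\unc_m\approx m^{\alpha}$ is already established there, and considers its finite sections $\XB_{q,r}^{(2^j)}$, which span finite-dimensional \emph{Hilbert} spaces $\HH_{2^j}\equiv\ell_2^{2^j}$. These Euclidean blocks are then glued with an outer $\ell_p$-sum: by Pe{\l}czy\'{n}ski's theorem, $\left(\bigoplus_{j=1}^{\infty}\ell_2^{2^j}\right)_p\approx\ell_p$, so the assembled space is isomorphic to $\ell_p$ even though every block is Hilbertian; and by Lemma~\ref{lem:Infinite}(ii), since the block sizes $(2^j)_{j=1}^\infty$ are unbounded, the basis $\XB_{q,r}^{(\mm)}$ of this sum has exactly the same conditionality constants as $\XB_{q,r}$ has in $\ell_2$, namely $\approx m^{\alpha}$. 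This sidesteps the obstacle you ran into: the exponent $p$ enters only through the outer sum and never through the blocks, which is why the result holds for every $1<p<\infty$ and every $\alpha\in[0,1)$ with no interplay between $p$ and the pair $(q,r)$, and why the Fourier-analytic work done in $\HH_{\pm\lambda}$ suffices verbatim.
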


\begin{corollary}\label{cor:CPlpBis}
Let $\XX$ be a Banach space with a Schauder basis $\BB$, and let $1<p<\infty$. Let $1<q_0\le \min\{2, p\}$ and  $\max\{2,p\} \le r_0<\infty$. Suppose that
\begin{itemize}[leftmargin=*]
\item $\BB$ is $q_0$-Hilbertian and $r_0$-Besselian,
\item $\unc_m[\BB,\XX]\lesssim m^{1/q_0-1/r_0}$ for $m\in\NN$, and
\item $\XX$ contains a complemented subspace isomorphic to $\ell_p$.
\end{itemize}
Then, for each $1<q\le q_0$ and each  $r_0\le r <\infty$, there is a Schauder basis  $\XB$ of $\XX$ such that
\begin{enumerate}[label=(\roman*), leftmargin=*,widest=iii]
\item $\unc_m[\XB,\XX]\approx \aunc_m[\XB,\XX] \approx m^{1/q-1/r}$ for $m\in\NN$,
\item $\XB$ is $q$-Hilbertian $r$-Besselian, and
\item  $\XB$ is not $\ell_{q_1}$-Hilbertian for any $q_1>q$ nor $\ell_{r_1}$-Besselian for any $r_1<r$.
\end{enumerate}
\end{corollary}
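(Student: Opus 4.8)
The plan is to realize $\XB$ as the direct sum of the given basis $\BB$ with a single ``bad'' basis of $\ell_p$ that carries all the required pathology, and then to read off every property of $\XB$ from the direct-sum formulas and from Corollary~\ref{cor:OptConditions}. Since $\XX$ has a complemented subspace isomorphic to $\ell_p$ and $\ell_p\cong\ell_p\oplus\ell_p$, we have $\XX\cong\XX\oplus\ell_p$, so it is enough to build a basis of $\XX\oplus\ell_p$. I would take $\XB=\BB\oplus\YB$, where $\YB$ is a Schauder basis of $\ell_p$, to be constructed, that is $\ell_q$-Hilbertian and $\ell_r$-Besselian, satisfies $\unc_m[\YB,\ell_p]\approx\aunc_m[\YB,\ell_p]\approx m^{1/q-1/r}$, and is neither $\ell_{q_1}$-Hilbertian for $q_1>q$ nor $\ell_{r_1}$-Besselian for $r_1<r$.

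Granting such a $\YB$, the three conclusions are routine. For (ii), note that $q\le q_0$ and $r\ge r_0$ give $\ell_q\subseteq\ell_{q_0}$ and $\ell_{r_0}\subseteq\ell_r$, so $\BB$ is $\ell_q$-Hilbertian and $\ell_r$-Besselian; as $\ell_q$ and $\ell_r$ are lattice isomorphic to their squares, Lemma~\ref{lem:gathered}(ii) makes $\XB$ $\ell_q$-Hilbertian and $\ell_r$-Besselian. For (i), the hypothesis $\unc_m[\BB,\XX]\lesssim m^{1/q_0-1/r_0}$ and the inequality $1/q_0-1/r_0\le 1/q-1/r$ yield $\unc_m[\BB,\XX]\lesssim m^{1/q-1/r}$, and likewise $\aunc_m[\BB,\XX]\le\unc_m[\BB,\XX]\lesssim m^{1/q-1/r}$; feeding these into \eqref{eq:UNCCDS} and \eqref{eq:AUNCCDS} squeezes $\unc_m[\XB]$ and $\aunc_m[\XB]$ to $\approx m^{1/q-1/r}$, the transport along $\XX\oplus\ell_p\cong\XX$ affecting only constants. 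For (iii), the copy of $\ell_p$ supporting $\YB$ is a complemented summand of $\XX\oplus\ell_p$ on which $\XB$ restricts to $\YB$; composing the series transform of $\XB$ with the coordinate projection and the coefficient transform with the inclusion shows that $\ell_{q_1}$-Hilbertianity or $\ell_{r_1}$-Besselianity of $\XB$ would descend to $\YB$, contradicting its optimality.

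The heart of the matter, and what I expect to be the main obstacle, is the construction of $\YB$. I would follow the rotation paradigm of Theorem~\ref{thm:AA}: seek semi-normalized Schauder bases $\OB=(\xx_n)_{n=1}^\infty$ and $\RB=(\yy_n)_{n=1}^\infty$ of two spaces whose direct sum is isomorphic to $\ell_p$, with $\OB$ being $\ell_q$-Hilbertian and $\norm{\sum_{n=1}^m\xx_n}\approx m^{1/q}$, and $\RB$ being $\ell_r$-Besselian and $\norm{\sum_{n=1}^m\yy_n}\approx m^{1/r}$, and set $\YB=\OB\diamond\RB$. Then $\adom_m[\OB,\cdot,\RB,\cdot]\ge\norm{\sum_{n=1}^m\xx_n}/\norm{\sum_{n=1}^m\yy_n}\approx m^{1/q-1/r}$, so Lemma~\ref{lem:ccdom} gives $\aunc_{2m}[\YB]\gtrsim m^{1/q-1/r}$, while Lemma~\ref{lem:gathered}(ii) keeps $\YB$ $\ell_q$-Hilbertian and $\ell_r$-Besselian. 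Corollary~\ref{cor:OptConditions}, applied with $\UU_1=\ell_q$, $\UU_2=\ell_r$ and $\dom_m[\ell_q,\ell_r]=m^{1/q-1/r}$ from \eqref{eq:Comlplq}, then upgrades this to $\unc_m[\YB]\approx\aunc_m[\YB]\approx m^{1/q-1/r}$ and, since $\dom_m[\ell_{q_1},\ell_r]/\dom_m[\ell_q,\ell_r]=m^{1/q_1-1/q}\to 0$ for $q_1>q$ (and symmetrically for $r_1<r$), yields the two non-embeddability statements for $\YB$.

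The delicate ingredient is thus a pair $(\OB,\RB)$ realized inside copies of $\ell_p$: a basis whose Dirichlet-type vectors $\sum_{n\le m}\xx_n$ are abnormally large, of order $m^{1/q}$ with $q\le p$, and a companion whose analogous vectors cancel down to order $m^{1/r}$ with $r\ge p$, both remaining semi-normalized bases of $\ell_p$ with the sharp Hilbertian and Besselian indices. For $p=2$ these are furnished verbatim by the power-weighted Hilbert spaces $\HH_{-\beta}$ and $\HH_{\alpha}$ of Theorem~\ref{thm:AA}, the two norms of the Dirichlet kernel providing the ratio $m^{1/q-1/r}$. For general $p$ the power-weighted $L_p$-spaces fail to be isomorphic to $\ell_p$, so an intrinsic $\ell_p$ realization of this norm discrepancy must be produced; establishing it, together with the matching $\ell_q$-Hilbertian and $\ell_r$-Besselian bounds, is the step I expect to carry the bulk of the difficulty.
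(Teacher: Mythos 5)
Your overall architecture is sound and in fact coincides with the paper's: the paper also realizes the basis as $\BB\oplus(\text{a basis spanning a copy of }\ell_p)$, identifies $\XX\oplus\ell_p\simeq\XX$ using the complemented copy of $\ell_p$, and settles the three conclusions with \eqref{eq:UNCCDS}, \eqref{eq:AUNCCDS}, \eqref{eq:Comlplq} and Corollary~\ref{cor:OptConditions}. The problem is that your proposal stops exactly at the point that constitutes the actual content of the corollary: you never construct the basis $\YB$ of $\ell_p$ that is $q$-Hilbertian, $r$-Besselian, has $\unc_m\approx\aunc_m\approx m^{1/q-1/r}$, and is optimal in both indices; you explicitly defer it as ``the step I expect to carry the bulk of the difficulty.'' That is a genuine gap, not a routine verification. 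Moreover, the route you sketch for filling it --- finding two infinite-dimensional bases $\OB$ and $\RB$, supported on spaces whose direct sum is isomorphic to $\ell_p$, exhibiting the Dirichlet-kernel discrepancy $m^{1/q}$ versus $m^{1/r}$, and then rotating --- is left entirely open, and as you yourself observe, the weighted spaces $\HH_{-\beta}$ and $\HH_\alpha$ give no direct access to it when $p\neq 2$.

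The paper's way around this obstacle is not an ``intrinsic $\ell_p$ realization'' of the weighted-norm discrepancy, but a blocking argument that transports the $\ell_2$-construction into $\ell_p$ wholesale. One takes the basis $\XB_{q,r}$ of $\ell_2$ furnished by Theorem~\ref{thm:AA} (the rotation is performed once and for all at the Hilbert-space level; note $q\le 2\le r$ by your standing hypotheses), truncates it to its first $2^j$ vectors, whose span $\HH_{2^j}$ is isometric to $\ell_2^{2^j}$, and forms the natural basis $\XB_{q,r}^{(\mm)}$, $\mm=(2^j)_{j=1}^\infty$, of the $\ell_p$-sum $\bigl(\bigoplus_{j}\HH_{2^j}\bigr)_p$. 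Lemma~\ref{lem:Infinite} is precisely the device that makes every relevant property survive this blocking: part (ii) gives $\unc_m[\XB_{q,r}^{(\mm)}]=\unc_m[\XB_{q,r},\ell_2]$; part (iii) transfers the $q$-Hilbertian and $r$-Besselian properties, using that the unit vector system of $\ell_p$ is itself $q$-Hilbertian and $r$-Besselian because $q\le p\le r$ (this is exactly where the hypotheses $q_0\le\min\{2,p\}$ and $r_0\ge\max\{2,p\}$ enter); and part (iv), applicable since $\sum_{i<j}2^i\lesssim 2^j$, preserves the lower bound $\aunc_m\gtrsim m^{1/q-1/r}$. Finally, Pe{\l}czy\'nski's theorem \cite{Pel1960} gives $\bigl(\bigoplus_j\ell_2^{2^j}\bigr)_p\approx\ell_p$, so the blocked basis does live, up to isomorphism, on $\ell_p$, and $\BB\oplus\XB_{q,r}^{(\mm)}$ is the desired basis of $\XX$ after invoking Corollary~\ref{cor:OptConditions} once more for the optimality claims. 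Without this truncate-and-resum step, or some substitute for it, your argument is incomplete precisely where the theorem is hardest.
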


Before proceeding with the proof these results, it will be convenient to introduce some notation. Given a sequence $\XB=(\xx_n)_{n=1}^\infty$ in a Banach space $\XX$ we set
\[
\XB^{(m)}=(\xx_n)_{n=1}^m, \quad m\in\NN.
\]
If $(\XX_j)_{j=1}^\infty$ are Banach spaces, we denote by $L_j$ the canonical embedding of $\XX_j$ into $\prod_{j=1}^\infty\XX_j$, $j\in\NN$. For each $j\in\NN$, let $\XB_j=(\xx_{j,n})_{n=1}^{m_j}$ be a finite family in the Banach space $\XX_j$. We denote by $\bigoplus_{j=1}^\infty \XB_j=(\yy_k)_{k=1}^\infty$ the natural arrangement of the family
\[
\overline{\xx}_{j,n}:=L_j(\xx_{j,n}), \quad j\in\NN,\;  1\le n \le m_j,
\]
that is, $\yy_k=\overline{\xx}_{j,n}$ if $k=\sum_{i=1}^{j-1}m_i +n$. Given a Schauder basis $\XB$ and a sequence $\mm=(m_j)_{j=1}^\infty$ in $\NN$ we set
\[
\XB^{(\mm)}= \bigoplus_{j=1}^\infty \XB^{(m_j)}.
\]
\begin{lemma}\label{lem:Infinite}
Let $\XB=(\xx_j)_{j=1}^\infty$ be a Schauder basis of a Banach space $\XX$, $\mm=(m_j)_{j=1}^\infty$ be a sequence in $\NN$, and $\UU$ be a sequence space. Set
\[
\YY:=\left(\bigoplus_{j=1}^\infty [\xx_n \colon 1\le n \le m_j]\right)_\UU.
\]
\begin{enumerate}[label=(\roman*), leftmargin=*,widest=iii]
\item\label{InfiniteSchauder}  $\XB^{(\mm)}$ is a Schauder basis of $\YY$.
\item\label{InfiniteUNC} If $\mm$ is unbounded, then $\unc_m[\XB^{(\mm)},\YY]=\unc_m[\XB,\XX]$.
\item\label{InfiniteHB} Let $1\le q\le\infty$. Suppose that both $\XB$ and the unit vector system of $\UU$ are $q$-Hilbertian (resp., $q$-Besselian). Then, $\XB^{(\mm)}$ is $q$-Hilbertian (resp., $q$-Besselian)  regarded as a basis of $\YY$.
\item\label{InfiniteAUNC} Suppose that $\sum_{i=1}^{j-1} m_i \lesssim m_j$ for $j\in\NN$. Let $\delta\colon(0,\infty)\to(0,\infty)$ be a doubling function such that $\aunc_m[\XB,\XX] \gtrsim \delta(m)$ for $m\in\NN$. Then,  $\aunc_m[\XB^{(\mm)},\YY] \gtrsim \delta(m)$ for $m\in\NN$.
\end{enumerate}
\end{lemma}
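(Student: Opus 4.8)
The plan is to treat the four assertions in turn, the first three being routine consequences of the block structure of $\YY$ together with the monotonicity of the $\UU$-norm, and the fourth requiring a genuinely combinatorial argument. Throughout I would write $M_0=0$ and $M_j=\sum_{i=1}^j m_i$, so that the $k$-th vector $\yy_k$ of $\XB^{(\mm)}$ equals $L_j(\xx_n)$ when $k=M_{j-1}+n$ with $1\le n\le m_j$, and the associated coordinate functional acts as $\yy_k^*((f_i)_i)=\xx_n^*(f_j)$. Consequently, for any $A\subseteq\NN$ the coordinate projection splits blockwise as $S_A((f_i)_i)=(S_{A_j}(f_j))_j$, where $A_j=\{n\colon M_{j-1}+n\in A,\ 1\le n\le m_j\}$; this identity, and the fact that $1$-unconditionality of $\EB$ makes the $\UU$-norm monotone, are the workhorses behind \ref{InfiniteSchauder}--\ref{InfiniteHB}.

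For \ref{InfiniteSchauder} I would first check density: each coordinate $f_j$ lies in $[\xx_n\colon 1\le n\le m_j]$, and the tails $(0,\dots,0,f_{j+1},f_{j+2},\dots)$ tend to zero in $\YY$ because $(\norm{f_i})_i\in\UU$ and $\EB$ is a basis of $\UU$. It then remains to bound the partial-sum projections uniformly: a partial sum of $\XB^{(\mm)}$ is the identity on the first $j-1$ blocks, a partial-sum projection $S_{\NN[n]}$ of $\XB$ on the $j$-th block, and zero afterwards, so $\norm{S_{\NN[n]}(f_j)}\le K\norm{f_j}$ (with $K$ the basis constant of $\XB$) combined with monotonicity of the $\UU$-norm yields the uniform bound $K$. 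For \ref{InfiniteUNC}, the inequality $\unc_m[\XB^{(\mm)},\YY]\le\unc_m[\XB,\XX]$ follows from the blockwise identity for $S_A$ together with $\abs{A_j}\le\abs{A}\le m$ and monotonicity; for the reverse I would use that $\mm$ is unbounded, transplanting a near-extremal finitely supported pair $(f,A)$ for $\unc_m[\XB,\XX]$ into a block $j$ with $m_j\ge\max(\supp f\cup A)$ via the isometric embedding $L_j$ (an isometry because $\EB$ is normalized), which carries $S_A$ to the corresponding coordinate projection and hence preserves the ratio. Assertion \ref{InfiniteHB} is proved by the same factorization: the coefficient (resp.\ series) transform of $\XB^{(\mm)}$ is the direct sum of those of the blocks, and the hypotheses that $\XB$ and $\EB$ are $q$-Besselian (resp.\ $q$-Hilbertian) translate, after using the embedding $\UU\hookrightarrow\ell_q$ (resp.\ $\ell_q\hookrightarrow\UU$) they encode together with monotonicity, into boundedness from $\YY$ into $\ell_q$ (resp.\ from $\ell_q$ into $\YY$).

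The crux is \ref{InfiniteAUNC}. The difficulty is that the first $m$ vectors of $\XB^{(\mm)}$ straddle several blocks, so one cannot simply transplant an extremal $\XX$-configuration as in \ref{InfiniteUNC}. The hypothesis $M_{j-1}\lesssim m_j$ is exactly what rescues the argument: it forces the partial sums $M_j$ to grow geometrically, whence $m_{j-1}\gtrsim M_{j-1}$ (indeed $M_{j-2}\le c\,m_{j-1}$ gives $M_{j-1}\le(c+1)m_{j-1}$). Given $m$, let $j$ satisfy $M_{j-1}<m\le M_j$. I would then select a single block $i$ and an integer $m''$ with $1\le m''\le m_i$, $M_{i-1}+m''\le m$ and $m''\gtrsim m$: if $m-M_{j-1}\ge m/2$ take $i=j$ and $m''=m-M_{j-1}$ (note $m''\le m_j$ since $m\le M_j$), while if $M_{j-1}\ge m/2$ (which forces $j\ge2$) take $i=j-1$ and $m''=m_{j-1}$, using $m_{j-1}\gtrsim M_{j-1}\ge m/2$. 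Picking a near-extremal pair $(f,A)$ for $\aunc_{m''}[\XB,\XX]$ with $\supp f\subseteq\NN[m'']$ and $A\subseteq\NN[m'']$, and transplanting it into block $i$ via the isometry $L_i$, the support lands inside $\NN[m]$, so this configuration is admissible for $\aunc_m[\XB^{(\mm)},\YY]$ and gives $\aunc_m[\XB^{(\mm)},\YY]\gtrsim\aunc_{m''}[\XB,\XX]\gtrsim\delta(m'')\gtrsim\delta(m)$, the last step by the doubling of $\delta$ together with $m''\gtrsim m$.

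The main obstacle is precisely this block-selection step: verifying that the geometric growth encoded by $M_{j-1}\lesssim m_j$ always furnishes a block that is simultaneously large enough ($m''\gtrsim m$) and positioned so that the translated support $\{M_{i-1}+1,\dots,M_{i-1}+m''\}$ still fits within the first $m$ coordinates. Everything else reduces to the blockwise decomposition of $S_A$ and the monotonicity and normalization of the $\UU$-norm, which I expect to be straightforward.
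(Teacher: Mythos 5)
Your proof is correct and takes essentially the same route the paper intends: the paper declares (i)--(ii) straightforward, derives (iii) from the natural isometry between $\left(\bigoplus_{j=1}^\infty \ell_q^{m_j}\right)_q$ and $\ell_q$ (your blockwise factorization of the coefficient and series transforms is exactly this), and for (iv) it refers to the block-selection argument of \cite{AADK2018}*{Lemma 2.3}, which is the case analysis you carried out. In particular, your key observation --- that the lacunarity hypothesis $\sum_{i=1}^{j-1} m_i \lesssim m_j$ furnishes, for every $m$, a single block of size $m''\gtrsim m$ whose translated support $\{M_{i-1}+1,\dots,M_{i-1}+m''\}$ lies inside $\NN[m]$, so that a near-extremal configuration for $\aunc_{m''}[\XB,\XX]$ can be transplanted isometrically --- is precisely the content of that cited proof.
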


\begin{proof}
Parts~\ref{InfiniteSchauder}  and \ref{InfiniteUNC} are straightforward, and \ref{InfiniteHB} follows from the natural isometry between $(\bigoplus_{j=1}^\infty \ell_q^{m_j})_q$ and $\ell_q$.   To prove \ref{InfiniteAUNC}, proceed as in the proof of \cite{AADK2018}*{Lemma 2.3}.
\end{proof}

\begin{proof}[Proof of Corollaries~\ref{cor:CPlp} and \ref{cor:CPlpBis}]
To prove  Corollary~\ref{cor:CPlp}, we choose $1<q\le 2\le r<\infty$ such that $\alpha=1/q-1/r$. Then, both to prove Corollary~\ref{cor:CPlp} and  Corollary~\ref{cor:CPlpBis}, we consider the Schauder basis $\XB_{q,r}$  of $\ell_2$ provided by Theorem~\ref{thm:AA}. For each $m\in\NN$, let $\HH_m$ be the subspace of $\ell_2$ spanned by $\XB_{q,r}^{(m)}$. Set $\mm=(2^j)_{j=1}^\infty$. By \eqref{eq:UNCCDS}, and \eqref{eq:Comlplq}, Lemma~\ref{lem:Infinite} and Corollary~\ref{cor:OptConditions}, the sequence $\BB\oplus \XB^{(\mm)}$ is a Schauder basis of $\YY:=\XX\oplus\left( \bigoplus_{n=1}^\infty \HH_{2^n} \right)_p$ satisfying the desired properties. We have
\[
\left( \bigoplus_{n=1}^\infty \HH_{2^n} \right)_p\equiv \left( \bigoplus_{n=1}^\infty \ell_2^{2^n} \right)_p\approx \ell_p
\]
(see \cite{Pel1960}*{Proof of Theorem 7}). In turn, since  $\ell_p$ is isomorphic to its square, $\XX\oplus\ell_p\simeq\XX$. We infer that $\YY\simeq\XX$.
\end{proof}

\begin{proof}[Proof of Theorem~\ref{thm:CPlp}]
Apply Corollary~\ref{cor:CPlp} in the case where $\XX$ is $\ell_p$ and $\BB$ is its unit vector system, so that $\alpha_0=0$.
\end{proof}

\section{Conditionality parameters of almost greedy bases}\label{sect:CPAG}\noindent
Let us draw reader's attention to the existence of  an almost greedy counterpart of Theorem~\ref{thm:EJ}. Namely, the authors of \cite{ABW2021} proved the following result.
\begin{theorem}[see \cite{ABW2021}*{Theorems 1.1 and 3.16}]\label{thm:ABW}
Let $\XB$ be an almost greedy basis of a superreflexive Banach space, and let $\ww$ be the weight whose primitive sequence is the fundamental function $(\udf_m)_{m=1}^\infty$ of $\XB$. Then,  there are $1<r\le s<\infty$ such that $\XB$ is $d_{1,r}(\ww)$-Hilbertian and $d_{1,s}(\ww)$-Besselian. Moreover, for every $1<t<\infty$, $d_{1,t}(\ww)$ is a superreflexive Banach space.
\end{theorem}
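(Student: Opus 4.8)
I would first settle the \emph{``Moreover''} assertion, since it records the regularity of $\ww$ that underlies everything else. As an almost greedy basis, $\XB$ is semi-normalized, so Theorem~\ref{thm:EJ} applies and furnishes $1<q\le r<\infty$ for which $\XB$ is $q$-Hilbertian and $r$-Besselian. Testing the series transform and the coefficient transform on the signed indicators $\Ind_{\varepsilon,A}$ with $\abs{A}\le m$ gives, straight from the definition of the fundamental function, the two-sided bound
\[
m^{1/r}\lesssim \udf_m\lesssim m^{1/q}, \quad m\in\NN.
\]
Hence the primitive sequence $s_m=\udf_m$ is squeezed between two powers with exponents in $(0,1)$; in particular it enjoys both the LRP and the dual upper regularity property. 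I would then invoke the weighted-Lorentz machinery to turn these two regularity properties into an upper $p_1$-estimate and a lower $p_0$-estimate for the lattice $d_{1,t}(\ww)$, for suitable $1<p_1\le p_0<\infty$; a Banach lattice with nontrivial upper and lower estimates has nontrivial type and cotype and is superreflexive (see \cite{AABW2021} and \cite{LinTza1979}).

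For the main assertion the plan is to exploit the two halves of almost-greediness separately through a dyadic layering of a vector $f=\sum_n a_n\xx_n$. Writing $\Delta_j f=\sum_{n\in B_j}a_n\xx_n$ for the block supported on the band $B_j=\{n\colon 2^{-j-1}<\abs{a_n}\le 2^{-j}\}$, democracy will yield $\norm{\Delta_j f}\approx 2^{-j}\udf_{\abs{B_j}}$, while quasi-greediness makes the layers $(\Delta_j f)_j$ unconditional up to a uniform constant, through the boundedness of the truncation operator for quasi-greedy bases (see \cite{AlbiacKalton2016}). Consequently
\[
\norm{f}\approx \Av_{\varepsilon}\norm{\sum_j \varepsilon_j\, \Delta_j f},
\]
the average being taken over signs $\varepsilon_j$, and the right-hand side is now amenable to the linear theory of $\XX$.

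Superreflexivity will enter exactly here. Since $\XX$ is superreflexive it contains neither $\ell_1^n$ nor $\ell_\infty^n$ uniformly, hence it has nontrivial type $a\in(1,2]$ and nontrivial cotype $b\in[2,\infty)$ (see \cite{AlbiacKalton2016}). The type inequality bounds the sign-average above by the $\ell_a$-sum of the layer norms, which, by the estimate for $\norm{\Delta_j f}$ and the regularity of $(s_m)$, matches through Lemma~\ref{lem:equivalentnormbis} the quasi-norm of $(a_n)$ in $d_{1,a}(\ww)$; this gives $\norm{f}\lesssim\norm{(a_n)}_{d_{1,a}(\ww)}$, so $\XB$ is $d_{1,a}(\ww)$-Hilbertian. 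Dually, the cotype inequality bounds the $\ell_b$-sum of the layer norms below by the sign-average, yielding $\norm{(a_n)}_{d_{1,b}(\ww)}\lesssim\norm{f}$, so $\XB$ is $d_{1,b}(\ww)$-Besselian. Setting $r=a\le 2\le b=s$ delivers the required $1<r\le s<\infty$.

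The main obstacle is the reduction carried out in the second paragraph: showing that quasi-greediness forces the dyadic layers to behave unconditionally, so that the random-sign estimates supplied by type and cotype may be transferred to the fixed vector $f$, and simultaneously matching the $\ell_a$- and $\ell_b$-combinations of the layer norms $2^{-j}\udf_{\abs{B_j}}$ to the weighted-Lorentz quasi-norms via Lemma~\ref{lem:equivalentnormbis}, all with constants uniform in $f$. By contrast, almost-greediness by itself yields only the crude endpoints $d_{1,1}(\ww)$-Hilbertian and $d_{1,\infty}(\ww)$-Besselian; the passage to \emph{finite} secondary indices strictly between $1$ and $\infty$ is precisely what the nontrivial type and cotype of a superreflexive space supply.
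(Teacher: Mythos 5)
The first thing to note is that the paper does not prove Theorem~\ref{thm:ABW} at all: it is imported verbatim from \cite{ABW2021}*{Theorems 1.1 and 3.16}. So your proposal must stand on its own, and it has a fatal gap. The decisive flaw is the claim in your second paragraph that quasi-greediness makes the dyadic layers $(\Delta_j f)_j$ sign-unconditional up to a uniform constant, so that $\norm{f}\approx \Av_{\varepsilon}\norm{\sum_j \varepsilon_j\,\Delta_j f}$. Quasi-greediness gives uniform boundedness only of the \emph{prefix} sums $\sum_{j\le J}\Delta_j f$ (these are greedy projections); it says nothing about arbitrary signed combinations of magnitude bands, and boundedness of the truncation operator does not repair this. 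In fact your claim is refutable from results in this very paper: if it held, then in $\XX=\ell_2$, which has type $2$ and cotype $2$, your third paragraph would yield $r=a=2=b=s$, i.e., every almost greedy basis of a Hilbert space would be $d_{1,2}(\ww)$-Hilbertian and $d_{1,2}(\ww)$-Besselian; Lemma~\ref{lem:BHCC} together with \eqref{eq:domWLorentz} would then force $\unc_m\lesssim H_m^{1/2-1/2}\approx 1$, making every almost greedy basis of $\ell_2$ unconditional --- contradicting \cite{GW2014} and Theorem~\ref{thm:CPAGlp} with $p=2$ and $\alpha>0$. The moral is that $r$ and $s$ cannot be the type and cotype exponents of the ambient space: in \cite{ABW2021} they arise from power-type moduli of uniformly convex and uniformly smooth renormings interacting with the quasi-greedy constants of the basis, they depend on the basis and not only on $\XX$, and for a conditional almost greedy basis one necessarily has $r<s$. (The matching of $\ell_a$-sums of the layer norms $2^{-j}\udf_{\abs{B_j}}$ with the $d_{1,a}(\ww)$ quasi-norm via Lemma~\ref{lem:equivalentnormbis} is also not termwise exact, but that is a secondary issue.)

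The ``Moreover'' part has a gap as well. From $m^{1/r}\lesssim\udf_m\lesssim m^{1/q}$ you cannot conclude the LRP or the URP: a nondecreasing sequence squeezed between two distinct powers can be constant on arbitrarily long stretches --- a flat value $v$ on $[m_0,m_1]$ is compatible with the sandwich as soon as $m_0\gtrsim v^{q}$ and $m_1\lesssim v^{r}$, and $v^{r-q}\to\infty$ --- so no fixed integer $c$ satisfies $s_{cm}\ge 2s_m$ for all $m$. What is needed, and what \cite{ABW2021} actually establishes, is a \emph{relative} estimate of the form $\udf_{km}\gtrsim k^{1/s}\udf_m$ uniformly in $m$ (together with its upper analogue), which again uses the renormed geometry and not merely the endpoint power bounds. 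A smaller wrinkle: Theorem~\ref{thm:EJ} is stated for Schauder bases, whereas an almost greedy basis is in general only a Markushevich basis, so even your power sandwich requires the quasi-greedy-specific arguments of \cites{AAW2019,ABW2021} rather than a direct appeal to Theorem~\ref{thm:EJ}. Your endpoint observation --- that democracy plus quasi-greediness alone give the $d_{1,1}(\ww)$-Hilbertian and $d_{1,\infty}(\ww)$-Besselian embeddings --- is correct, but the passage to finite secondary indices is precisely the hard content of \cite{ABW2021}, and it is not supplied by type and cotype.
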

In light of Theorem~\ref{thm:ABW}, Lemma~\ref{lem:BHCC}, and Equation~\eqref{eq:domWLorentz}, looking for embeddings involving Lorentz sequence spaces is a reasonable way of obtaining upper estimates for the conditionality parameters of almost greedy bases. Here, we focus on  almost greedy bases arising from the so called \emph{DKK-method} invented in \cite{DKK2003}.

Let $(\XX,\norm{\cdot}_\XX)$  be a Banach space with a semi-normalized Schauder basis $\XB=(\xx_n)_{n=1}^\infty$, and let $(\Sym,\norm{\cdot}_\Sym)$ be a symmetric or, more generally, subsymmetric sequence space. Let $(\Lambda_m)_{m=1}^\infty$ be the fundamental function of the unit vector system of $\Sym$, that is,
\[
\Lambda_m=\norm{\sum_{j=1}^m \ee_j}_\Sym, \quad m\in\NN.
\]
Let $\sigma=(\sigma_n)_{n=1}^\infty$ be an \emph{ordered partition} of $\NN$, i.e., a partition into integer intervals with
\[
\max(\sigma_n)<\min(\sigma_{n+1}),\quad n\in\NN.
\]
The \emph{averaging projection} $P_\sigma\colon\FF^\NN\to\FF^\NN$ associated with the ordered partition $\sigma$ can be expressed as
\[
P_\sigma(f)=\sum_{n=1}^\infty \langle \vv_n^*,f\rangle \, \vv_n,
\]
where
\begin{equation}\label{eq:18}
\quad \vv_n= \frac{1}{\Lambda_{\abs{\sigma_n}}}  \sum_{j\in\sigma_n} \ee_j,\quad
\vv_n^*=  \frac{\Lambda_{\abs{\sigma_n}}}{\abs{\sigma_n}}   \sum_{j\in\sigma_n} \ee_j,
\quad n\in\NN,
\end{equation}
and $\langle\cdot,\cdot\rangle$ is the natural dual pairing defined by
\[
\langle f, g\rangle = \sum_{n=1}^\infty a_n b_n, \quad f=(a_n)_{n=1}^\infty\in\FF^\NN,\; g=(b_n)_{n=1}^\infty\in\FF^\NN,\; fg\in\ell_1.
\]

Let $Q_\sigma=\Id_{\FF^\NN}-P_\sigma$ be the complementary projection. We define  $\norm{\cdot }_{\XB,\Sym,\sigma}$ on $c_{00}$ by
\[
\norm{f}_{\XB,\sigma,\Sym}=\norm{Q_\sigma(f)}_\Sym+\norm{\sum_{n=1}^\infty \vv_n^*(f)\, \xx_n}_\XX, \quad f\in c_{00}.
\]
The completion of the normed space $(c_{00},\norm{\cdot }_{\XB,\Sym,\sigma})$ will be denoted by $\YY[\XB,\Sym,\sigma]$. This method for building Banach spaces was invented in \cite{DKK2003}. The authors of \cite{AADK2018} delved into its study and named it  the DKK-method. For the purposes of this paper, it will be necessary to achieve some new properties of the unit vector system of $\YY[\XB,\Sym,\sigma]$.  To properly enunciate them, we introduce some terminology.  Let us regard the functionals on $\Sym$ as sequences acting on $\Sym$ through the natural dual pairing. With this convention, we have $c_{00} \subseteq \Sym^*\subseteq \ell_{\infty}$,  and the closed subspace of $\Sym^*$ spanned by $c_{00}$ is a subsymmetric sequence space that we denote by $\Sym_0^*$. Let $(\Gamma_m)_{m=1}^\infty$ be the fundamental function of the unit vector system of $\Sym_0^*$, that is,
\[
\Gamma_m=\norm{\sum_{j=1}^m \ee_j}_{\Sym^*}, \quad m\in\NN.
\]
By \cite{LinTza1977}*{Proposition 3.a.6},
\begin{equation}\label{eq:FFSubSym}
\frac{m}{\Lambda_n} = c_m \Gamma_m,\quad 1\le c_m\le 2,\;  \quad m\in\NN.
\end{equation}

Recall that the dual basis $\YB^*$ of a Schuader basis $\YB$ of a Banach space $\YY$ is a Schauder basis of the Banach space it spans in $\YY^*$, and  $\YB^{**}$ is equivalent to $\YB$ (see \cite{AlbiacKalton2016}*{Corollary 3.2.4}). In particular, we have $(\Sym_0^*)^*_0=\Sym$ up to an equivalent norm.

Note also that the operator $P_\sigma$ is self-adjoint, i.e.,
\begin{equation*}
\langle f,P_\sigma(g) \rangle =\langle P_\sigma(f), g \rangle, \quad f,g\in\FF^\NN,\; fg\in\ell_1.
\end{equation*}
Consequently, $Q_\sigma$ also is selft-adjoint.

\begin{proposition}\label{eq:DualDKK}
Let $\XB$ be a Schauder basis of a Banach space $\XX$, $\Sym$ be a subsymmetric sequence space, and $\sigma=(\sigma_n)_{n=1}^\infty$ be an ordered partition of $\NN$.  Then, the unit vector system $\EB$ is a Schauder basis of $\YY[\XB,\Sym,\sigma]$ whose dual basis is equivalent to the unit vector system of $\YY[\BB,\Sym_0^*,\sigma]$, where $\BB$ is a suitable perturbation of $\XB^*=(\xx_n^*)_{n=1}^\infty$, that is, $\BB=(c_n\, \xx_n^*)_{n=1}^\infty$ for some semi-normalized sequence $(c_n)_{n=1}^\infty$ in $(0,\infty)$.
\end{proposition}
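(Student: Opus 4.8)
The plan is to identify the dual basis of the unit vector system $\EB$ in $\YY[\XB,\Sym,\sigma]$ explicitly, and then to recognize the resulting functionals as (a perturbation of) the unit vector system of the DKK-space built from the dual data. The starting point is the very definition of the norm,
\[
\norm{f}_{\XB,\sigma,\Sym}=\norm{Q_\sigma(f)}_\Sym+\norm{\sum_{n=1}^\infty \vv_n^*(f)\, \xx_n}_\XX,
\]
which splits $\FF^\NN$ into the range of $Q_\sigma$ (an $\Sym$-part) and the range of $P_\sigma$ (an $\XX$-part carried by the block functionals $\vv_n^*$). First I would use the self-adjointness of $P_\sigma$ and $Q_\sigma$, recorded just above the statement, together with the duality $(\Sym_0^*)_0^*=\Sym$ and the equivalence $\YB^{**}\sim\YB$ for the dual basis, to reduce the claim to a single computation: the value of the coordinate functional $\ee_k^*$ (dual to $\EB$ in $\YY[\XB,\Sym,\sigma]$) acting on an arbitrary $f\in c_{00}$, expressed through the natural dual pairing $\langle\cdot,\cdot\rangle$.

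The key algebraic step is to decompose a generic coordinate functional along the two summands of the norm. Writing $\Id=P_\sigma+Q_\sigma$ and using that $P_\sigma$ is the averaging projection attached to $\sigma$ with the vectors $\vv_n,\vv_n^*$ of \eqref{eq:18}, I would express $\langle \ee_k, g\rangle$ for $g\in c_{00}$ as a sum of a term living in the $\Sym^*$-factor (coming from $Q_\sigma$) and a term living in the $\XX^*$-factor (coming from $P_\sigma$, hence from $\XB^*$). The crucial point is that on each block $\sigma_n$ the functional $\vv_n^*$ is a constant multiple of the averaging vector, so the $P_\sigma$-part of $\ee_k$ reproduces $\xx_n^*$ up to the normalizing scalar $\Lambda_{\abs{\sigma_n}}/\abs{\sigma_n}$; this is exactly where the perturbing constants $c_n$ enter, and where \eqref{eq:FFSubSym} is used to show that the induced scalars $\Lambda_{\abs{\sigma_n}}/\abs{\sigma_n}\approx \Gamma_{\abs{\sigma_n}}/1$ are semi-normalized, so that $\BB=(c_n\,\xx_n^*)_{n=1}^\infty$ is a genuine semi-normalized perturbation of $\XB^*$. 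Matching the $Q_\sigma$-part against the norm of $\Sym_0^*$ (rather than $\Sym^*$, since we work with the subspace spanned by $c_{00}$) then identifies the dual norm as precisely $\norm{\cdot}_{\BB,\sigma,\Sym_0^*}$.

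Concretely, the order of steps is: (1) verify that $\EB$ is a Schauder basis of $\YY[\XB,\Sym,\sigma]$ by checking that the partial-sum projections are uniformly bounded, which follows because $P_\sigma$ and $Q_\sigma$ commute with tail projections adapted to the ordered partition $\sigma$; (2) compute $\ee_k^*$ against $g\in c_{00}$ via $\langle \ee_k,g\rangle=\langle P_\sigma(\ee_k),g\rangle+\langle Q_\sigma(\ee_k),g\rangle$ and self-adjointness; (3) read off the $\Sym^*$- and $\XX^*$-contributions and rescale the latter by the block constants to obtain $\BB$; (4) invoke \eqref{eq:FFSubSym} to confirm the constants are semi-normalized and $(\Sym_0^*)_0^*=\Sym$ together with $\YB^{**}\sim\YB$ to close the duality loop, thereby identifying the span of $\EB^*$ with $\YY[\BB,\Sym_0^*,\sigma]$. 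The main obstacle I anticipate is bookkeeping the normalization: one must be careful that the factor $\Lambda_{\abs{\sigma_n}}/\abs{\sigma_n}$ absorbed into $\vv_n^*$ is the same factor that gets redistributed as $c_n$ onto $\xx_n^*$, and that passing to $\Sym_0^*$ (the closed span of $c_{00}$, which may be a proper subspace of $\Sym^*$) does not disturb the fundamental-function identity \eqref{eq:FFSubSym}; getting these scalars to line up exactly, rather than merely up to uniform constants, is the delicate part, though the statement only asks for equivalence, so uniform two-sided bounds suffice.
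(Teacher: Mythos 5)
Your outline follows the same skeleton as the paper's argument (split along $P_\sigma+Q_\sigma$, use self-adjointness, use \eqref{eq:FFSubSym} to turn $\vv_n$ into $c_n\uu_n^*$), but it leaves unproved exactly the step that carries all the weight. Identifying the individual functionals $\ee_k^*$ is trivial: $\ee_k^*$ is just the $k$-th coordinate functional, and its decomposition under $P_\sigma+Q_\sigma$ is pure algebra. The content of the proposition is a statement about norms of linear combinations: for $f\in c_{00}$ one must prove
\[
\norm{f}_{(\YY[\XB,\Sym,\sigma])^*}\approx \norm{Q_\sigma(f)}_{\Sym^*}+\norm{\sum_{n=1}^\infty c_n \langle\uu_n^*,f\rangle\,\xx_n^*}_{\XX^*},
\]
and your ``matching the $Q_\sigma$-part against the norm of $\Sym_0^*$'' is an assertion of this equivalence, not a proof. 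Two analytic ingredients are needed, and neither appears in your outline. First, one needs that $\YY[\XB,\Sym,\sigma]$ is isomorphic to $Q_\sigma(\Sym)\oplus\XX$ (this is \cite{AADK2018}*{Theorem 3.6}, which the paper invokes), so that its dual splits into the two corresponding pieces. Second --- and this is the real crux --- one needs that functionals on the $Q_\sigma$-part are represented, with \emph{equivalent norm}, by sequences $g\in\Sym^*$ with $P_\sigma(g)=0$; concretely, one needs $\norm{Q_\sigma (f)}_{\Sym^*}\lesssim\sup\{\abs{\langle f,h\rangle}\colon h\in Q_\sigma(\Sym),\; \norm{h}_\Sym\le1\}$, and the only visible route to this is to test against $Q_\sigma(h)$ for arbitrary $h\in\Sym$, which requires the averaging projection $P_\sigma$ (hence $Q_\sigma$) to be \emph{bounded on} $\Sym$ --- a theorem about subsymmetric spaces (\cite{LinTza1977}*{Proposition 3.a.4}), not a consequence of the formal self-adjointness you invoke. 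The paper then recognizes the resulting expression as the norm of $\YY[\BB,\Sym_0^*,\sigma]$ by applying \cite{AADK2018}*{Theorem 3.6} a \emph{second} time, to the data $(\BB,\Sym_0^*,\sigma)$; your appeal in step (4) to $(\Sym_0^*)_0^*=\Sym$ and to $\YB^{**}\sim\YB$ does not substitute for either application. (Also, the part you flag as delicate --- lining up the scalars --- is in fact immediate from \eqref{eq:FFSubSym}, which gives $1\le c_m\le 2$ directly.)

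A secondary but genuine error: your justification of the Schauder-basis property, namely that $P_\sigma$ and $Q_\sigma$ ``commute with tail projections adapted to $\sigma$,'' fails for partial sums $S_{[1,k]}$ with $k$ interior to a block. For instance, if $\sigma_1=\{1,2\}$, then $S_{\{1\}}P_\sigma(\ee_1)=\tfrac12\ee_1$ while $P_\sigma S_{\{1\}}(\ee_1)=\tfrac12(\ee_1+\ee_2)$. Commutation holds only for partial sums ending at block boundaries; controlling the interior indices is where the actual work lies, and it uses the unconditionality of $\Sym$ and the basis constant of $\XB$. This fact is established in \cites{DKK2003,AADK2018}, and the paper relies on it rather than reproving it; if you intend your proof to be self-contained, this step needs a correct argument rather than the commutation claim.
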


\begin{proof}
By \cite{AADK2018}*{Theorem 3.6}, the mapping
\[
(g,x)\mapsto g+ \sum_{n=1}^\infty \xx_n^*(x)\, \vv_n, \quad g\in c_{00}\cap Q_\sigma(\Sym),\; x\in\XX,\;\abs{\supp(x)}<\infty.
\]
defines an isomorphism from $Q_\sigma(\Sym)\oplus \XX$ onto $\YY[\XB,\Sym,\sigma]$. In turn, since $P_\sigma$ and $Q_\sigma$ are complementary linear bounded projections on $\Sym$
(see \cite{LinTza1977}*{Proposition 3.a.4}), the map
\[
u^*\mapsto (u^*(Q_\sigma(\ee_n))_{n=1}^\infty
\]
defines an isomorphism from $(Q_\sigma(\Sym))^*$ onto
\[
\UU:=\{ f \in\Sym^* \colon P_\sigma(f)=0\}.
\]

We infer that, if we regard the functionals in  $(\YY[\XB,\Sym,\sigma])^*$ as sequences acting on $\YY[\XB,\Sym,\sigma]$ by means of the natural dual pairing $\langle \cdot,\cdot\rangle$, the mapping
\begin{equation}\label{eq:17}
f\mapsto T(f):=\left( (\langle f, Q_\sigma(\ee_n )\rangle)_{n=1}^\infty, \sum_{n=1}^\infty \langle f,\vv_n\rangle \, \xx_n^*\right)
\end{equation}
defines an isomorphism from $(\YY[\XB,\Sym,\sigma])^*$ onto $\UU\times \XX^*$. In \eqref{eq:17}, we use the convention that $x^*=\sum_{n=1}^\infty a_n\, \xx_n^*$ means that $x^*\in\XX^*$ and $(a_n)_{n=1}^\infty\in\FF^\NN$ satisfy $\xx^*(x)=\sum_{n=1}^\infty a_n\, \xx_n^*(x)$ for every $x\in\XX$ finitely supported.

Since $Q_\sigma$ is selft-adjoint,
\[
(\langle f, Q_\sigma(\ee_n )\rangle)_{n=1}^\infty= (\langle Q_\sigma(f), \ee_n\rangle)_{n=1}^\infty=Q_\sigma(f),\quad f\in\FF^\NN.
\]

If $(\uu_n^*)_{n=1}^\infty$ are the vectors defined as in \eqref{eq:18} corresponding to the subsymmetric space $\Sym_0^*$, and $(c_n)_{n=1}^\infty$ is as in \eqref{eq:FFSubSym},  then $\vv_n =c_n \uu_n^*$ for all $n\in\NN$. Summing up, we have
\[
T(f)=\left( Q_\sigma(f),  \sum_{n=1}^\infty \langle \uu_n^*,f\rangle \, c_n\, \xx_n^*\right), \quad f\in (\YY[\XB,\Sym,\sigma])^*.
\]

Let $\XX^*_0$ denote the subspace of $\XX^*$ spanned by $\XB^*$.  Applying \cite{AADK2018}*{Theorem 3.6} with the Schauder basis $\BB=(c_n\, \xx_n^*)_{n=1}^\infty$ of $\XX^*_0$, the subsymmetric sequence space $\Sym_0^*$, and the partition $\sigma$  gives that the mapping
\[
f \mapsto \left( Q_\sigma(f),\sum_{n=1}^\infty \langle \uu_n^*, f\rangle \, c_n \, \xx_n^*\right), \quad f\in c_{00},
\]
extend to an isomorphism from $\YY[\BB^*,\Sym_0^*,\sigma]$ onto $Q_\sigma(\Sym^*_0)\oplus \XX^*_0$. We infer that $\YY[\BB^*,\Sym_0^*,\sigma]$ is, up to an equivalent norm, the closed subspace of $(\YY[\XB,\Sym,\sigma])^*$ spanned by $c_{00}$.
\end{proof}

\begin{lemma}\label{lem:decreasing}
Suppose that a sequence space $\UU$ satisfies an upper $q$-estimate, $1\le q<\infty$, and let $(\udf_m)_{m=1}^\infty$ denote the fundamental function of its unit vector system. Then, $(\udf_m^q/m)_{m=1}^\infty$ is essentially decreasing.
\end{lemma}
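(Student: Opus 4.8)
The plan is to show directly that $\udf_n^q/n\lesssim\udf_m^q/m$ whenever $m\le n$, which is exactly the assertion that $(\udf_m^q/m)_{m=1}^\infty$ is essentially decreasing. First I would unwind the fundamental function. Since by hypothesis the unit vector system $\EB=(\ee_n)_{n=1}^\infty$ is a normalized $1$-unconditional basis of $\UU$, the quantity $\norm{\Ind_{\varepsilon,A}[\EB]}=\norm{\sum_{n\in A}\varepsilon_n\,\ee_n}$ is independent of the choice of signs $\varepsilon\in\EE^A$, so the fundamental function reduces to
\[
\udf_m=\sup_{\abs{A}\le m}\norm{\sum_{n\in A}\ee_n},\quad m\in\NN.
\]

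Next I would fix $m\le n$ together with an arbitrary finite set $A\subseteq\NN$ with $\abs{A}=n$, and partition it into consecutive blocks $A=\bigsqcup_{i=1}^N B_i$, each of cardinality at most $m$, with $N=\lceil n/m\rceil\le 2n/m$ (the last bound using $n\ge m$). The vectors $y_i:=\sum_{j\in B_i}\ee_j$ are pairwise disjointly supported with respect to the lattice structure induced by $\EB$, and each satisfies $\norm{y_i}\le\udf_m$ because $\abs{B_i}\le m$. Applying the upper $q$-estimate, with constant $C$ say, to the disjoint family $(y_i)_{i=1}^N$ yields
\[
\norm{\sum_{n\in A}\ee_n}=\norm{\sum_{i=1}^N y_i}\le C\left(\sum_{i=1}^N\norm{y_i}^q\right)^{1/q}\le C\,N^{1/q}\,\udf_m\le C\left(\frac{2n}{m}\right)^{1/q}\udf_m.
\]

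Finally, taking the supremum over all $A$ with $\abs{A}=n$ gives $\udf_n\le C(2n/m)^{1/q}\udf_m$; raising to the power $q$ and rearranging then produces $\udf_n^q/n\le 2C^q\,\udf_m^q/m$ for all $m\le n$, which is the claim. There is no serious obstacle here: the argument is a straightforward block decomposition combined with the upper $q$-estimate. The only points needing minor care are the counting of blocks (to pass from $\lceil n/m\rceil$ to the clean factor $2n/m$, valid precisely because $m\le n$) and the initial reduction, which uses $1$-unconditionality to make the sign-dependence in $\udf_m$ irrelevant, so that a single set of size at most $m$ already controls the norm of each block $y_i$.
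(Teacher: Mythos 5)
Your proof is correct and follows essentially the same route as the paper's: both rest on decomposing a set of size $n$ into blocks of size at most $m$ and applying the upper $q$-estimate to the disjointly supported block sums, yielding $\udf_n^q/n\le 2C^q\,\udf_m^q/m$. The only difference is organizational — the paper first records $\udf_{km}\le C k^{1/q}\udf_m$ and then sandwiches $n$ between consecutive multiples of $m$, whereas you absorb that interpolation step by working directly with $\lceil n/m\rceil$ blocks.
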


\begin{proof}
We have
\[
\udf_{km} \le C k^{1/q} \udf_m, \quad k,\,m\in\NN.
\]
where $C$ is upper $q$-estimate constant of $\UU$. Given $n$, $m\in\NN$ with $n>m$, we pick $k\in\NN$ with $km<n\le (k+1)m$. Then,
\[
\frac{\udf_{n}^q}{n}
\le \frac{\udf_{(k+1)m}^q}{km}\le C^r \frac{k+1}{k} \frac{\udf_{m}^q}{m}
\le 2 C^q  \frac{\udf_{m}^q}{m}.\qedhere.
\]
\end{proof}

Besides the LRP, we will use the \emph{upper regularity property} (URP for short) of sequences of positive scalars. We say that a sequence $(s_m)_{m=1}^\infty$ has the URP for short if there is $r\in\NN$ such that
\[
s_{rm}\le \frac{1}{2} r s_m, \quad m\in\NN.
\]

\begin{lemma}\label{lem:EstimatesRP}
Suppose that a sequence space $\UU$ satisfies an upper $q$-estimate for some $q>1$ (resp., a lower $r$-estimate for some $r<\infty$). Suppose also that the unit vector system of $\UU$ is a democratic basis. Then, its fundamental function has the URP (resp., the LRP).
\end{lemma}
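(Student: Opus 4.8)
The plan is to prove the two implications separately. In each case I will compare $\udf_{\rho m}$ with $\udf_m$ and obtain a power-type estimate $\udf_{\rho m}\lesssim\rho^{1/q}\udf_m$ (resp.\ $\udf_{\rho m}\gtrsim\rho^{1/r}\udf_m$), and then fix the regularity integer $\rho$ large enough; the sign of the exponent that appears is exactly what makes each direction work, which is where $q>1$ (resp.\ $r<\infty$) is used.

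For the URP, Lemma~\ref{lem:decreasing} does essentially all the work, and democracy is not in fact needed. That lemma provides a constant $K$ with $\udf_n^q/n\le K\,\udf_m^q/m$ whenever $n\ge m$. Specializing to $n=\rho m$ gives
\[
\udf_{\rho m}\le (K\rho)^{1/q}\,\udf_m,\quad m,\,\rho\in\NN.
\]
Since $q>1$, we have $(K\rho)^{1/q}/\rho=K^{1/q}\rho^{1/q-1}\to 0$ as $\rho\to\infty$, so I fix $\rho\in\NN$ with $(K\rho)^{1/q}\le\tfrac12\rho$. Then $\udf_{\rho m}\le\tfrac12\rho\,\udf_m$ for all $m$, which is precisely the URP.

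For the LRP, democracy is indispensable. Let $C$ be the lower $r$-estimate constant and $\Delta$ the democracy constant. By $1$-unconditionality of $\EB$ the phases are irrelevant, so $\udf_m=\sup_{\abs{A}\le m}\norm{\sum_{n\in A}\ee_n}$, and by monotonicity the supremum is over sets of size exactly $m$; democracy then yields $\udf_m\le\Delta\,\norm{\sum_{n\in A'}\ee_n}$ for every $A'$ with $\abs{A'}=m$. Now fix $m,\rho\in\NN$, choose $A\subseteq\NN$ with $\abs{A}=\rho m$, and split it into $\rho$ pairwise disjoint blocks $A_1,\dots,A_\rho$ of size $m$. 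The lower $r$-estimate applied to the disjointly supported vectors $\sum_{n\in A_i}\ee_n$ gives
\[
\left(\sum_{i=1}^\rho \norm{\sum_{n\in A_i}\ee_n}^r\right)^{1/r}\le C\,\norm{\sum_{n\in A}\ee_n}\le C\,\udf_{\rho m}.
\]
Bounding each block norm below by $\udf_m/\Delta$, the left-hand side is at least $\rho^{1/r}\udf_m/\Delta$, whence $\udf_{\rho m}\ge\rho^{1/r}\udf_m/(C\Delta)$. Because $r<\infty$ we have $\rho^{1/r}\to\infty$, so I fix $\rho$ with $\rho^{1/r}\ge 2C\Delta$ and conclude $\udf_{\rho m}\ge 2\udf_m$ for all $m$, i.e.\ the LRP.

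The only genuinely delicate point is the lower bound $\norm{\sum_{n\in A_i}\ee_n}\ge\udf_m/\Delta$ in the LRP argument: this is exactly where democracy cannot be removed, since without it a block of size $m$ could carry far less norm than the supremum defining $\udf_m$, and the lower $r$-estimate by itself would not reproduce $\udf_m$ on the right-hand side. Everything else reduces to the elementary facts $\rho^{1/q-1}\to0$ and $\rho^{1/r}\to\infty$, which encode the hypotheses $q>1$ and $r<\infty$ respectively.
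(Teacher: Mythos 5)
Your proof is correct. Both halves go through: for the URP, the bound $\udf_{\rho m}\le (K\rho)^{1/q}\udf_m$ from Lemma~\ref{lem:decreasing} combined with $\rho^{1/q-1}\to 0$ gives an integer $\rho$ with $\udf_{\rho m}\le\tfrac12\rho\,\udf_m$ for all $m$; for the LRP, splitting a set of size $\rho m$ into $\rho$ disjoint blocks of size $m$, applying the lower $r$-estimate to the disjointly supported indicator sums, and using democracy to bound each block norm below by $\udf_m/\Delta$ yields $\udf_{\rho m}\ge \rho^{1/r}\udf_m/(C\Delta)$, and $r<\infty$ lets you fix $\rho$ with $\rho^{1/r}\ge 2C\Delta$. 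The constants all check out, and the reduction of the supremum to sets of exact cardinality $m$ via $1$-unconditionality is handled correctly.

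Your route is, however, genuinely different from the paper's. The paper disposes of this lemma in one line by citing Propositions 1.f.3 and 1.f.7 of Lindenstrauss--Tzafriri together with Proposition 4.1 of Dilworth--Kalton--Kutzarova--Temlyakov, i.e., it routes the statement through known lattice-index machinery and an existing result linking democratic fundamental functions to regularity properties. You instead give a self-contained, first-principles argument whose only external input is Lemma~\ref{lem:decreasing}, which the paper itself proves just beforehand. What your approach buys is transparency about the hypotheses: it makes visible that democracy plays no role in the URP half (the upper $q$-estimate alone suffices there), and it isolates exactly the single step in the LRP half where democracy enters, namely the lower bound $\norm{\sum_{n\in A_i}\ee_n}\ge\udf_m/\Delta$ for an \emph{arbitrary} block of size $m$. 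What the paper's approach buys is brevity and the placement of the lemma inside a broader framework of convexity/concavity indices, at the cost of sending the reader to two external references.
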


\begin{proof}
Just combine \cite{LinTza1979}*{Propositions 1.f.3 and 1.f.7} with \cite{DKKT2003}*{Proposition 4.1}.
\end{proof}

\begin{lemma}\label{lem:SSHilbertian}
Suppose that a sequence space $\UU$ satisfies an upper $q$-estimate, $1\le q<\infty$, and let $\ww$ be the weight whose primitive sequence is the fundamental function of the unit vector system of $\UU$. Then, $d_{1,q}(\ww)\subseteq \UU$ continuously.
\end{lemma}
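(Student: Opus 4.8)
The plan is to prove the embedding as a single quasi-norm inequality: there is a constant $C$, independent of $f$, with $\norm{f}_\UU\le C\norm{f}_{d_{1,q}(\ww)}$ for every $f\in c_{00}$; this suffices because $c_{00}$ is dense in $d_{1,q}(\ww)$ and the inclusion is realized by the common unit vector basis. Writing $(\udf_m)_{m=1}^\infty$ for the fundamental function, note first that $\udf_m$ equals the primitive sequence $s_m$ of $\ww$, and that the upper $q$-estimate gives $\udf_{2m}\le C\,2^{1/q}\udf_m$ (split a set of size $2m$ into two halves and apply the estimate to the two disjoint indicators), so $(\udf_m)$ is doubling. Hence Lemma~\ref{lem:equivalentnormbis} lets me replace the target quasi-norm by $\bigl(\sum_n a_n^q(\udf_n^q-\udf_{n-1}^q)\bigr)^{1/q}$, where $(a_n)_{n=1}^\infty$ is the non-increasing rearrangement of $\abs{f}$.

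The heart of the argument is to decompose $f$ into \emph{value-layers} rather than index-blocks. Put $a_1=\max_n\abs{f_n}$ and, for $j\ge 0$, let $E_j=\{n:2^{-(j+1)}a_1<\abs{f_n}\le 2^{-j}a_1\}$; only finitely many layers are nonempty, indexed by strictly increasing integers $j_1<\dots<j_K$, and they are pairwise disjoint. Since the unit vector basis is $1$-unconditional and $\norm{\Ind_E}_\UU\le\udf_{\abs{E}}$ for every finite $E$ (take $\varepsilon\equiv 1$ in the definition of $\udf$), each layer obeys $\norm{f\Ind_{E_{j_i}}}_\UU\le 2^{-j_i}a_1\,\udf_{m_i}$ with $m_i=\abs{E_{j_i}}$. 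Applying the upper $q$-estimate to these disjoint pieces yields $\norm{f}_\UU\le C\bigl(\sum_{i=1}^K c_i\,\udf_{m_i}^q\bigr)^{1/q}$, where $c_i:=(2^{-j_i}a_1)^q$ decays geometrically in $i$.

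It remains to dominate this sum by the rewritten target quasi-norm. Setting $N_i=m_1+\dots+m_i$ and $N_0=0$, the layer $E_{j_i}$ occupies the positions $(N_{i-1},N_i]$ of the non-increasing rearrangement, on which $a_n>2^{-(j_i+1)}a_1$; summing the layer contributions gives $\sum_n a_n^q(\udf_n^q-\udf_{n-1}^q)\ge 2^{-q}\sum_i c_i(\udf_{N_i}^q-\udf_{N_{i-1}}^q)$. For the reverse passage I bound $\udf_{m_i}^q\le\udf_{N_i}^q$ and sum by parts: because $j_1<\dots<j_K$ are integers and $q\ge 1$ one has $\sum_{i\ge l}c_i\le 2c_l$, so Abel summation gives $\sum_i c_i\,\udf_{N_i}^q\le 2\sum_i c_i(\udf_{N_i}^q-\udf_{N_{i-1}}^q)$. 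Chaining the two displays produces $\norm{f}_\UU\lesssim\norm{f}_{d_{1,q}(\ww)}$.

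The main obstacle, and the reason the value-layer decomposition is essential, is that the naive splitting into dyadic index-blocks of the rearrangement is too lossy: when $\udf_m$ grows slowly (for instance $\UU=c_0$, where $\udf_m\equiv 1$), combining $\approx\log m$ comparable blocks through the upper $q$-estimate inflates the bound by a factor $(\log m)^{1/q}$ and so cannot give a uniform constant. Decomposing by height instead forces the coefficients $c_i$ to decay geometrically, and the summation by parts above is precisely the device that converts this geometric decay into a constant independent of the number of layers. It is worth noting that only the monotonicity and doubling of $(\udf_m)$ enter, not the finer regularity recorded in Lemma~\ref{lem:decreasing}.
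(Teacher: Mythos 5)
Your proof is correct and follows essentially the same route as the paper's: a dyadic decomposition of $f$ by coefficient height, the upper $q$-estimate applied across the disjoint layers, an Abel-summation argument that converts the geometric decay of the layer heights into telescoping differences $\udf_{N_i}^q-\udf_{N_{i-1}}^q$, and a final appeal to Lemma~\ref{lem:equivalentnormbis}. The only differences are cosmetic (indexing over nonempty layers only, and your explicit verification that the fundamental function is doubling, which the paper leaves implicit).
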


\begin{proof}
Pick $f=(a_n)_{n=1}^\infty\in c_{00}$. Put $t=\max_n\abs{a_n}$, and for each $k\in\NN$ consider the set
\[
E_k=\{n\in\NN \colon t 2^{-k}<\abs{a_n} \le t 2^{-k+1}\}.
\]
Notice that $(E_k)_{k=1}^\infty$ is a partition of  $\{n\in\NN \colon a_n\not=0\}$. Set $n_k=\abs{E_k}$ ($n_0=0$) and $m_k=\sum_{j=1}^k n_j$, so that, if $(b_n)_{n=1}^\infty$ is the non-increasing rearrangement of $f$,
\[
\{n\in\NN \colon t 2^{-k}< b_n \le t 2^{-k+1}\}
=\{n\in\NN \colon 1+m_{k-1}\le n \le m_k\}.
\]
For $n\in\NN$, let $s_n=\udf_n[\EB,\UU]$ and $w_n'=s_n^r-s_{n-1}^r$.  Let $C$ be the upper $C$-estimate constant of $\UU$. Using Abel's summation formula gives
\begin{align*}
\norm{f}^q &=\norm{\sum_{k=1}^\infty \sum_{n\in E_k} a_n \, \ee_n}^q\\
&\le  C^q \sum_{k=1}^\infty \norm{\sum_{n\in E_k} a_n \,\ee_n}^q\\
&\le C^q \sum_{k=1}^\infty  (t 2^{-k+1} s_{m_k})^q \\
&=  (2tC)^q \sum_{k=1}^\infty 2^{-kq} \sum_{j=1}^k ( s_{m_j}^q-s_{m_{j-1}}^q)\\
&= \frac{ (2tC)^q}{1-2^{-q}} \sum_{j=1}^\infty 2^{-jq} ( s_{m_j}^q-s_{m_{j-1}}^q)\\
&= \frac{ (4C)^q}{2^q-1} \sum_{j=1}^\infty  \sum_{n=1+m_{j-1}}^{m_j} ( t2^{-j})^q ( s_{n}^q-s_{n-1}^q)\\
&\le \frac{ (4C)^q}{2^q-1} \sum_{n=1}^\infty b_n^q ( s_{n}^q-s_{n-1}^q).
\end{align*}
Applying Lemma~\ref{lem:equivalentnormbis} puts an end to the proof.
\end{proof}

\begin{proposition}\label{prop:DKKHilbertian}
Let $\XB$ be a Schauder basis of a Banach space $\XX$, $\Sym$ be a subsymmetric sequence space, and $\sigma=(\sigma_n)_{n=1}^\infty$ be an ordered partition of $\NN$ with
\[
\sum_{n=1}^{m-1} \abs{\sigma_n}\le D \abs{\sigma_m}, \quad m\in \NN,
\]
for some $D\in\NN$.  Let $1<q<\infty$. Suppose that $\Sym$ is satisfies an upper $q$-estimate, that $\XB$ $q$-Hilbertian, and that $(\Lambda_m)_{m=1}^\infty$ has the LRP. Then, the unit vector system of $\YY[\XB,\Sym,\sigma]$ is  $d_{1,q}(\ww)$-Hilbertian, that is,
\[
d_{1,q}(\ww)\subseteq \YY[\XB,\Sym,\sigma]
\]
continuously, where $\ww=(\Lambda_n-\Lambda_{n-1})_{n=1}^\infty$.
\end{proposition}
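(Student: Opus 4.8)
The plan is to estimate the two summands of the DKK norm
\[
\norm{f}_{\XB,\sigma,\Sym}=\norm{Q_\sigma(f)}_\Sym+\norm{\sum_{n=1}^\infty \vv_n^*(f)\, \xx_n}_\XX
\]
separately, for $f=(a_n)_{n=1}^\infty\in c_{00}$, and to bound each by a multiple of $\norm{f}_{d_{1,q}(\ww)}$. I first note that the primitive sequence of $\ww=(\Lambda_n-\Lambda_{n-1})_{n=1}^\infty$ is precisely the fundamental function $(\Lambda_n)_{n=1}^\infty$, so $\ww$ is exactly the weight attached to $\Sym$ in Lemma~\ref{lem:SSHilbertian}; moreover, since $(\Lambda_n)_{n=1}^\infty$ has the LRP, Lemma~\ref{lem:equivalentnorm} gives $\norm{f}_{d_{1,q}(\ww)}^q\approx \sum_{n=1}^\infty (\Lambda_n^q/n)\,(a_n^*)^q$, where $(a_n^*)_{n=1}^\infty$ is the non-increasing rearrangement of $f$.

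The first summand is the easy one. Because $\Sym$ satisfies an upper $q$-estimate, Lemma~\ref{lem:SSHilbertian} applied with $\UU=\Sym$ yields the continuous embedding $\norm{g}_\Sym\lesssim\norm{g}_{d_{1,q}(\ww)}$. Combining this with the boundedness of $Q_\sigma$ on $\Sym$ (the same fact used in the proof of Proposition~\ref{eq:DualDKK}) gives $\norm{Q_\sigma(f)}_\Sym\lesssim\norm{f}_\Sym\lesssim\norm{f}_{d_{1,q}(\ww)}$.

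For the second summand I would use that $\XB$ is $q$-Hilbertian to reduce matters to the scalar estimate $\norm{\sum_n\vv_n^*(f)\,\xx_n}_\XX\lesssim (\sum_n\abs{\vv_n^*(f)}^q)^{1/q}$. Writing $d_n=\abs{\sigma_n}$ and applying Jensen's inequality (the power-mean inequality, valid for $q>1$) inside each block gives
\[
\abs{\vv_n^*(f)}^q=\Big(\frac{\Lambda_{d_n}}{d_n}\Big)^q\Big|\sum_{j\in\sigma_n}a_j\Big|^q\le \frac{\Lambda_{d_n}^q}{d_n}\sum_{j\in\sigma_n}\abs{a_j}^q.
\]
The crux is then to replace the block weight $\Lambda_{d_n}^q/d_n$ by the pointwise weight $\Lambda_j^q/j$. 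The growth hypothesis $\sum_{n=1}^{m-1}\abs{\sigma_n}\le D\abs{\sigma_m}$ forces $d_n\ge j/(D+1)$ for every $j\in\sigma_n$, since $j\le\sum_{k=1}^n\abs{\sigma_k}\le (D+1)\abs{\sigma_n}$. Feeding $d_n\gtrsim j$ into the essential monotonicity of $(\Lambda_m^q/m)_{m=1}^\infty$ from Lemma~\ref{lem:decreasing}, together with the fact that $(\Lambda_m)_{m=1}^\infty$ is non-decreasing, produces $\Lambda_{d_n}^q/d_n\lesssim\Lambda_j^q/j$ with a constant depending only on $D$ and the upper-estimate constant. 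Summing over blocks then gives $\sum_n\abs{\vv_n^*(f)}^q\lesssim\sum_{j=1}^\infty (\Lambda_j^q/j)\,\abs{a_j}^q$.

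Finally I would discard the original ordering by a rearrangement argument: since $(\Lambda_j^q/j)_{j=1}^\infty$ is essentially non-increasing, its non-increasing rearrangement is comparable to itself, so the Hardy--Littlewood inequality gives $\sum_j (\Lambda_j^q/j)\,\abs{a_j}^q\lesssim \sum_j(\Lambda_j^q/j)\,(a_j^*)^q\approx\norm{f}_{d_{1,q}(\ww)}^q$ by the equivalence recorded in the first paragraph. Adding the bounds for the two summands then closes the proof. I expect the main obstacle to be the middle step of the second summand---passing from block-averaged weights to pointwise Lorentz weights---where the partition growth condition and the essential monotonicity of $(\Lambda_m^q/m)$ must be used in tandem; the subsequent rearrangement step is routine and rests on the same monotonicity.
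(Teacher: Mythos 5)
Your proposal is correct and follows essentially the same route as the paper's proof: the same splitting of the DKK-norm, Lemma~\ref{lem:SSHilbertian} plus boundedness of $Q_\sigma$ for the $\Sym$-summand, and the power-mean inequality combined with the partition growth condition, Lemma~\ref{lem:decreasing}, and Lemma~\ref{lem:equivalentnorm} for the $\XX$-summand. The only cosmetic difference is in the rearrangement step, where the paper first replaces $\Lambda_j^q/j$ by its genuinely non-increasing minorant $T_j=\inf_{i\le j}\Lambda_i^q/i$ before applying the rearrangement inequality, while you apply Hardy--Littlewood directly to the essentially monotone weight; both are valid.
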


\begin{proof}
Taking into account that $Q_\sigma$ is bounded on $\Sym$, we infer from Lemma~\ref{lem:SSHilbertian} that there is a constant $C_0$ such that
\[
\norm{Q_{\sigma}(f)}_\Sym\le C_0 \norm{f}_{d_{1,q}(\ww)}, \quad f\in c_{00}.
\]

By Lemma~\ref{lem:decreasing}, there is a constant $C_1$ such that
\[
\frac{1}{C_1} \frac{\Lambda_n^q}{n}
\le T_m:=\inf_{j\le m} \frac{\Lambda_j^q}{j} ,\quad n\ge m.
\]
Therefore, if $n\in\NN$ and  $j\in\sigma_n$,
\[
\frac{\Lambda^q_{\abs{\sigma_n}}}{\abs{\sigma_n}}
\le (D+1) \frac{\Lambda^q_{(D+1)\abs{\sigma_n}}}{(D+1)\abs{\sigma_n}}
\le (D+1)C_1 T_j.
\]

By Lemma~\ref{lem:equivalentnorm}, there is a constant $C_2$ such that
\[
\sum_{j=1}^\infty \frac{\Lambda_j^q b_j^q}{j} \le C_2^q \norm{f}_{d_{1,q}(\ww)}^q,\; (b_j)_{j=1}^\infty\; \text{non-increasing rearrangement of}\; f.
\]
Let $C_3$ be the norm of the series transform with respect to $\XB$, regarded as an operator from $\ell_q$ into $\XX$. Pick $f=(a_j)_{j=1}^\infty\in c_{00}$, and let  $(b_j)_{j=1}^\infty$ be its non-increasing rearrangement. Using H\"older's inequality and the rearrangement inequality we obtain
\begin{align*}
\norm{\sum_{n=1}^\infty \langle \vv_n^*,f\rangle\, \xx_n}^q
&\le C_3^q \sum_{n=1}^\infty \abs{\langle \vv_n^*,f\rangle}^q \\
&\le C_3^q \sum_{n=1}^\infty \frac{\Lambda^q_{\abs{\sigma_n}}}{\abs{\sigma_n}} \sum_{j\in\sigma_n} \abs{a_j}^q \\
&\le (D+1) C_1^qC_3^q\sum_{j=1}^\infty T_j \abs{a_j}^q \\
&\le (D+1) C_1^qC_3^q \sum_{j=1}^\infty T_j b_j^q \\
&\le (D+1) C_1^qC_3^q \sum_{j=1}^\infty \frac{\Lambda_j^q}{j} b_j^q \\
&\le (D+1) C_1^qC_2^qC_3^q  \norm{f}_{d_{1,q}(\ww)}^q.
\end{align*}

Summing up, we obtain $\norm{f}_{\YY[\XB,\Sym,\sigma]}\le C \norm{f}_{d_{1,q}(\ww)}$, where 
\[
C=C_0+(1+D)^{1/q} C_1 C_2 C_3.\qedhere
\]
\end{proof}

\begin{proposition}\label{prop:DKKBesselian}
Let $\XB$ be a Schauder basis of a Banach space $\XX$, $\Sym$ be a subsymmetric sequence space, and $\sigma=(\sigma_n)_{n=1}^\infty$ be an ordered partition of $\NN$ with
\begin{equation}\label{eq:OPConditionA}
\sum_{n=1}^{m-1} \abs{\sigma_n}\lesssim \abs{\sigma_m}, \quad m\in \NN.
\end{equation}
Let $1<r<\infty$. Suppose  that $\Sym$ satisfies a lower $r$-estimate, that $\XB$ is $r$-Besselian, and that $(\Lambda_m)_{m=1}^\infty$ has the URP. Then, the unit vector system of $\YY[\XB,\Sym,\sigma]$ is $d_{1,r}(\ww)$-Besselian, that is,
\[
\YY[\XB,\Sym,\sigma] \subseteq d_{1,r}(\ww)
\]
continuously, where $\ww=(\Lambda_n-\Lambda_{n-1})_{n=1}^\infty$.
\end{proposition}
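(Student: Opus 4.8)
The plan is to deduce this Besselian embedding from its Hilbertian counterpart, Proposition~\ref{prop:DKKHilbertian}, by duality, exploiting the description of the dual basis just obtained in Proposition~\ref{eq:DualDKK}. Write $\YY=\YY[\XB,\Sym,\sigma]$ and $\UU=d_{1,r}(\ww)$. Since $1<r<\infty$ and $(\Lambda_m)_{m=1}^\infty$ is regular, $\UU$ is reflexive (cf.\ Theorem~\ref{thm:ABW}), so the first step is to record the general duality between Besselian and Hilbertian systems: for a reflexive sequence space $\UU$, the unit vector system $\EB$ of $\YY$ is $\UU$-Besselian if and only if its dual basis $\EB^*$ is $\UU_0^*$-Hilbertian. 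Indeed, boundedness of the coefficient transform $\YY\to\UU$ is equivalent, through the pairing $\langle (a_n)_n,(\langle\ee_n^*,f\rangle)_n\rangle=\langle\sum_n a_n\,\ee_n^*,f\rangle$ and the reflexivity identity $(\UU_0^*)^*=\UU$, to boundedness of the series transform $\UU_0^*\to\YY^*$ with respect to $\EB^*$.

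Next I would identify the dual object. By Proposition~\ref{eq:DualDKK}, $\EB^*$ is equivalent to the unit vector system of $\YY[\BB,\Sym_0^*,\sigma]$, where $\BB=(c_n\,\xx_n^*)_{n=1}^\infty$ is a semi-normalized perturbation of $\XB^*$. I would then verify that $(\BB,\Sym_0^*,\sigma)$ meets the hypotheses of Proposition~\ref{prop:DKKHilbertian} for the conjugate exponent $r'$: the lower $r$-estimate of $\Sym$ dualizes to an upper $r'$-estimate of $\Sym_0^*$ by lattice duality (\cite{LinTza1979}*{Proposition 1.f.5}); the $r$-Besselian basis $\XB$ dualizes to an $r'$-Hilbertian $\XB^*$, and perturbing by the semi-normalized $(c_n)_n$ preserves this by Lemma~\ref{lem:gathered}; the fundamental function $(\Gamma_m)_{m=1}^\infty$ of $\Sym_0^*$ satisfies $\Gamma_m\approx m/\Lambda_m$ by \eqref{eq:FFSubSym}, so the URP of $(\Lambda_m)_m$ translates into the LRP of $(\Gamma_m)_m$; and the partition condition \eqref{eq:OPConditionA} is unchanged. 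Proposition~\ref{prop:DKKHilbertian} then yields that the unit vector system of $\YY[\BB,\Sym_0^*,\sigma]$ is $d_{1,r'}(\ww^*)$-Hilbertian, where $\ww^*=(\Gamma_n-\Gamma_{n-1})_{n=1}^\infty$.

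It then remains to match the two Lorentz spaces, that is, to prove that, up to equivalence of quasi-norms,
\[
(d_{1,r}(\ww))_0^*=d_{1,r'}(\ww^*),\qquad \ww^*=(\Gamma_n-\Gamma_{n-1})_{n=1}^\infty,\ \Gamma_n\approx n/\Lambda_n.
\]
This Köthe-type duality for weighted Lorentz sequence spaces is where the regularity hypotheses do the real work, and I expect it to be the main obstacle: it is precisely the URP of $(\Lambda_m)_m$ (equivalently, the LRP of $(\Gamma_m)_m$) that guarantees the dual weight is again the increment sequence of a doubling primitive and that the secondary index passes to its conjugate. I would establish it either by comparing the two quasi-norms on non-increasing rearrangements via Lemma~\ref{lem:equivalentnormbis} and the regularity of $(\Lambda_m)_m$, or by invoking the corresponding duality result for the spaces $d_{1,q}(\ww)$ from \cite{AABW2021}. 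Once this identity is available, the duality of the first paragraph converts the $d_{1,r'}(\ww^*)$-Hilbertianity of $\EB^*$ into the desired $d_{1,r}(\ww)$-Besselianity of $\EB$, i.e.\ the continuous embedding $\YY[\XB,\Sym,\sigma]\subseteq d_{1,r}(\ww)$.

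Alternatively, one can argue directly, bypassing duality. Splitting $f=P_\sigma(f)+Q_\sigma(f)$, the lower $r$-estimate of $\Sym$ gives the embedding $\Sym\subseteq d_{1,r}(\ww)$ (the Besselian analogue of Lemma~\ref{lem:SSHilbertian}, proved by a dyadic decomposition of the non-increasing rearrangement together with the doubling of $(\Lambda_m)_m$), which controls the term $Q_\sigma(f)$; while for $P_\sigma(f)=\sum_n \langle\vv_n^*,f\rangle\,\vv_n$ the $r$-Besselian hypothesis on $\XB$ controls $(\langle\vv_n^*,f\rangle)_n$ in $\ell_r$, and the partition growth \eqref{eq:OPConditionA}---which forces the block endpoints to grow geometrically---collapses the distribution function of these disjointly supported averages to their largest active block, yielding $\norm{P_\sigma(f)}_{d_{1,r}(\ww)}\lesssim\big(\sum_n\abs{\langle\vv_n^*,f\rangle}^r\big)^{1/r}$ through Lemma~\ref{lem:equivalentnormbis}.
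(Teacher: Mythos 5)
Your primary argument is essentially the paper's own proof: dualize via Proposition~\ref{eq:DualDKK}, verify that $(\BB,\Sym_0^*,\sigma)$ satisfies the hypotheses of Proposition~\ref{prop:DKKHilbertian} for the conjugate exponent $r'$ (upper $r'$-estimate of $\Sym_0^*$ by lattice duality, $r'$-Hilbertianity of the perturbed dual basis, LRP of $(\Gamma_m)_{m=1}^\infty$ coming from the URP of $(\Lambda_m)_{m=1}^\infty$ through \eqref{eq:FFSubSym}), and then transfer the embedding $d_{1,r'}(\ww^*)\subseteq\YY[\BB,\Sym_0^*,\sigma]$ back through the K\"othe duality $(d_{1,r}(\ww))_0^*=d_{1,r'}(\ww^*)$; the paper settles that last identification by quoting \cite{ABW2021}*{Theorem 3.10} together with the LRP of $(\Lambda_m)_{m=1}^\infty$ (which, as you could note, follows from Lemma~\ref{lem:EstimatesRP}), rather than proving it by hand, but this is exactly the step you flag as the crux.

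Your closing paragraph, by contrast, sketches a genuinely different, duality-free route: split $f=P_\sigma(f)+Q_\sigma(f)$, use the lower $r$-estimate to get $\Sym\subseteq d_{1,r}(\ww)$ (the Besselian analogue of Lemma~\ref{lem:SSHilbertian}), and use the geometric growth of the blocks forced by \eqref{eq:OPConditionA} to prove $\norm{\sum_n a_n\vv_n}_{d_{1,r}(\ww)}\lesssim\left(\sum_n\abs{a_n}^r\right)^{1/r}$. This can indeed be carried out: sorting the blocks by the size of their (constant) coordinate values, grouping consecutive blocks of the rearrangement according to the largest-index block already present, and telescoping via Lemma~\ref{lem:equivalentnormbis} together with the subadditivity of $(\Lambda_m)_{m=1}^\infty$ yields the estimate with constant depending only on $r$ and the constant in \eqref{eq:OPConditionA}. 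What this buys is noteworthy: the direct argument appears not to use the URP of $(\Lambda_m)_{m=1}^\infty$ at all, so it would prove a formally stronger statement, whereas the paper's duality route needs the URP to identify the dual Lorentz space. As written, however, your direct argument is only a sketch; the rearrangement bookkeeping is precisely where the work lies.
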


\begin{proof}
Let $\ww^*$ be the weight whose primitive sequence is $(\Gamma_m)_{m=1}^\infty$. By \cite{LinTza1979}*{Proposition 1.f.r}, $\Sym_0^*$ satisfies an upper $r'$-estimate. In turn, by duality, $\XB^*$ is an $r'$-Hilbertian. Hence, any perturbation $\YB$ of $\XB^*$ is  $r'$-Hilbertian. An application of Proposition~\ref{prop:DKKHilbertian} gives
\begin{equation}\label{eq:DualEmb}
d_{1,r'}(\ww^*)\subseteq  \YY[\YB,\Sym_0^*,\sigma].
\end{equation}
By Lemma~\ref{lem:EstimatesRP}, $(\Lambda_n)_{n=1}^\infty$ has the LRP. Consequently, by \cite{ABW2021}*{Theorem  3.10} and Equation \eqref{eq:FFSubSym}, $d_{1,r'}(\ww^*)$ is, via the natural dual pairing, and up to an equivalent norm, the dual space of $d_{1,r}(\ww)$. In turn, by Proposition~\ref{eq:DualDKK}, we can choose $\YB$ so that $\YY[\YB,\Sym_0^*,\sigma]$ is, via the natural dual pairing, and up to an equivalent norm, the dual space of $\YY[\XB,\Sym,\sigma]$. Therefore, dualizing \eqref{eq:DualEmb} yields the desired embedding.
\end{proof}

Our next result must be regarded as a paradigm to prove the existence of almost greedy bases whose conditionality parameters grow as a given sequence.

\begin{lemma}\label{lem:BB}
Let $\XX$ be a Banach space with a Schauder basis $\BB$, Let $\Sym$ be a subsymmetric sequence space, and let  $1<q\le r<\infty$. Suppose that  $\BB$ is both $q$-Hilbertian and $r$-Hilbertian, and that  $\aunc_m[\BB,\XX] \gtrsim m^{1/q-1/r}$ for $m\in\NN$. Suppose also that $\Sym$ satisfies an upper $q$-estimate and a lower $r$-estimate, and that it is complemented in $\XX$. Let $\ww$ be a weight whose primitive sequence is equivalent to the fundamental function $(\Lambda_m)_{m=1}^\infty$ of $\Sym$. Then, $\XX$ has an almost greedy basis $\XB$ such that
\begin{enumerate}[label=(\roman*),leftmargin=*,widest=iii]
\item $\udf[\XB,\XX](m) \approx \Lambda_m$ for $m\in\NN$,
\item $\unc_m[\XB,\XX]\approx \aunc_m[\XB,\XX] \approx (\log m)^{1/q-1/r}$ for $m\ge 2$,
\item $\XB$ is $d_{1,q}(\ww)$-Hilbertian and $d_{1,r}(\ww)$-Besselian.
\item  $\XB$ is not $d_{1,q_1}(\ww)$-Hilbertian for any $q_1>q$ nor $d_{1,r_1}(\ww)$-Besselian for any $r_1<r$.
\end{enumerate}
\end{lemma}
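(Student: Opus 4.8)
The plan is to produce $\XB$ as the unit vector system of a DKK-space $\YY:=\YY[\BB,\Sym,\sigma]$, for a partition $\sigma=(\sigma_n)_{n=1}^\infty$ with geometrically growing blocks, and then to transport it to $\XX$. Concretely I would take $\abs{\sigma_n}=2^n$, so that the growth condition $\sum_{n=1}^{m-1}\abs{\sigma_n}\lesssim\abs{\sigma_m}$ of Propositions~\ref{prop:DKKHilbertian} and \ref{prop:DKKBesselian} holds and, crucially, $m_N:=\sum_{n=1}^N\abs{\sigma_n}\approx 2^N$, i.e.\ $N\approx\log m_N$. That the unit vector system of $\YY$ is an almost greedy basis with fundamental function $\udf_m[\XB,\XX]\approx\Lambda_m$ (assertion (i)) is the defining feature of the DKK-method, so for this I would quote \cite{AADK2018} and \cite{DKK2003}. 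Finally, since $\YY\cong Q_\sigma(\Sym)\oplus\XX$ by \cite{AADK2018}*{Theorem 3.6} and $\Sym$ is complemented in $\XX$, a Pełczyński decomposition argument (as in the proof of Corollary~\ref{cor:CPlp}) gives $\YY\cong\XX$; transporting the basis along this isomorphism yields an almost greedy basis of $\XX$ whose conditionality parameters, being isomorphic invariants up to constants, agree with those computed in $\YY$.

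Assertion (iii) and the upper half of (ii) come from the machinery already assembled. As $\Sym$ is subsymmetric, its unit vector system is democratic, so Lemma~\ref{lem:EstimatesRP} converts the upper $q$-estimate into the URP and the lower $r$-estimate into the LRP for $(\Lambda_m)_{m=1}^\infty$. Proposition~\ref{prop:DKKHilbertian} (using that $\BB$ is $q$-Hilbertian) then shows $\XB$ is $d_{1,q}(\ww)$-Hilbertian, and Proposition~\ref{prop:DKKBesselian} (which needs $\BB$ to be $r$-Besselian) shows $\XB$ is $d_{1,r}(\ww)$-Besselian, giving (iii). Feeding these into Lemma~\ref{lem:BHCC} and using \eqref{eq:domWLorentz} yields
\[
\unc_m[\XB,\XX]\lesssim \dom_m[d_{1,q}(\ww),d_{1,r}(\ww)]=(H_m[\ww])^{1/q-1/r};
\]
since $(\Lambda_m)$ has the LRP, Lemma~\ref{lem:equivalentnorm} gives $H_m[\ww]\approx H_m\approx\log m$, so $\unc_m[\XB,\XX]\lesssim(\log m)^{1/q-1/r}$.

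The crux is the matching lower bound $\aunc_m[\XB,\XX]\gtrsim(\log m)^{1/q-1/r}$, where the geometric block sizes convert the polynomial conditionality of $\BB$ into logarithmic conditionality of $\XB$. The mechanism is that the range of $P_\sigma$ embeds isometrically into the $\XX$-part of the DKK-norm: for $f=\sum_{n=1}^N c_n\,\vv_n$ one has $Q_\sigma(f)=0$ and $\vv_n^*(f)=c_n$, so $\norm{f}_\YY=\norm{\sum_{n=1}^N c_n\,\xx_n}_\XX$, while for $B\subseteq\NN[N]$ the projection onto $A=\bigcup_{n\in B}\sigma_n$ satisfies $S_A(f)=\sum_{n\in B}c_n\,\vv_n$, whence $\norm{S_A(f)}_\YY=\norm{\sum_{n\in B}c_n\,\xx_n}_\XX$. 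Choosing $(c_n)_{n=1}^N$ and $B$ so as to realize $\aunc_N[\BB,\XX]$ and noting that $f$ is supported on $\NN[m_N]$ gives $\aunc_{m_N}[\XB,\XX]\ge\aunc_N[\BB,\XX]\gtrsim N^{1/q-1/r}$. Since $N\approx\log m_N$, while $\aunc$ is nondecreasing and $\log m_{N+1}\approx\log m_N$, this upgrades to $\aunc_m[\XB,\XX]\gtrsim(\log m)^{1/q-1/r}$ for every $m\ge 2$; combined with $\aunc_m\le\unc_m\lesssim(\log m)^{1/q-1/r}$ this is assertion (ii).

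Assertion (iv) is a cleanup via Corollary~\ref{cor:OptConditions}. By (ii), \eqref{eq:domWLorentz} and Lemma~\ref{lem:equivalentnorm} we have $\unc_m[\XB,\XX]\approx\dom_m[d_{1,q}(\ww),d_{1,r}(\ww)]$, so part (ii) of the corollary applies with $\UU_1=d_{1,q}(\ww)$ and $\UU_2=d_{1,r}(\ww)$. For $q_1>q$, taking $\UU_1'=d_{1,q_1}(\ww)$, $\UU_2'=d_{1,r}(\ww)$ gives $\dom_m[\UU_1',\UU_2']/\dom_m[\UU_1,\UU_2]\approx(\log m)^{1/q_1-1/q}\to 0$, so $\XB$ is either not $d_{1,q_1}(\ww)$-Hilbertian or not $d_{1,r}(\ww)$-Besselian; as (iii) gives the latter, the former must fail. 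The statement for $r_1<r$ is symmetric, using that $\XB$ is $d_{1,q}(\ww)$-Hilbertian. I expect the main obstacle to be the $\aunc$ lower bound: one must choose $\sigma$ so that the two propositions of this section both apply while simultaneously the block-lifting recovers $\aunc_N[\BB,\XX]$ with $N\approx\log m$, and geometric block growth is exactly what reconciles these competing requirements.
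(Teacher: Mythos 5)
Your overall strategy (DKK construction with geometrically growing blocks, Lemma~\ref{lem:EstimatesRP} plus Propositions~\ref{prop:DKKHilbertian} and \ref{prop:DKKBesselian} for the embeddings, Corollary~\ref{cor:OptConditions} with \eqref{eq:domWLorentz} and Lemma~\ref{lem:equivalentnorm} for (ii) and (iv)) is the paper's strategy, and your hands-on computation of the lower bound $\aunc_{m_N}[\EB,\YY]\ge\aunc_N[\BB,\XX]$ via vectors in the range of $P_\sigma$ is correct (the paper instead cites \eqref{eq:AUNCCDS} and \cite{AADK2018}*{Proposition 3.8 and Theorem 3.17}). The genuine gap is the step $\YY\simeq\XX$. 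Building the DKK space over $\BB$ alone, \cite{AADK2018}*{Theorem 3.6} gives $\YY[\BB,\Sym,\sigma]\simeq Q_\sigma(\Sym)\oplus\XX$, so you must absorb the summand $Q_\sigma(\Sym)$, the kernel of the averaging projection, into $\XX$. The Pe{\l}czy\'{n}ski-type argument you invoke does not apply here: the available hypotheses are that $\Sym$ is complemented in $\XX$ and that $\Sym\simeq\Sym\oplus\Sym$ (true for subsymmetric spaces), and what you need is $Q_\sigma(\Sym)\oplus\XX\simeq\XX$, which reduces to $Q_\sigma(\Sym)\oplus\Sym\simeq\Sym$. That, in turn, requires structural information about $Q_\sigma(\Sym)$ --- for instance $Q_\sigma(\Sym)\simeq Q_\sigma(\Sym)\oplus Q_\sigma(\Sym)$, or $Q_\sigma(\Sym)\simeq\Sym$ --- which is neither assumed nor proved, and is not automatic for a general subsymmetric $\Sym$; note also that $\XX\simeq\XX\oplus\XX$ is not among the hypotheses. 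The analogy with Corollary~\ref{cor:CPlp} breaks down precisely here: there the summand being absorbed was $\ell_p$ itself, complemented in $\XX$ and isomorphic to its square, whereas your summand is a kernel of an averaging projection, not $\Sym$. (Your route would survive in the special case $\Sym=\ell_p$, since then $Q_\sigma(\Sym)$ is an infinite-dimensional complemented subspace of $\ell_p$ and hence isomorphic to $\ell_p$ by Pe{\l}czy\'{n}ski's theorem; but the lemma is stated for general subsymmetric $\Sym$.)

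The paper's fix is small but essential: run the DKK construction over the Schauder basis $\BB\oplus\VB$ of $\XX\oplus P_\sigma(\Sym)$, where $\VB=(\vv_n)_{n=1}^\infty$ is the unconditional basis of $P_\sigma(\Sym)$; this basis is $q$-Hilbertian and $r$-Besselian because $\Sym$ satisfies an upper $q$-estimate and a lower $r$-estimate, so by Lemma~\ref{lem:gathered} and \eqref{eq:AUNCCDS} all the hypotheses you use (Hilbertian/Besselian properties of the base basis and the lower bound on its $\aunc$ parameters) persist. Then \cite{AADK2018}*{Theorem 3.6} yields
\[
\YY[\BB\oplus\VB,\Sym,\sigma]\simeq Q_\sigma(\Sym)\oplus\XX\oplus P_\sigma(\Sym)\simeq \XX\oplus\Sym\simeq\XX,
\]
where the last two isomorphisms use exactly the two facts you do have: $\Sym=P_\sigma(\Sym)\oplus Q_\sigma(\Sym)$ is complemented in $\XX$ and $\Sym$ is isomorphic to its square. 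With this modified base basis, the rest of your proof goes through as written.
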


\begin{proof}
The unconditional basis $\VB=(\vv_n)_{n=1}^\infty$ of $P_\sigma(\Sym)$ is $q$-Hilbertian and $r$-Besselian. Therefore, by Lemma~\ref{lem:gathered}, Lemma~\ref{lem:EstimatesRP}, Proposition~\ref{prop:DKKHilbertian} and Proposition~\ref{prop:DKKBesselian},
\[
d_{1,q}(\ww) \subseteq \YY:=\YY[\BB\oplus\VB, \Sym,\sigma] \subseteq d_{1,r}(\ww),
\]
provided that the ordered partition $\sigma=(\sigma_n)_{n=1}^\infty$ satisfies \eqref{eq:OPConditionA}. Choose $\sigma$ so that
\[
\log\left(\sum_{n=1}^m \abs{\sigma_n}\right) \lesssim m, \quad m\in\NN,
\]
also holds. Then, by \eqref{eq:AUNCCDS} and \cite{AADK2018}*{Proposition 3.8 and Theorem 3.17}, the unit vector system is an almost greedy basis of $\YY$ with fundamental function equivalent to $(\Lambda_m)_{m=1}^\infty$, and we have
\[
\aunc_m[\EB,\YY] \gtrsim (\log m)^{1/q-1/r}, \quad m\ge 2.
\]
Applying \cite{AADK2018}*{Theorem 3.6}, and using that $\Sym$ is isomorphic to its square, gives
\[
\YY\simeq Q_\sigma(\Sym) \oplus \XX \oplus P_\sigma(\Sym) \simeq \XX\oplus\Sym\simeq\XX.
\]
Combining Equation~\eqref{eq:domWLorentz}, Lemma~\ref{lem:equivalentnorm} and Corollary~\ref{cor:OptConditions} puts an end to the proof.
\end{proof}

Theorem~\ref{thm:AGSBlp}, of which Theorem~\ref{thm:CPAGlp}  is a simple consequence, lies within the line of research initiated by  Konyagin and Telmyakov \cite{KoTe1999} of finding conditional quasi-greedy bases in general Banach spaces. This topic has evolved towards the more specific quest of finding quasi-greedy bases with suitable conditionality parameters. The reader will find a detailed account of this process in the papers \cites{Woj2000,DKW2002,DKK2003,Gogyan2010, GHO2013, AAW2019,AADK2018,AAW2021b}.

\begin{theorem}\label{thm:AGSBlp}
Let $\XX$ be a Banach space with a Schauder basis $\BB$, and let $1<p<\infty$ be such that $\XX$ has a complemented subspace isomorphic to $\ell_p$. Let $1<q_0\le\min\{2,p\}$ and $\max\{2,p\}\le r_0<\infty$. Suppose  that $\BB$ is $q_0$-Hilbertian and  $r_0$-Besselian, and that $\unc_m[\XB,\XX] \lesssim m^{1/q_0-1/r_0}$ for $m\in\NN$. Then, $\XX$ has, for any $1<q\le q_0$ and $r_0\le r<\infty$,   an almost greedy basis $\XB$ such that
\begin{enumerate}[label=(\roman*),leftmargin=*,widest=iii]
\item $\udf[\XB,\XX](m) \approx m^{1/p}$ for $m\in\NN$,
\item $\unc_m[\XB,\XX] \approx (\log m)^{1/q-1/r}$ for $m\ge 2$,
\item $\XB$ is $\ell_{p,q}(\ww)$-Hilbertian and $\ell_{p,r}(\ww)$-Besselian, and
\item  $\XB$ is not $\ell_{p,q_1}$-Hilbertian for any $q_1>q$ nor $\ell_{p,r_1}$-Besselian for any $r_1<r$.
\end{enumerate}
\end{theorem}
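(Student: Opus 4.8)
The plan is to derive Theorem~\ref{thm:AGSBlp} from Lemma~\ref{lem:BB} by taking $\Sym=\ell_p$ and by assembling, from the hypotheses, a Schauder basis of a space isomorphic to $\XX$ that meets the requirements of that lemma. With $\Sym=\ell_p$ we have $\Lambda_m=m^{1/p}$, so any weight $\ww$ whose primitive sequence is equivalent to $(\Lambda_m)_{m=1}^\infty$ has primitive sequence $\approx m^{1/p}$. Since $(m^{1/p})_{m=1}^\infty$ has the LRP, Lemma~\ref{lem:equivalentnorm} together with the identity recorded before \eqref{eq:domWLorentz} gives $d_{1,s}(\ww)=\ell_{p,s}$ for all $0<s\le\infty$, up to an equivalent quasi-norm. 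Hence conclusions (i)--(iv) of Lemma~\ref{lem:BB}, read with $\Lambda_m=m^{1/p}$ and $d_{1,s}(\ww)=\ell_{p,s}$, translate exactly into (i)--(iv) of the theorem, and the whole matter reduces to exhibiting a basis that Lemma~\ref{lem:BB} can consume.

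To build it, I first apply Theorem~\ref{thm:AA}: because $1<q\le\min\{2,p\}\le 2\le\max\{2,p\}\le r<\infty$, there is a $q$-Hilbertian, $r$-Besselian Schauder basis $\XB_{q,r}$ of $\ell_2$ with $\aunc_m[\XB_{q,r},\ell_2]\approx m^{1/q-1/r}$. Setting $\mm=(2^j)_{j=1}^\infty$, I pass to $\XB_{q,r}^{(\mm)}$, regarded as a basis of $\big(\bigoplus_{n=1}^\infty \HH_{2^n}\big)_p\approx\ell_p$ via the Pe\l czy\'nski identity $(\bigoplus_n\ell_2^{2^n})_p\approx\ell_p$ already used in Corollaries~\ref{cor:CPlp} and \ref{cor:CPlpBis}. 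Since $q\le p\le r$, the unit vector system of $\ell_p$ is both $q$-Hilbertian and $r$-Besselian, so Lemma~\ref{lem:Infinite}\ref{InfiniteHB} keeps $\XB_{q,r}^{(\mm)}$ $q$-Hilbertian and $r$-Besselian; and since $\sum_{i<j}2^i\le 2^j$ and $m\mapsto m^{1/q-1/r}$ is doubling, Lemma~\ref{lem:Infinite}\ref{InfiniteAUNC} preserves $\aunc_m[\XB_{q,r}^{(\mm)}]\gtrsim m^{1/q-1/r}$.

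Next I glue this to the ambient basis. As $q\le q_0$ and $r\ge r_0$, the inclusions $\ell_q\subseteq\ell_{q_0}$ and $\ell_{r_0}\subseteq\ell_r$ upgrade the hypotheses on $\BB$ to: $\BB$ is $q$-Hilbertian and $r$-Besselian. Consequently, by Lemma~\ref{lem:gathered}(ii) the direct sum $\BB\oplus\XB_{q,r}^{(\mm)}$ is $q$-Hilbertian and $r$-Besselian, while \eqref{eq:AUNCCDS} gives $\aunc_m[\BB\oplus\XB_{q,r}^{(\mm)}]\ge\aunc_{\lfloor m/2\rfloor}[\XB_{q,r}^{(\mm)}]\gtrsim m^{1/q-1/r}$. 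This is a Schauder basis of $\XX\oplus\ell_p$, and $\XX\oplus\ell_p\simeq\XX$ because $\ell_p$ is complemented in $\XX$ and isomorphic to its square; in this model $\Sym=\ell_p$ is complemented. Feeding $\BB\oplus\XB_{q,r}^{(\mm)}$ and $\Sym=\ell_p$ into Lemma~\ref{lem:BB} produces the almost greedy basis $\XB$, and the dictionary of the first paragraph converts its four conclusions into (i)--(iv).

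The substantive difficulty is entirely absorbed into Lemma~\ref{lem:BB}, where the DKK machinery is responsible for the logarithmic conditionality and for pinning down the sharp Hilbertian and Besselian exponents. The only points needing care in the present argument are bookkeeping ones: verifying that the assembled basis simultaneously retains the precise exponents $q$ and $r$ and the full lower bound $\aunc_m\gtrsim m^{1/q-1/r}$, and confirming the weighted-Lorentz identification $d_{1,s}(\ww)=\ell_{p,s}$, which rests on the LRP of $(m^{1/p})_{m=1}^\infty$ so that Lemma~\ref{lem:equivalentnorm} applies.
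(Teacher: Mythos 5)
Your proposal is correct and takes essentially the same route as the paper: the paper's proof is literally ``combine Corollary~\ref{cor:CPlpBis} with Lemma~\ref{lem:BB}'', and your assembly of $\BB\oplus\XB_{q,r}^{(\mm)}$ via Theorem~\ref{thm:AA}, Lemma~\ref{lem:Infinite}, and the isomorphism $\XX\oplus\ell_p\simeq\XX$ is precisely the paper's own proof of Corollary~\ref{cor:CPlpBis}, after which you invoke Lemma~\ref{lem:BB} with $\Sym=\ell_p$ exactly as the paper does. The only cosmetic differences are that you inline that corollary (keeping just the lower $\aunc$ bound, which is all Lemma~\ref{lem:BB} needs) and that you spell out the identification $d_{1,s}(\ww)=\ell_{p,s}$, which the paper leaves implicit.
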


\begin{proof}
Just combine Corollary~\ref{cor:CPlpBis} with Lemma~\ref{lem:BB}.
\end{proof}

\begin{proof}[Proof of Theorem~\ref{thm:CPAGlp}]
We apply Theorem~\ref{thm:AGSBlp} in the case where $\XX$ is $\ell_p$ and $\BB$ is its unit vector system, so that we can choose $q_0=\min\{2,p\}$ and $r_0=\max\{2,p\}$. If $\alpha_0=1/q_0-1/r_0$,  then for any  $\alpha\in[\alpha_0,1)$
there are $1<q\le q_0$ and $r_0\le r <1$ such that $1/q-1/r=\alpha$. Since $\alpha_0=\abs{1/2-1/p}$, where are done.
\end{proof}

%\bibliography{Biblio}{}
%\bibliographystyle{plain}

% \bib, bibdiv, biblist are defined by the amsrefs package.
\begin{bibdiv}
\begin{biblist}

\bib{AABBL2021}{article}{
      author={Albiac, Fernando},
      author={Ansorena, Jos\'{e}~L.},
      author={Berasategui, Miguel},
      author={Bern\'{a}, Pablo~M.},
      author={Lassalle, Silvia},
       title={Bidemocratic bases and their connections with other greedy-type
  bases},
        date={2021},
     journal={arXiv e-prints},
      eprint={2105.15177},
}

\bib{AABW2021}{article}{
      author={Albiac, Fernando},
      author={Ansorena, Jos\'{e}~L.},
      author={Bern\'{a}, Pablo~M.},
      author={Wojtaszczyk, Przemys{\l}aw},
       title={Greedy approximation for biorthogonal systems in quasi-{B}anach
  spaces},
        date={2021},
     journal={Dissertationes Math. (Rozprawy Mat.)},
      volume={560},
       pages={1\ndash 88},
}

\bib{AADK2018}{article}{
      author={Albiac, Fernando},
      author={Ansorena, Jos\'e~L.},
      author={Dilworth, Stephen~J.},
      author={Kutzarova, Denka},
       title={Building highly conditional almost greedy and quasi-greedy bases
  in {B}anach spaces},
        date={2019},
        ISSN={0022-1236},
     journal={J. Funct. Anal.},
      volume={276},
      number={6},
       pages={1893\ndash 1924},
         url={https://doi-org/10.1016/j.jfa.2018.08.015},
      review={\MR{3912795}},
}

\bib{AAW2019}{article}{
      author={Albiac, Fernando},
      author={Ansorena, Jos\'{e}~L.},
      author={Wojtaszczyk, Przemys{\l}aw},
       title={Conditional quasi-greedy bases in non-superreflexive {B}anach
  spaces},
        date={2019},
        ISSN={0176-4276},
     journal={Constr. Approx.},
      volume={49},
      number={1},
       pages={103\ndash 122},
         url={https://doi-org/10.1007/s00365-017-9399-x},
      review={\MR{3895765}},
}

\bib{AAW2021b}{article}{
      author={Albiac, Fernando},
      author={Ansorena, Jos\'{e}~L.},
      author={Wojtaszczyk, Przemys{\l}aw},
       title={On certain subspaces of {$\ell_p$} for {$0<p\leq1$} and their
  applications to conditional quasi-greedy bases in {$p$}-{B}anach spaces},
        date={2021},
        ISSN={0025-5831},
     journal={Math. Ann.},
      volume={379},
      number={1-2},
       pages={465\ndash 502},
         url={https://doi-org/10.1007/s00208-020-02069-3},
      review={\MR{4211094}},
}

\bib{AlbiacKalton2016}{book}{
      author={Albiac, Fernando},
      author={Kalton, Nigel~J.},
       title={Topics in {B}anach space theory},
     edition={Second Edition},
      series={Graduate Texts in Mathematics},
   publisher={Springer, [Cham]},
        date={2016},
      volume={233},
        ISBN={978-3-319-31555-3; 978-3-319-31557-7},
         url={https://doi.org/10.1007/978-3-319-31557-7},
        note={With a foreword by Gilles Godefroy},
      review={\MR{3526021}},
}

\bib{Altman1949}{article}{
      author={Al{\cprime}tman, M.~\v{S}.},
       title={On bases in {H}ilbert space},
        date={1949},
     journal={Doklady Akad. Nauk SSSR (N.S.)},
      volume={69},
       pages={483\ndash 485},
      review={\MR{0033976}},
}

\bib{ABW2021}{article}{
      author={Ansorena, Jos\'{e}~L.},
      author={Bello, Glenier},
      author={Wojtaszczyk, Przemys{\l}aw},
       title={Lorentz spaces and embeddings induced by almost greedy bases in
  superreflexive {B}anach spaces},
        date={2021},
     journal={arXiv e-prints},
      eprint={2105.09203},
        note={Accepted for publication in Israel Journal of Mathematics},
}

\bib{Babenko1948}{article}{
      author={Babenko, Konstantin~I.},
       title={On conjugate functions},
        date={1948},
     journal={Doklady Akad. Nauk SSSR (N. S.)},
      volume={62},
       pages={157\ndash 160},
      review={\MR{0027093}},
}

\bib{BerLof1976}{book}{
      author={Bergh, J{\"o}ran},
      author={L{\"o}fstr{\"o}m, J{\"o}rgen},
       title={Interpolation spaces. {A}n introduction},
   publisher={Springer-Verlag, Berlin-New York},
        date={1976},
        note={Grundlehren der Mathematischen Wissenschaften, No. 223},
      review={\MR{0482275}},
}

\bib{DKK2003}{article}{
      author={Dilworth, Stephen~J.},
      author={Kalton, Nigel~J.},
      author={Kutzarova, Denka},
       title={On the existence of almost greedy bases in {B}anach spaces},
        date={2003},
        ISSN={0039-3223},
     journal={Studia Math.},
      volume={159},
      number={1},
       pages={67\ndash 101},
         url={https://doi.org/10.4064/sm159-1-4},
        note={Dedicated to Professor Aleksander Pe{\l}czy\'nski on the occasion
  of his 70th birthday},
      review={\MR{2030904}},
}

\bib{DKKT2003}{article}{
      author={Dilworth, Stephen~J.},
      author={Kalton, Nigel~J.},
      author={Kutzarova, Denka},
      author={Temlyakov, Vladimir~N.},
       title={The thresholding greedy algorithm, greedy bases, and duality},
        date={2003},
        ISSN={0176-4276},
     journal={Constr. Approx.},
      volume={19},
      number={4},
       pages={575\ndash 597},
         url={https://doi-org/10.1007/s00365-002-0525-y},
      review={\MR{1998906}},
}

\bib{DKW2002}{article}{
      author={Dilworth, Stephen~J.},
      author={Kutzarova, Denka},
      author={Wojtaszczyk, Przemys{\l}aw},
       title={On approximate {$l_1$} systems in {B}anach spaces},
        date={2002},
        ISSN={0021-9045},
     journal={J. Approx. Theory},
      volume={114},
      number={2},
       pages={214\ndash 241},
         url={https://doi.org/10.1006/jath.2001.3641},
      review={\MR{1883407}},
}

\bib{Duo2001}{book}{
      author={Duoandikoetxea, Javier},
       title={Fourier analysis},
      series={Graduate Studies in Mathematics},
   publisher={American Mathematical Society, Providence, RI},
        date={2001},
      volume={29},
        ISBN={0-8218-2172-5},
         url={https://doi-org/10.1090/gsm/029},
        note={Translated and revised from the 1995 Spanish original by David
  Cruz-Uribe},
      review={\MR{1800316}},
}

\bib{GHO2013}{article}{
      author={Garrig\'os, Gustavo},
      author={Hern\'{a}ndez, Eugenio},
      author={Oikhberg, Timur},
       title={{L}ebesgue-type inequalities for quasi-greedy bases},
        date={2013},
        ISSN={0176-4276},
     journal={Constr. Approx.},
      volume={38},
      number={3},
       pages={447\ndash 470},
         url={https://doi-org/10.1007/s00365-013-9209-z},
      review={\MR{3122278}},
}

\bib{GW2014}{article}{
      author={Garrig\'os, Gustavo},
      author={Wojtaszczyk, Przemys{\l}aw},
       title={Conditional quasi-greedy bases in {H}ilbert and {B}anach spaces},
        date={2014},
     journal={Indiana Univ. Math. J.},
      volume={63},
      number={4},
       pages={1017\ndash 1036},
}

\bib{Gelbaum1951}{article}{
      author={Gelbaum, Bernard},
       title={A nonabsolute basis for {H}ilbert space},
        date={1951},
        ISSN={0002-9939},
     journal={Proc. Amer. Math. Soc.},
      volume={2},
       pages={720\ndash 721},
         url={https://doi.org/10.2307/2032069},
      review={\MR{43383}},
}

\bib{Gogyan2010}{article}{
      author={Gogyan, Smbat},
       title={An example of an almost greedy basis in {$L^1(0,1)$}},
        date={2010},
        ISSN={0002-9939},
     journal={Proc. Amer. Math. Soc.},
      volume={138},
      number={4},
       pages={1425\ndash 1432},
         url={https://doi-org/10.1090/S0002-9939-09-10169-7},
      review={\MR{2578535}},
}

\bib{GurGur1971}{article}{
      author={Gurari\u{\i}, Vladimir~I.},
      author={Gurari\u{\i}, N.~I.},
       title={Bases in uniformly convex and uniformly smooth {B}anach spaces},
        date={1971},
        ISSN={0373-2436},
     journal={Izv. Akad. Nauk SSSR Ser. Mat.},
      volume={35},
       pages={210\ndash 215},
        note={English translation in \emph{Bases in uniformly convex and
  uniformly flattened {B}anach spaces} Math. USSR Izv. \textbf{220} (1971) no.
  5},
      review={\MR{0283549}},
}

\bib{HMW1973}{article}{
      author={Hunt, Richard},
      author={Muckenhoupt, Benjamin},
      author={Wheeden, Richard},
       title={Weighted norm inequalities for the conjugate function and
  {H}ilbert transform},
        date={1973},
        ISSN={0002-9947},
     journal={Trans. Amer. Math. Soc.},
      volume={176},
       pages={227\ndash 251},
         url={https://doi.org/10.2307/1996205},
      review={\MR{312139}},
}

\bib{James1972}{article}{
      author={James, Robert~C.},
       title={Super-reflexive spaces with bases},
        date={1972},
        ISSN={0030-8730},
     journal={Pacific J. Math.},
      volume={41},
       pages={409\ndash 419},
         url={http://projecteuclid.org/euclid.pjm/1102968287},
      review={\MR{308752}},
}

\bib{KoTe1999}{article}{
      author={Konyagin, Sergei~V.},
      author={Temlyakov, Vladimir~N.},
       title={A remark on greedy approximation in {B}anach spaces},
        date={1999},
        ISSN={1310-6236},
     journal={East J. Approx.},
      volume={5},
      number={3},
       pages={365\ndash 379},
      review={\MR{1716087}},
}

\bib{KotheToeplitz1934}{article}{
      author={K{\"o}the, Gottfried},
      author={Toeplitz, Otto},
       title={Lineare {R}{\"a}ume mit unendlich vielen {K}oordinaten und
  {R}inge unendlicher {M}atrizen},
        date={1934},
        ISSN={0075-4102},
     journal={J. Reine Angew. Math.},
      volume={171},
       pages={193\ndash 226},
         url={https://doi-org/10.1515/crll.1934.171.193},
      review={\MR{1581429}},
}

\bib{LinTza1977}{book}{
      author={Lindenstrauss, Joram},
      author={Tzafriri, Lior},
       title={Classical {B}anach spaces. {I} -- sequence spaces},
      series={Ergebnisse der Mathematik und ihrer Grenzgebiete [Results in
  Mathematics and Related Areas]},
   publisher={Springer-Verlag, Berlin-New York},
        date={1977},
        ISBN={3-540-08072-4},
      review={\MR{0500056}},
}

\bib{LinTza1979}{book}{
      author={Lindenstrauss, Joram},
      author={Tzafriri, Lior},
       title={Classical {B}anach spaces. {II} -- function spaces},
      series={Ergebnisse der Mathematik und ihrer Grenzgebiete [Results in
  Mathematics and Related Areas]},
   publisher={Springer-Verlag, Berlin-New York},
        date={1979},
      volume={97},
        ISBN={3-540-08888-1},
      review={\MR{540367}},
}

\bib{Pel1960}{article}{
      author={Pe{\l}czy\'{n}ski, Aleksander},
       title={Projections in certain {B}anach spaces},
        date={1960},
        ISSN={0039-3223},
     journal={Studia Math.},
      volume={19},
       pages={209\ndash 228},
         url={https://doi-org/10.4064/sm-19-2-209-228},
      review={\MR{126145}},
}

\bib{PelSin1964}{article}{
      author={Pe{\l}czy\'{n}ski, Aleksander},
      author={Singer, Ivan},
       title={On non-equivalent bases and conditional bases in {B}anach
  spaces},
        date={1964/65},
        ISSN={0039-3223},
     journal={Studia Math.},
      volume={25},
       pages={5\ndash 25},
         url={https://doi-org/10.4064/sm-25-1-5-25},
      review={\MR{179583}},
}

\bib{Woj2000}{article}{
      author={Wojtaszczyk, Przemys{\l}aw},
       title={Greedy algorithm for general biorthogonal systems},
        date={2000},
        ISSN={0021-9045},
     journal={J. Approx. Theory},
      volume={107},
      number={2},
       pages={293\ndash 314},
         url={https://doi-org/10.1006/jath.2000.3512},
      review={\MR{1806955}},
}

\end{biblist}
\end{bibdiv}

% ------------------------------------------------------------------------
\end{document}